\tikzset{cd/.style=matrix of math nodes,row sep=2em,column sep=2em, text height=1.5ex, text depth=0.5ex}
\tikzset{dar/.style={double,double equal sign distance,-implies}}
\tikzset{mid/.style={anchor=mid}} 
\tikzset{narrowfill/.style={inner sep=0pt, fill=white}}
\numberwithin{equation}{section}
\theoremstyle{plain}
\newtheorem{theorem}[equation]{Theorem}
\newtheorem*{theorem*}{Theorem}
\newtheorem{lemma}[equation]{Lemma}
\newtheorem*{lemma*}{Lemma}
\newtheorem{proposition}[equation]{Proposition}
\newtheorem*{proposition*}{Proposition}
\newtheorem{corollary}[equation]{Corollary}
\newtheorem*{corollary*}{Corollary}
\theoremstyle{definition}
\newtheorem{definition}[equation]{Definition}
\newtheorem*{definition*}{Definition}
\theoremstyle{remark}
\newtheorem{remark}[equation]{Remark}
\newtheorem{example}[equation]{Example}
\newtheorem{illustration}[equation]{Illustration}
\newcommand{\Contc}{\mathrm{C}_{\mathrm c}}
\newcommand{\Contz}{\mathrm{C}_0}
\newcommand*{\defeq}{\mathrel{\vcentcolon=}}
\newcommand{\R}{\mathbb R}
\newcommand{\C}{\mathbb C}
\newcommand*{\base}[1][H]{{#1}^{(0)}}
\newcommand{\nb}{\nobreakdash} 
\newcommand{\inverse}{^{-1}}
\newcommand{\CC}{\mathrm C}
\newcommand{\homeo}{\approx}
\newcommand{\iso}{\simeq}
\newcommand{\inpro}[2]{\left\langle#1 \mathbin, #2 \right\rangle}
\newcommand{\tcorr}{\frak{T}}
\newcommand{\ccorr}{\frak{C}}
\newcommand{\htens}{\mathbin{\hat\otimes}}
\newcommand*{\Bound}{\mathbb B}
\newcommand*{\dd}{\mathrm d}
\newcommand*{\Id}{\mathrm{Id}}
\newcommand*{\Cst}{\textup C^*}
\newcommand*{\Hils}[1][H]{\mathcal{#1}}
\newcommand{\Ltwo}{\mathcal L^2}
\newcommand{\bifunct}{\mathfrak{F}}
\title{The bicategory of topological correspondences} \author{Rohit
  Dilip Holkar} \email{rohit.d.holkar@gmail.com} \address{Department
  of Mathematics, Indian Institute of Science Education and Research
  Bhopal, Bhopal Bypass Road, Bhauri, Bhopal 462 066, Madhya Pradesh,
  India.}  \date{\today}
\begin{document}

\subjclass{22D25, 22A22, 47L30, 46L08, 58B30, 46L89, 18D05.}
\keywords{Topological correspondences, bicategory of topological
  correspondences, functoriality of topological correspondences.}
\thanks{This article was partially written during the authors stay as
  an NPDF in IISER Pune which was funded by DST-SERB India. Some of
  the work was also done during the visits to Bhaskaracharya
  Prathisthan Pune. The major part is finished in IISER Bhopal. The
  author is thankful to all these institutes and funding agencies.}

\maketitle{}

\begin{abstract}
  It is known that a topological correspondence \((X,\lambda)\) from a
  locally compact groupoid with a Haar system \((G,\alpha)\) to
  another one, \((H,\beta)\), produces a
  \(\textrm{C}^*\)-correspondence \(\mathcal{H}(X,\lambda)\) from
  \(\textrm{C}^*(G,\alpha)\) to \(\textrm{C}^*(H,\beta)\). In one of
  our earlier article we described composition two topological
  correspondences. In the present article, we prove that second
  countable locally compact Hausdorff topological groupoids with Haar
  systems form a bicategory \(\mathfrak{T}\) when equipped with a
  topological correspondences as 1-arrows. The equivariant
  homeomorphisms of topological correspondences preserving the
  families of measures are the 2-arrows in~\(\mathfrak{T}\).
  
  One the other hand, it well-known that \(\textrm{C}^*\)-algebras
  form a bicateogry \(\mathfrak{C}\) with
  \(\textrm{C}^*\)-correspondences as 1-arrows. The 2-arrows in
  \(\mathfrak{C}\) are unitaries of Hilbert \(\textrm{C}^*\)-modules
  that intertwine the representations. In this article, we show that a
  topological correspondence going to a \(\textrm{C}^*\)-one is a
  bifunctor~\(\mathfrak{T}\to\mathfrak{C}\).
\end{abstract}

\tableofcontents{}

\section{Introduction}

In~\cite{Holkar2017Construction-of-Corr}, we define topological
correspondences and show that a topological correspondence between
locally compact groupoids with Haar systems induce a
\(\Cst\)-correspondence between the groupoid \(\Cst\)\nb-algebras. In
that article, we also give many examples of topological
correspondences most of which are the analogues of the standard
examples of \(\Cst\)\nb-correspondences. These examples show that maps
of spaces (\cite{Holkar2017Construction-of-Corr}{Example~3.1}) and
group homomorphisms (\cite{Holkar2017Construction-of-Corr}*{3.4}) can
be seen as topological correspondences. In the next
article~\cite{Holkar2017Composition-of-Corr} of this series, we
describe how to compose topological correspondences. Examples~4.1
and~4.3 in~\cite{Holkar2017Composition-of-Corr} show that the
composition of space maps and composition of group homomorphisms agree
with the compositions of the topological correspondences associated
with them. Examples related to equivalence of groupoids,
transformation groupoids, induction correspondence are also discussed
these two articles.

Let \((G_i,\alpha_i)\), for \(i=1,2,3\), be locally compact groupoids
with Haar systems, and for \(i=1,2\) let \((X_i,\lambda_i)\) be
topological correspondences from \((G_i,\alpha_i)\) to
\((G_{i+1},\alpha_{i+1})\) with \(X_i\) Hausdorff. Let \(s_{X_1}\) and
\( r_{X_2}\) be the momentum maps for the action of \(G_2\) on \(X_1\)
and \(X_2\), respectively. To define composite of these
correspondences, one considers the proper diagonal action of \(G_2\)
on the fibre product \(X_1\times_{s_{X_1},\base[G_2],
  r_{X_2}}X_2\). Assume that the quotient space
\(X\defeq (X_1\times_{s_{X_1},\base[G_2], r_{X_2}}X_2)/G_2\) is
paracompact. Then we define \emph{a} composite of
\((X_{1},\lambda_{1})\) and \((X_{2},\lambda_{2})\) as a topological
correspondence \((X,\lambda)\colon (G_1,\alpha_1)\to (G_3,\alpha_3)\);
here the family of measures \(\lambda\) is obtained using
\(\lambda_1\) and \(\lambda_2\). Constructing \(\lambda\) is an
involved task, moreover, this family of measures \(\lambda\) is
\emph{not} unique. This family of measures depends on the choice of a
0\nb-cocyle on the transformation groupoid
\( (X_1\times_{s_{X_1},\base[G_2], r_{X_2}}X_2)\rtimes G_2\). The main
result in~\cite{Holkar2017Composition-of-Corr} shows that,
irrespective of choice of \(\lambda\), the \(\Cst\)\nb-correspondence
\(\Hils(X,\lambda)\colon \Cst(G_1,\alpha_2)\to \Cst(G_3,\alpha_3)\) is
isomorphic to the composite \(\Cst\)\nb-correspondence
\(\Hils(X_{2},\lambda_{2})\htens_{\Cst(G_2,\alpha_2)}\Hils(X_{3},\lambda_{3})\). In
present article, we show that the family of measures \(\lambda\) is
unique upto isomorphism of topological correspondences
(Proposition~\ref{prop:lifted-cocycles-are-iso}).  \medskip

A bicategory~\(\mathfrak{S}\) consists of objects, morphisms between
objects called 1\nb-arrows and morphisms between 1\nb-arrows are
called 2\nb-arrows. In literature, 1\nb-arrows are called
1\nb-morphism or simply morphism, and in accordance with this
terminology, a 2\nb-arrow is respectively called a 2\nb-morphism or
bigon. The objects and 1\nb-arrows form a category. Given objects
\(A,B\) in \(\mathfrak{S}\), the class of morphisms \(A\to B\) denoted
by \(\mathfrak{S}(A,B)\) forms a category in which the objects class
is \(\mathfrak{S}(A,B)\) itself and 2\nb-arrows are the
morphisms
. Each object \(A\) in \(\mathfrak{S}\) has the identity 1\nb-arrow
\(I_A\colon A\to A\), and every 1\nb-arrow \(A\xrightarrow{f}B\) has
the identity 2\nb-arrow \(i_{f}\) on it. These identity arrows fulfil
certain identity isomorphisms. The composition of 1\nb-arrows in
\(\mathfrak{S}\) is equipped with the associativity isomorphisms ---
this data is the data required for the a bicategory, and this data
satisfies some coherence conditions, for details
see~\ref{def:bicat}. Bicateogries are also referred to as weak
2\nb-categories.  One may demand that in a bicategory the identity and
associativity \emph{isomorphisms} are replaced by
\emph{equalities}. In this case, the bicategory is called a strict
2\nb-category.

Our main reference for bicategories is B{ \'e}nabou's
notes~\cite{Benabou1967Bicategories}; we follow the terminology and
convention he introduces. Unlike modern authors, B{\'e}nabou writes
arrows the other way round; therefore, so do we.

In~\cite{Buss-Meyer-Zhu2013Higher-twisted}, Buss, Meyer and Zhu define
weak action of a discrete group \(G\) on an object \(A\) in the weak
and twisted 2\nb-categories of \(\Cst\)\nb-algebras; denote these
2\nb-categories by \(\mathfrak{C}\) and \(\mathfrak{M}\),
respectively. And they show that these actions correspond to Fell
bundles over \(G\) and Busby-Smith twisted actions of the group \(G\)
on the \(\Cst\)\nb-algebra \(A\). They also show that the equivalence
of two weak actions in \(\mathfrak{M}\) is same as exterior
equivalence for the Busby-Smith twisted action
(\cite{Busby-Smith1970Rep-twisted-gp-alg}*{Def 2.4}). And that an
equivalence of two actions in \(\mathfrak{C}\) is same as equivalence
of Fell bundles. These result are valid even if \(G\) is a locally
compact group provided that the 2\nb-categories are enriched with
appropriate topological assumptions. This shows that this structural
descriptions of \(\Cst\)-algebras are useful.  \medskip

The interplay between topology or geometry, and operator algebras
---particularly \(\Cst\)\nb-algebras--- has been a topic of deep
interest for mathematicians. The subcategories of the category of
\(\Cst\)\nb-algebras consisting of unital (or non-unital) abelian
\(\Cst\)\nb-algebras equipped with \(*\)\nb-homomorphisms
(\(*\)\nb-homomorphisms into the multiplier algebras, respectively)
are classic classes of \(\Cst\)\nb-algebras which have natural
topological counterparts. We ask a question similar to these classical
questions, namely, if one equips \(\Cst\)\nb-algebras with
\(\Cst\)\nb-correspondences as morphisms, does the new structure have
a topological analogue? If it does, then what is it? How does the
\(\Cst\)\nb-functor behave with these structures?
In~\cite{Buss-Meyer-Zhu2013Higher-twisted}(Section 2.2), Buss, Meyer
and Zhu show that when equipped with \(\Cst\)\nb-correspondences as
morphisms, one may form weak (and strong) bicategory(ies) of
\(\Cst\)\nb-algebras. The 1\nb-arrows are the
\(\Cst\)\nb-correspondences, and the 2\nb-arrows are unitaries of
Hilbert modules intertwining the representations of left
\(\Cst\)\nb-algebras.  Needless to say that this weak bicategory of
\(\Cst\)\nb-algebras \emph{contains} most of the categories of
\(\Cst\)\nb-algebras. With this picture in mind, we look at the
examples at our disposal, which show that the composition of
topological correspondences is well-behaved with the categories of
spaces and groups. Therefore, it is very natural to ask
\begin{enumerate}[(i), leftmargin=*]
\item if topological correspondences yield a categorical structure?
\item If they do then is the assignment that a topological
  correspondence is assigned a \(\Cst\)\nb-one functorial for this
  structure?
\end{enumerate}
We answer both these questions in this article. The answer to the
first one is Theorem~\ref{thm:proof-of-bicategory-of-top-corr} which
states that topological correspondences form a bicategory. And the
answer to the second question is that the \(\Cst\)\nb-assignment is a
bifunctor from the (weak) bicategory of topological correspondences to
that of \(\Cst\)\nb-correspondences which Buss, Meyer and Zhu
describe~in~\cite{Buss-Meyer-Zhu2013Higher-twisted}. In the bicategory
of topological correspondences, the objects are locally compact second
countable groupoids equipped with Haar systems; the 1\nb-arrow are
topological correspondences between groupoids; and measures preserving
equivariant homeomorphisms of topological correspondences are the
2\nb-arrows. The \(\Cst\)\nb-functor maps a groupoid equipped with a
Haar system to its \(\Cst\)\nb-algebra, a topological correspondence
to a \(\Cst\)\nb-correspondence, and 2\nb-arrows to unitary
isomorphisms of \(\Cst\)\nb-correspondences.
 
Proving these fact is not straightforward: In general, composition of
topological correspondences is complicated. This complexity trickles
to the coherence conditions and associativity isomorphism. Therefore,
one has to take care of various minute details involved in the
composition of topological correspondences. The relation between the
invariant families of measures on a proper \(G\)\nb-space and the
families of measures quotient of this space is a key ingredient for
many proofs.  \smallskip

\noindent \emph{Structure of the article:} In the first section,
Section~\ref{sec:recap}, we revise the main results and techniques
from~\cite{Holkar2017Construction-of-Corr}
and~\cite{Holkar2017Composition-of-Corr}. The method of forming a
composite of topological correspondences which appear after
Example~\ref{exa:id-corr-iso} and continue till
Definition~\ref{def:composition} shall be used a lot and it will keep
appearing throughout the article. After that, we re-write the
definition of bicateogry and bifuctor from B{\'e}nabou's
notes~\cite{Benabou1967Bicategories}; this is to establish our
notation and definitions regarding bicategories.

In the second section, we first define an isomorphism of topological
correspondences (Section~\ref{sec:isom-topo-corr}), discuss some
examples of it and prove few useful technical lemmas. Then, in
Section~\ref{sec:bicat-topol-corr}, we define the bicategory of
topological correspondences and prove
Theorem~\ref{thm:proof-of-bicategory-of-top-corr}. In the last part,
in Section~\ref{sec:cst-bifunctor}, we prove
Theorem~\ref{thm:functoriality} which shows that the
\(\Cst\)\nb-assignment is a bifunctor.

We adopt the notation and conventions from the earlier articles in
this series. We believe that the many examples from the earlier two
articles suffice for this one as well. At the end, we give three
illustrations using Examples~3.1
in~\cite{Holkar2017Construction-of-Corr} and~4.1
in~\cite{Holkar2017Composition-of-Corr}, Examples~3.3
in~\cite{Holkar2017Construction-of-Corr} and~4.2
in~\cite{Holkar2017Composition-of-Corr}, and Examples~3.4
in~\cite{Holkar2017Construction-of-Corr} and~4.3
in~\cite{Holkar2017Composition-of-Corr}.

\section{Recap}
\label{sec:recap}

\subsection{Topological correspondences}

Let \(G\) be a groupoid. Then \(\base[G]\) denotes the set of units of
\(G\).  Let \(X\) be a left (or right) \(G\)\nb-space; we tacitly
assume that \(r_X\) (respectively, \(s_X\)) is the momentum map for
the action. The transformation groupoid for this action is denoted by
\(G\ltimes X\) (respectively, \(X\rtimes G\)). By \(r_G\) and \(s_G\)
we denote the range and the source maps of \(G\), respectively, which
are also the momentum maps for the left and right multiplicatin action
of \(G\) on itself. The fibre product \(G\times_{s_G,\base[G],r_X}X\)
of \(G\) and \(X\) over \(\base[G]\) along \(s_G\) and \(r_X\) is
denoted by \(G\times_{\base[G]}X\). If \(X\) is a right
\(G\)\nb-space, then \(X\times_{\base[G]}G\) has a similar meaning. If
\(X\) and \(Y\) are, respectively, left and right \(G\) spaces, then
we denote the fibre product \(X\times_{s_X,\base[G],r_Y}Y\) by
\(X\times_{\base[G]}Y\), that is,
\(X\times_{\base[G]}Y=\{(x,y)\in X\times Y:s_X(x)=r_Y(y)\}\).

Let \(A,B\) be \(\Cst\) algebras, \(\Hils\) a Hilbert \(B\)\nb-module
and \(\phi\colon A\to \Bound(\Hils)\) a nondegenerate representation
that makes \((\Hils,\phi)\) a \(\Cst\)\nb-correspondence from \(A\) to
\(B\). Then we simply call \(\Hils\) a \(\Cst\)\nb-correspondence from
\(A\) to \(B\). Let \(C\) be another \(\Cst\)\nb-algebra and
\(\Hils[K]\colon B\to C\) a \(\Cst\)\nb-correspondence. Then
\(\Hils\htens_B \Hils[K]\) is the interior tensor product of the
Hilbert modules; we may also write \(\Hils\htens \Hils[K]\) when the
middle \(\Cst\)\nb-algebra is clear.

By \(\R^*_+\) we denote the multiplicative group of positive real
numbers.

\begin{definition}[Topological correspondence
  (\cite{Holkar2017Construction-of-Corr} Definition 2.1)]
  \label{def:correspondence}
  A \emph{topological correspondence} from a locally compact groupoid
  \(G\) with a Haar system \(\alpha\) to a locally compact groupoid
  \(H\) equipped with a Haar system \(\beta\) is a pair
  \((X, \lambda)\), where:
  \begin{enumerate}[label={\roman*}),leftmargin=*] \item \(X\) is a
    locally compact \(G\)-\(H\)-bispace,
  \item the action of \(H\) is proper,
  \item \(\lambda = \{\lambda_u\}_{u\in\base}\) is an
    \(H\)\nb-invariant continuous family of measures along the
    momentum map \(s_X\colon X\to\base\),
  \item \(\Delta\) is a continuous function
    \(\Delta: G \ltimes X \rightarrow \R^+\) such that for each
    \(u \in \base\) and \(F\in \Contc(G\times_{\base[G]} X)\),
    \begin{multline*}
      \int_{X_{u}} \int_{G^{r_X(x)}} F(\gamma\inverse, x)\,
      \dd\alpha^{r_X(x)}(\gamma)\, \dd\lambda_{u}(x) \\= \int_{X_{u}}
      \int_{G^{r_X(x)}} F(\gamma, \gamma\inverse x)\, \Delta(\gamma,
      \gamma\inverse x) \, \dd\alpha^{r_X(x)}(\gamma)
      \,\dd\lambda_{u}(x).
    \end{multline*}
  \end{enumerate}
\end{definition}
The function \(\Delta\) is unique and is called \emph{the adjoining
  function} of the correspondence. The family of measures~\(\lambda\)
above is called an \(s_X\)\nb-system
in~\cite{Renault1985Representations-of-crossed-product-of-gpd-Cst-Alg}.

For \(\phi \in \Contc(G)\), \(f \in \Contc(X)\) and
\(\psi \in \Contc(H)\) define the functions \(\phi\cdot f\) and
\(f\cdot \psi\) on \(X\) as
follows: \begin{equation}\label{def:left-right-action}
  \left\{\begin{aligned} (\phi\cdot f)(x) &\defeq \int_{G^{r_X(x)}}
      \phi(\gamma) f(\gamma\inverse x) \,
      \Delta^{1/2}(\gamma, \gamma\inverse x) \; \dd \alpha^{r_X(x)}(\gamma),\\
      (f\cdot\psi )(x) &\defeq \int_{H^{s_X(x)}}
      f(x\eta)\psi({\eta}\inverse) \; \dd \beta^{s_X(x)}(\eta).
    \end{aligned}\right.
\end{equation}
For \(f, g \in \Contc(X)\) define the function
\(\langle f, g \rangle \) on \(H\) by
\begin{align}\label{def:inner-product}
  \langle f, g \rangle (\eta) &\defeq \int_{X_{r_H(\eta)}} \overline{f(x)} g(x\eta) \; \dd
                                \lambda_{r_H(\eta)}(x).
\end{align}
We often write \(\phi f\) and \(f\psi\) instead of \(\phi\cdot f\) and
\(f\cdot \psi\), respectively.  Lemma
in~\cite{Holkar2017Construction-of-Corr} shows that
\(\phi f, f\psi\in \Contc(X)\) and \(\inpro{f}{g}\in \Contc(H)\).

\begin{theorem}[\cite{Holkar2017Construction-of-Corr}*{Theorem
    2.10}]\label{thm:mcorr-gives-ccorr}
  Let \((G, \alpha)\) and \((H, \beta)\) be locally compact groupoids
  with Haar systems. Then a topological correspondence
  \((X, \lambda)\) from \((G, \alpha)\) to \((H, \beta)\) produces a\;
  \(\Cst\)\nb-correspondence \(\Hils(X,\lambda)\) from
  \(\Cst(G,\alpha)\) to \(\Cst(H,\beta)\).
\end{theorem}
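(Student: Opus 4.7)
The plan is to construct $\Hils(X,\lambda)$ by completing $\Contc(X)$ in the norm coming from the $\Contc(H)$-valued inner product~\eqref{def:inner-product}, and then to extend the prescribed left action of $\Contc(G)$ in~\eqref{def:left-right-action} to a nondegenerate $*$-representation of $\Cst(G,\alpha)$ by adjointable operators on the completion.

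The first block of checks is algebraic. I would verify that $\inpro{f}{g}\in\Contc(H)$, that $\inpro{\cdot}{\cdot}$ is sesquilinear, and the two compatibilities that actually require work: $\inpro{f}{g}^{*}=\inpro{g}{f}$, which reduces to the $H$\nb-invariance of $\lambda$ after substituting $x\mapsto x\eta\inverse$ in the integral, and $\inpro{f}{g\cdot\psi}=\inpro{f}{g}*\psi$, which follows by Fubini together with the left-invariance of $\beta$. Associativity of the right action, $(f\cdot\psi_{1})\cdot\psi_{2}=f\cdot(\psi_{1}*\psi_{2})$, is an immediate Fubini computation.

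The central analytic step is positivity: $\inpro{f}{f}\geq 0$ in $\Cst(H,\beta)$ for every $f\in\Contc(X)$. For any representation $\pi$ of $\Cst(H,\beta)$ on a Hilbert space $\Hils[K]$ coming from a quasi-invariant measure $\mu$ on $\base[H]$, I would disintegrate $\lambda$ and $\beta$ against $\mu$ and identify $\pi(\inpro{f}{f})$ with an operator of the form $T_{f}^{*}T_{f}$, where $T_{f}$ is essentially pointwise multiplication by $f$ between the two resulting $L^{2}$\nb-spaces. Because the regular representations form a faithful family on $\Cst(H,\beta)$, positivity follows. After quotienting out null vectors and completing, one obtains the Hilbert $\Cst(H,\beta)$\nb-module $\Hils(X,\lambda)$.

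Finally I would extend the left action. The key identity $\inpro{\phi\cdot f}{g}=\inpro{f}{\phi^{*}\cdot g}$, with $\phi^{*}(\gamma)=\overline{\phi(\gamma\inverse)}$, is precisely where the $\Delta^{1/2}$ factor in~\eqref{def:left-right-action} earns its keep: after a substitution $\gamma\mapsto\gamma\inverse$ inside an integral against $\lambda$, the defining equation of $\Delta$ in Definition~\ref{def:correspondence} supplies exactly the Radon--Nikodym factor needed for the two $\Delta^{1/2}$'s to combine symmetrically and the two sides to match. An $I$\nb-norm style estimate $\|\phi\cdot f\|\leq\|\phi\|_{I}\|f\|$ then yields boundedness, hence an extension to a $*$\nb-homomorphism $\Cst(G,\alpha)\to\Bound(\Hils(X,\lambda))$, and an approximate unit of bump functions supported near $\base[G]$ handles nondegeneracy. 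The main obstacle, I expect, is the positivity step: a purely combinatorial manipulation within $\Contc(H)$ is unenlightening, and the cleanest proofs route either through such a disintegration-of-measures realisation or through an embedding into a crossed-product convolution $*$\nb-algebra in which $\inpro{f}{f}$ is manifestly of the form $\tilde{f}^{*}*\tilde{f}$.
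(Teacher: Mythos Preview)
The paper does not prove this theorem: it is quoted verbatim from~\cite{Holkar2017Construction-of-Corr}*{Theorem~2.10} in the recap section, with no argument supplied here. So there is no proof in the present paper to compare your proposal against.

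That said, your outline is the standard construction and matches what the cited paper does. One point deserves tightening: you invoke ``regular representations form a faithful family on $\Cst(H,\beta)$'' to conclude positivity, but the regular representations are only guaranteed to be faithful on the \emph{reduced} groupoid $\Cst$\nb-algebra, not on the full one. The correct route for $\Cst(H,\beta)$ is Renault's disintegration theorem: every nondegenerate $*$\nb-representation of $\Contc(H)$ (for the inductive-limit topology) is the integrated form of a unitary representation associated with some quasi-invariant measure $\mu$ on $\base[H]$, so checking positivity in all such $\mu$\nb-induced representations really does cover every $\Cst$\nb-representation. You already set this up correctly in the sentence before, so the fix is just to drop the phrase about regular representations and cite disintegration instead. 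The rest of your sketch---the role of $H$\nb-invariance of $\lambda$ for $\inpro{f}{g}^{*}=\inpro{g}{f}$, and the $\Delta^{1/2}$ mechanism making $\phi\mapsto(\phi\cdot-)$ a $*$\nb-map---is exactly right.
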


\begin{example}[The identity topological correspondence]
  \label{exa:id-corr-iso}
  Let \((G,\alpha)\) be a locally compact topological groupoid with a
  Haar system. Then define the space \(X=G\). Then \(X\) is a free, as
  well as, proper \(G\)-\(G\)\nb-bispace with the left and right
  multiplication actions of \(G\) on itself. Example~3.7
  in~\cite{Holkar2017Construction-of-Corr} shows that \(G\) is a Macho
  Stadler--O'uchi correspondence from \(G\) to itself; in fact \(X\)
  is an equivalence of \(G\) with itself. What is the family of
  measures \(\lambda\) on \(X\) along the right momentum map \(s_G\)
  that makes it a topological correspondence in the sense of
  Definition~\ref{def:correspondence}? The discussion
  in~\cite{Holkar2017Construction-of-Corr}*{Example~3.7} shows that
  for \(k\in\Contc(X)\) and \(u\in\base[G]\)
  \[
    \int_X k\,\lambda_u\defeq \int_G k(\gamma\inverse
    x)\,\dd\alpha^{r_G(x)}(\gamma)
  \]
  where \(x\in X\) is any element with \(s_G(x)=u\). Since
  \(\lambda_u\) does not depend on \(x\in s_G\inverse(u)\), we choose
  \(x=u\in \base[G]\) which shows that
  \[
    \int_X k\,\lambda_u\defeq \int_G
    k(\gamma\inverse)\,\dd\alpha^{r_G(x)}(\gamma).
  \]
  Thus \(\lambda=\alpha\inverse\). The adjoining function for this
  correspondences is the constant function 1.

  Moreover, \(\Hils(X,\alpha\inverse)\) and \(\Cst(G,\alpha)\) are
  same \emph{as} Hilbert \(\Cst(G,\alpha)\)\nb-module, and the
  isomorphism is implemented by the identity map
  \(\Id_G\colon X\to G\). To see this, we firstly notice that
  \(\Contc(X,\beta\inverse)\) is a dense complex vector subspace of
  \(\Cst(G,\alpha)\), as well as, \(\Hils(X,\alpha\inverse)\). For
  \(f\in \Contc(X)\) and \(\psi\in\Contc(G)\),
  Equation~\ref{def:left-right-action} gives us
  \begin{equation}
    f\cdot \psi(x)=\int_G
    f(x\eta)\psi(\eta\inverse)\,\dd\alpha^{s_G(x)}(\eta) =
    f*\psi(x)\label{eq:id-corr-iso}
  \end{equation}
  where \(x\in X\), and \(f*\psi\) is the convolution of
  \(f,\psi\in\Contc(G)\subseteq \Cst(G,\alpha)\). If
  \(g\in \Contc(X)\) is another function and \(\eta\in G\), then
  Equation~\ref{def:inner-product} says
  \[
    \inpro{f}{g}(\gamma)=\int_G \overline{f(x)}
    g(x\gamma)\,\dd\alpha\inverse_{r_G(\gamma)}(x)
  \]
  which equals
  \[
    \int_G \overline{f(x\inverse)}
    g(x\inverse\gamma)\,\dd\alpha^{r_G(\gamma)}(x)=\int_G f^*(x)
    g(x\inverse\gamma)\,\dd\alpha^{r_G(\gamma)}(x)= f^**g(\eta)
  \]
  where \(f^*\) is the involution of
  \(f\in \Contc(G)\subseteq \Cst(G,\alpha)\) and \(f^**g\) denotes the
  convolution as earlier. From the construction of
  \(\Hils(X,\alpha\inverse)\) (proof of
  Theorem~\ref{thm:mcorr-gives-ccorr}), it is clear that
  \(\Hils(X,\alpha\inverse)=\Cst(G,\alpha)\) as Hilbert
  \(\Cst(G,\alpha)\)\nb-modules. Finally, as in
  Equation~\eqref{eq:id-corr-iso} above, one can show that
  \(\psi\cdot f=\psi* f\) which shows that \(\Hils(X,\alpha\inverse)\)
  and \(\Cst(G,\alpha)\) are \(\Cst\)\nb-correspondences on
  \(\Cst(G,\alpha)\).
\end{example}
\medskip

Let \((G_i,\chi_i)\) be a locally compact groupoid with a Haar system
for \(i=1,2,3\). Let \((X,\alpha)\colon (G_1,\chi_1)\to (G_2,\chi_2)\)
and \((Y,\beta)\colon (G_2,\chi_2)\to (G_3,\chi_3)\) be topological
correspondences, and let \(\Delta_1\) and \(\Delta_2\) be their
adjoining functions, respectively. Additionally, assume that \(X\) and
\(Y\) are Hausdorff, and \((X\times_{\base[G_2]}Y)/G_2\) is
paracompact. Recall from page~\pageref{sec:recap} that
\(X\times_{\base[G_2]}Y\) denotes the fibre product
\(X\times_{s_X,\base[G_2],r_Y}Y\). Now,
from~\cite{Holkar2017Composition-of-Corr}, we recall how to form the
composite \((Y,\beta)\circ(X,\alpha)\): We need to find a
\(G_1\)\nb-\(G_2\)-bispace \(\Omega\) (obtained using \(X\) and \(Y\))
and a family of measures \(\mu=\{\mu_u\}_{u\in\base[G]_3}\) (obtained
from \(\alpha\) and \(\beta\)) with the properties that (i)
\((\Omega,\mu)\) is a correspondence from \((G_1,\chi_1)\) to
\((G_3,\chi_2)\) and (ii) we have an isomorphism of
\(\Cst\)\nb-correspondences
\(\Hils(\Omega,\mu)\iso\Hils(X,\alpha)\htens_{\Cst(G_2,\chi_2)}\Hils(Y,\beta)\).
Reader may refer~\cite{Holkar2017Composition-of-Corr}*{Section 2.1}
or~\cite{Lance1995Hilbert-modules}*{Chapter 4} for details of interior
tensor product of Hilbert \(\Cst\)\nb-modules and composition of
\(\Cst\)\nb-correspondences.

Continuing the above discussion, let \(Z\) denote the fibre product
\(X\times_{\base[G_2]}Y\); then \(Z\) is a \(G_1\)-\(G_2\)\nb-bispace
with the obvious left and right actions. Let
\(s_Z\colon Z\to \base[G]_2\) be the map \(s_Z(x,y)=s_X(x)\),
equivalently, \(s_Z(x,y)=r_Y(y)\). The space \(Z\) carries the
diagonal action of \(G_2\), that is,
\((x,y)\gamma=(x\gamma,\gamma\inverse y )\) for
\((x,y,\gamma)\in Z\times_{\base[G_2]}G_2\); \(s_Z\) is the momentum
map for this action. Since the action of \(G_2\) on \(X\) is proper
(by hypothesis), so is that of \(G_2\) on \(Z\). We define
\(\Omega=Z/G_2\). the quotient space \(\Omega\) is a
\(G_1\)-\(G_3\)\nb-bispace with the actions induced from those on
\(Z\). Moreover, the right action of \(G_3\) in \(\Omega\) is proper
(\cite{Holkar2017Composition-of-Corr}*{Lemma 3.4}). This \(\Omega\) is
the desired space in the composite. Now we form the composite of the
families of measures on \(\Omega\).

\noindent (1)\label{page:compo-rev-1} Fix \(u\in\base[G_3]\). Define
the measure \(m_u\) on the space \(Z\) by
\[
  \int_{Z} f \;\dd m_u = \int_Y\int_X f(x,y) \;
  \dd\alpha_{r_Y(y)}(x)\; \dd\beta_{u}(y)
\]
where \(f\in \Contc(Z)\).

\noindent (2)\label{page:compo-rev-2} Let \(\pi\colon Z\to \Omega\) be
the quotient map. For \(f \in \Contc(Z)\) and
\(\omega=[x,y]\in\Omega\), let
\[
  \int_Z f\; \dd\lambda^{\omega} \defeq \int_{G_2^{r_Y(y)}} f(x\gamma,
  \gamma\inverse y) \; \dd\chi_2^{r_Y(y)}(\gamma);
\]
notice that \(\lambda^\omega\) is, in fact, defined over
\({\pi\inverse(\omega)}\subseteq Z\).  Then
\(\lambda=\{\lambda^\omega\}_{\omega\in \Omega}\) is a continuous
family of measures along \(\pi\).

\noindent (3) \label{page:compo-rev-3} It is well-known that the Haar
system \(\chi_2\) of \(G_2\) induce a Haar system \(\chi\) on the
transformation groupoid \(Z\rtimes G_2\): for
\(f\in \Contc(Z\rtimes G_2)\) and \(v\in Z\),
\[
  \int_{Z\rtimes G_2} f\,\dd\chi^v\defeq \int_{G_2} f(\gamma\inverse,
  v)\,\dd\chi_2^{s_Z(v)}(\gamma).
\]
Let \(\chi\inverse\) be the corresponding right invariant Haar system
on \(Z\rtimes G_2\), that is,
\(\int_{Z\rtimes G_2} f\,\dd\chi\inverse_v=\int_{Z\rtimes G_2}
f\circ\textup{inv}_{Z\rtimes G_2}\,\dd\chi^v\) for
\(f\in \Contc(Z\rtimes G_2)\) and \(v\in \base[(Z\rtimes G_2)]\). Here
\(\textup{inv}_G\colon G\to G\) is the homeomorphism
\(\textup{inv}_G(\gamma)=\gamma\inverse\) for \(\gamma\in G\).
Figure~\ref{fig:pushig-measure-down} shows the maps in (1)--(3) and
the families of measures along with them.
\begin{figure}[htb]
  \centering
  \[\begin{tikzcd}[column sep=huge,row sep=normal]
      Z\rtimes G_2 \arrow{r}{\chi\inverse}[swap]{s_{Z\rtimes G_2}}
      \arrow{d}[swap]{\chi}{r_{Z\rtimes G_2}}
      & Z \arrow{d}{\lambda}[swap]{\pi} \\
      Z \arrow{r}{\lambda}[swap] {\pi} & \Omega
    \end{tikzcd}\]
  \caption{}
  \label{fig:pushig-measure-down}
\end{figure}

The first part in the proof of Lemma~{3.6}
in~\cite{Holkar2017Composition-of-Corr} shows that, for each
\(u\in Z\), the measure \(m_u\) on \(Z=\base[Z\rtimes G_2]\) is
\((Z\rtimes G_2, \chi)\)\nb-quasi-invariant. That is, there is an
\(\R^*_+\)\nb-valued continuous 1\nb-cocycle~\(D\) (denoted by
\(\Delta\) in~\cite{Holkar2017Composition-of-Corr})
on~\(Z\rtimes G_2\) with the property that
\(m_u\circ\chi=D\, (m_u\circ\chi\inverse)\). The cocycle \(D\) is
given by
\begin{equation}
  \label{equ:composition-1-cocycle}
  \Delta\colon ((x,y),\gamma)\mapsto\Delta_2(\gamma\inverse,
  y).
\end{equation}

Recall from the discussion a few paragraphs above, that the action of
\(G_2\) on \(Z\) is proper, that is, the transformation groupoid
\(Z\rtimes G_2\) is proper. Proposition~{2.7}
in~\cite{Holkar2017Composition-of-Corr} says that every
\(\R\)\nb-valued 1\nb-cocycle on \(Z\rtimes G_2\) is a
coboundary. Using this, we get a family of 0\nb-cocycles
\(b=\{b_u\}_{u\in Z}\) such that
\(D=\frac{b_u\circ s_{Z\rtimes G_2}}{b_u\circ r_{Z\rtimes G_2}}\)
for each \(u\in Z\). Now~\cite{Holkar2017Composition-of-Corr}*{Lemma
  2.7} shows that \(b_u m_u\) is a \((Z\rtimes G_2,\chi)\)-symmetric
measure on its space of units, that is,
\((b_um_u)\circ\chi= (b_um_u)\circ\chi\inverse\). Fix \(u\in Z\).
Now~\cite{Holkar2017Composition-of-Corr}*{Proposition 3.1} says that
there is a measure \(\mu_u\) on \(\Omega\) which gives the
disintegration of measures \(b_um_u\circ\chi = \mu_u\). We write \(b\)
instead of \(b_u\). Then this measure is given by
\begin{equation}
  \label{eq:composite-measure}
  \int_\Omega f[x,y]\,\dd\mu_u'([x,y]) =\int_Y\int_X f\circ\pi(x,y) e(x,y)\, b(x,y)\, \dd\alpha_{r_Y(y)}(x)\, \dd\beta_u(y)
\end{equation}
for \(f\in \Contc(\Omega)\). In the above equation,
\(\pi\colon Z\to \Omega\) is the quotient map, \(e\colon Z\to \R^+\) a
cutoff function; see~Proposition~{3.1(ii)}
in~\cite{Holkar2017Composition-of-Corr}.

Finally,~\cite{Holkar2017Composition-of-Corr}*{Proposition 3.10} shows
that \(\mu=\{\mu_u\}_{u\in G_3}\) is a \(G_3\)\nb-invariant continuous
family of measures on \(\Omega\).
And~\cite{Holkar2017Composition-of-Corr}*{Proposition 3.12} shows that
each~\(\mu_u\) is \((G_1,\chi_1)\)\nb-quasi-invariant; the adjoining
function is
\[
  \Delta_{1,2}(\eta,[x,y])\defeq b(\eta x,y)\inverse \Delta_1(\eta,x)
  b(x,y)
\]
where \(\Delta_1\colon G_1\ltimes X\to \R_+^*\) is the adjoining
function of the topological correspondence \((X,\alpha)\).

\begin{definition}[Composite]
  \label{def:composition}
  Let
  \[
    (X, \alpha)\colon (G_1,\chi_1) \rightarrow (G_2,\chi_2)
    \quad\text{ and } \quad (Y, \beta)\colon (G_2,\chi_2) \rightarrow
    (G_3,\chi_3)
  \]
  be topological correspondences with \(\Delta_1\) and \(\Delta_2\) as
  the adjoining function, respectively. A composite of these
  correspondences
  \((\Omega, \mu):(G_1,\chi_1)\rightarrow (G_3,\chi_3)\) is defined
  by:
  \begin{enumerate}[label= {\roman*)},leftmargin=* ]
  \item the space \(\Omega \defeq (X\times_{\base[G_2]}Y)/ G_2\),
  \item a family of measures \(\mu = \{ \mu_u\}_{u\in{\base[G]}_3}\)
    that lifts to \(\{b(\alpha \times \beta_u)\}_{u\in{\base[G_3]}}\)
    on \(Z\) for a cochain
    \(b\in \CC^0_{G_3}((X\times_{\base[G_2]}Y)\rtimes G_2, \R_+^*)\)
    satisfying \(d^0(b)=D\) where
    \(D\colon (X\times_{\base[G_2]}Y)\rtimes G_2\to \R^*_+\) is
    \(D((x,y),\gamma)=\Delta_2(\gamma\inverse, y)\).
  \end{enumerate} \end{definition} \begin{theorem}[Theorem 3.14,
  \cite{Holkar2017Composition-of-Corr}]
  \label{thm:well-behaviour-of-composition}
  Let
  \[(X, \alpha)\colon (G_1,\chi_1) \rightarrow (G_2,\chi_2) \quad
    \text{and} \quad (Y, \beta) \colon (G_2,\chi_2) \rightarrow
    (G_3,\chi_3)\] be topological correspondences of locally compact
  groupoids with Haar systems. In addition, assume that \(X\) and
  \(Y\) are Hausdorff and second countable. Let
  \((\Omega, \mu)\colon (G_1,\chi_1)\rightarrow (G_3,\chi_3)\) be a
  composite of the correspondences. Then \(\Hils(\Omega,\mu) \) and
  \(\Hils(X,\alpha)\, \hat{\otimes}_{\Cst(G_2,\chi_2)}
  \Hils(Y,\beta)\) are isomorphic \(\Cst\)\nb-correspondences from
  \(\Cst(G_1,\chi_1)\) to \(\Cst(G_3,\chi_3)\). \end{theorem}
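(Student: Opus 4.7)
The plan is to exhibit a unitary isomorphism
\[\Phi\colon \Hils(X,\alpha)\htens_{\Cst(G_2,\chi_2)}\Hils(Y,\beta) \longrightarrow \Hils(\Omega,\mu)\]
of Hilbert $\Cst(G_3,\chi_3)$-modules that intertwines the left $\Cst(G_1,\chi_1)$-representations. I would define $\Phi$ first on the algebraic tensor product $\Contc(X)\odot\Contc(Y)$ by a formula of the shape
\[\Phi(f\tens g)([x,y]) \defeq \int_{G_2^{s_X(x)}} f(x\gamma)\, g(\gamma\inverse y)\, \Delta_2^{1/2}(\gamma\inverse,y)\, b(x\gamma,\gamma\inverse y)^{1/2}\, \dd\chi_2^{s_X(x)}(\gamma),\]
where $b$ is the $0$\nb-cochain on $(X\times_{\base[G_2]}Y)\rtimes G_2$ appearing in Definition~\ref{def:composition}. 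The first task is to check that the integrand really is a function of the orbit $[x,y]\in\Omega$: replacing the representative by $(x\eta,\eta\inverse y)$ and substituting $\gamma\mapsto\eta\gamma$, the mismatch is absorbed by combining the $1$\nb-cocycle identity for the adjoining function $\Delta_2$ with the coboundary relation satisfied by $b$, namely that its coboundary equals the cocycle $D((x,y),\gamma)=\Delta_2(\gamma\inverse,y)$ from~\eqref{equ:composition-1-cocycle}. Properness of the $G_2$\nb-action on $X\times_{\base[G_2]}Y$ then ensures $\Phi(f\tens g)\in\Contc(\Omega)$.

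Next I would verify the bimodule compatibility. The balancing identity $\Phi(f\cdot\phi \tens g)=\Phi(f\tens \phi\cdot g)$ for $\phi\in\Contc(G_2)$ is a direct Fubini manipulation on $G_2\times G_2$ using the left-invariance of $\chi_2$ and the defining formulas~\eqref{def:left-right-action} for the $G_2$\nb-actions on $\Contc(X)$ and $\Contc(Y)$; this lets $\Phi$ descend to the interior tensor product. Left $\Contc(G_1)$\nb-linearity and right $\Contc(G_3)$\nb-linearity of $\Phi$ follow by analogous Fubini computations, absorbing the $G_1$\nb-variable into $x$ and the $G_3$\nb-variable into $y$, and matching the result with the left and right actions on $\Contc(\Omega)$ from~\eqref{def:left-right-action}.

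The crux, and the main obstacle, is the inner-product identity
\[\inpro{\Phi(f_1\tens g_1)}{\Phi(f_2\tens g_2)}_{\Cst(G_3,\chi_3)}=\inpro{g_1}{\inpro{f_1}{f_2}\cdot g_2}_{\Cst(G_3,\chi_3)}.\]
Unfolding the left-hand side via the definition~\eqref{def:inner-product} on $\Hils(\Omega,\mu)$ and the disintegration formula~\eqref{eq:composite-measure} for $\mu_u$ produces an iterated integral over $X\times Y\times G_2\times G_2$ carrying one factor of the cutoff $e$, one factor of $b$, two factors of $b^{1/2}$ and two factors of $\Delta_2^{1/2}$. The coboundary relation between $b$ and $D$ combines the $b$\nb-factors with the $\Delta_2$\nb-factors \emph{exactly}, while the cutoff identity $\int_{G_2}e(x\gamma,\gamma\inverse y)\,\dd\chi_2^{s_X(x)}(\gamma)=1$ collapses the outer $G_2$\nb-integral. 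What remains is precisely the right-hand side, expanded through~\eqref{def:left-right-action} and~\eqref{def:inner-product} applied to the correspondences $(X,\alpha)$ and $(Y,\beta)$. Tracking the half-integer powers of $b$ and $\Delta_2$ so that the coboundary identity absorbs them on the nose is the delicate book\-keeping.

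Finally, since the image of $\Phi$ contains the pushforward of $\Contc(X\times_{\base[G_2]}Y)$ along the quotient map $\pi$, reweighted by $b^{1/2}$, and this subspace is dense in $\Contc(\Omega)$ in the inductive-limit topology and hence in $\Hils(\Omega,\mu)$, the map $\Phi$ extends by continuity to a surjective isometry of Hilbert $\Cst(G_3,\chi_3)$\nb-modules. Together with the intertwining property from the second paragraph, this is the desired isomorphism of $\Cst$\nb-correspondences.
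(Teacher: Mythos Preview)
Your overall architecture is exactly the one the paper (via \cite{Holkar2017Composition-of-Corr}) uses: define $\Phi$ on elementary tensors by a $G_2$--average, then check orbit well\nb-definedness, balancing over $\Contc(G_2)$, bimodule linearity, the inner\nb-product identity, and density of the range. So the approach is the same.

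However, the explicit formula you propose is not the right one, and with it the very first step already fails. The paper's map (recalled in Equation~\eqref{eq:def-nat-trans}) is
\[
\Phi(f\tens g)([x,y])=\int_{G_2} f(x\gamma)\,g(\gamma\inverse y)\,b^{-1/2}(x\gamma,\gamma\inverse y)\,\dd\chi_2^{s_X(x)}(\gamma),
\]
with \emph{no} $\Delta_2$-factor and with $b^{-1/2}$, not $b^{1/2}$. The point is that $b^{\pm 1/2}(x\gamma,\gamma\inverse y)$ is by itself invariant under replacing $(x,y)$ by $(x\eta,\eta\inverse y)$ together with the substitution $\gamma\mapsto\eta\gamma$, so no cocycle identity for $\Delta_2$ is needed here. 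Inserting your extra factor $\Delta_2^{1/2}(\gamma\inverse,y)$ spoils this: after the change of representative one picks up $\Delta_2^{1/2}(\eta,\eta\inverse y)\neq 1$, so the integrand no longer descends to $\Omega$. Furthermore, even dropping that factor, the sign of the exponent matters at the inner\nb-product step: unfolding via~\eqref{eq:composite-measure} contributes one factor of $b$ from the measure, and the two copies of $\Phi$ must contribute $b^{-1/2}\cdot b^{-1/2}$ so that altogether one is left with the bare cutoff $e$, whose $G_2$--integral is~$1$. With $b^{+1/2}$ you would instead get a stray $b^2$ and the identity $\inpro{\Phi(f_1\tens g_1)}{\Phi(f_2\tens g_2)}=\inpro{g_1}{\inpro{f_1}{f_2}\cdot g_2}$ does not come out. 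Once the formula is corrected, the rest of your outline goes through essentially verbatim and coincides with the paper's proof.
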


\subsection{Bicategory}

We follow B\'enabou's notation and terminology,
from~\cite{Benabou1967Bicategories}, for bicategories. B\'enabou's
convention for composition is the other way round than the standard
one. As Leinster shows in~\cite{Leinster1998Bicategories}, a
bicategory is biequivalent to a \(2\)\nb-category.

\begin{definition}[Bicategory, \cite{Benabou1967Bicategories}*{Definition 1.1}]\label{def:bicat}
  A bicategory \(\mathfrak{S}\) is determined by the following data:
  \begin{enumerate}[label=\roman*), leftmargin=*]
  \item a set \(\mathfrak{S}_0\) called set of objects or vertices;
  \item for each pair \((A,B)\) of objects, a category
    \(\mathfrak{S}(A,B)\);
  \item for each triple \((A,B,C)\) of objects of \(\mathfrak{S}\) a
    \emph{composition functor}
    \[
      c(A,B,C)\colon \mathfrak{S}(A,B) \times \mathfrak{S}(B,C)
      \rightarrow \mathfrak{S}(A,C) ;\]
  \item for each object \(A\) of \(\mathfrak{S}\) an object \(I_A\) of
    \(\mathfrak{S}(A,A)\) called \emph{identity arrow} of~\(A\) (the
    identity map of \(I_A\) in \(\mathfrak{S}(A,A)\) is denoted
    \(i_A\colon I_A\implies I_A\) and is called \emph{identity 2-cell}
    of \(A\));
  \item for each quadruple \((A,B,C,D)\) of objects of
    \(\mathfrak{S}\), a natural isomorphism \(a(A,B,C,D)\) called
    \emph{associativity isomorphism} between the two composite
    functors making the following diagram commute:
    \begin{center}
      \begin{tikzpicture}[scale=1]
        \draw[->] (1.7,0)--(5.2,0); \draw[->] (0,1.7)--(0,0.2);
        \draw[->](2.5,2)--(4.4,2); \draw[->] (5.8, 1.7)--(5.8,0.24);
        \draw[->, double] (1.8, 0.6)--(4.2,1.4); \node at (6, 0)[
        scale=1]{\(\mathfrak{S}(A,D)\)}; \node at (6.2, 2)[
        scale=1]{\(\mathfrak{S}(A,B)\times \mathfrak{S}(B,D)\)}; \node
        at (-0.2, 2)[
        scale=1]{\(\mathfrak{S}(A,B)\times \mathfrak{S}(B,C)\times
          \mathfrak{S}(C,D)\)}; \node at
        (2.7,1.1)[scale=0.8]{\(\sim\)}; \node at
        (3.8,0.8)[scale=0.8]{\(a(A,B,C,D)\)}; \node at (0, 0)[
        scale=1]{\(\mathfrak{S}(A,C)\times \mathfrak{S}(C,D)\)}; \node
        at (3.45, 2.25)[scale=0.8]{\(\textup{Id}\times c(B,C,D)\)};
        \node at (-1.1,
        1.0)[scale=0.8]{\( c(A,B,C)\times \textup{Id}\)}; \node at
        (3.1, -0.2)[scale=0.8]{\(c(A,C,D)\)}; \node at (6.6,
        1.0)[scale=0.8]{\(c(A,B,D)\)};
      \end{tikzpicture}
    \end{center}


  \item for each pair \((A,B)\) of objects of \(\mathfrak{S}\), two
    natural isomorphisms \(l(A,B)\) and \(r(A,B)\), called left and
    right identities such that the following diagrams commute:
    \begin{center}
      \begin{tikzpicture}[scale=1]
        \draw[->] (1.1,2)--(2.3,2); \draw[->] (0.2,1.8)--(1.7,0.3);
        \draw[->](3.8,1.8)--(2.3,0.3); \draw[->, double]
        (1.2,1.2)--(1.8,1.8); \node at
        (0,2)[scale=1]{\(1\times\mathfrak{S}(A,B)\)}; \node at
        (4,2)[scale=1]{\(\mathfrak{S}(A,A)\times \mathfrak{S}(A,B)\)};
        \node at (2,0){\(\mathfrak{S}(A,B)\)}; \node at
        (2,1.3)[scale=0.8]{\(l(A,B)\)}; \node at (1.7
        ,2.2)[scale=0.8]{\(\textup{I}_A\times\textup{Id}\)}; \node at
        (0.3, 0.9)[scale=0.8]{canonical}; \node at
        (1.3,1.0)[scale=0.8]{\(\sim\)}; \node at (4,
        1.1)[scale=0.8]{\(c(A,A,B)\)};
      \end{tikzpicture}
    \end{center}
    \begin{center}
      \begin{tikzpicture}[scale=1]
        \draw[->] (1,2)--(2.3,2); \draw[->] (0.3,1.8)--(1.6,0.3);
        \draw[->](3.8,1.8)--(2.3,0.3); \draw[->, double]
        (1.2,1.2)--(1.8,1.8); \node at
        (0,2)[scale=1]{\(\mathfrak{S}(A,B)\times 1\)}; \node at
        (4,2)[scale=1]{\(\mathfrak{S}(A,B)\times \mathfrak{S}(B,B)\)};
        \node at (2,0){\(\mathfrak{S}(A,B)\)}; \node at
        (2,1.3)[scale=0.8]{\(r(A,B)\)}; \node at (1.7
        ,2.2)[scale=0.8]{\(\textup{Id}\times\textup{I}_B\)}; \node at
        (0.3, 0.9)[scale=0.8]{canonical}; \node at
        (1.3,1.0)[scale=0.8]{\(\sim\)}; \node at (4,
        1.1)[scale=0.8]{\(c(A,B,B)\)};
      \end{tikzpicture}
    \end{center}

    This data satisfies the following conditions:
  \item \textit{associativity coherence}: If \((S,T,U,V)\) is an
    object of
    \(\mathfrak{S}(A,B)\times \mathfrak{S}(B,C)\times
    \mathfrak{S}(C,D)\times \mathfrak{S}(D,E)\), then the following
    diagram commutes:
    \begin{center}
      \begin{tikzpicture}[scale=0.9]
        \draw[dar] (0,1.8)--(0,0.2); \draw[dar] (0.1, -0.3)--(2,-1.5);
        \draw[dar] (5.8,-0.2)--(3.6,-1.4); \draw[dar] (5.8,
        1.8)--(5.8,0.24); \draw[dar](1.6,2)--(4.4,2);
        \node at (6, 0)[ scale=1]{\(S\circ((T\circ U)\circ V)\)};
        \node at (6, 2)[ scale=1]{\((S\circ (T\circ U))\circ V\)};
        \node at (0, 2)[ scale=1]{\(((S\circ T)\circ U)\circ V\)};
        \node at (3, -1.7)[scale=1]{\( S\circ(T\circ(U\circ V))\)};
        \node at ((-1.2, 1)[scale=0.8]{\(a(S\circ T, U, V)\)}; \node
        at (0, 0)[ scale=1]{\((S\circ T)\circ (U\circ V)\)}; \node at
        (3.0, 2.25)[scale=0.8]{\(a(S,T,U)\circ\textup{Id}_V\)}; \node
        at (-0.2, -1.0)[scale=0.8]{\(a(S,T, U\circ V)\)}; \node at
        (5.8, -1.0)[scale=0.8]{\(\textup{Id}_S\circ a(T, U, V)\)};
        \node at (7, 1.0)[scale=0.8]{\(a(S, T\circ U, V)\)};
      \end{tikzpicture}
    \end{center}

  \item \emph{identity coherence}: If \((S,T)\) is an object of
    \(\mathfrak{S}(A,B)\times \mathfrak{S}(B,C)\), then the following
    diagram commutes:
    \begin{center}
      \begin{tikzpicture}[scale=1]
        \draw[dar] (1,2)--(3,2); \draw[dar] (0.2,1.8)--(1.7,0.3);
        \draw[dar](3.8,1.8)--(2.3,0.3); \node at
        (0,2)[scale=1]{\((S\circ\textup{I}_B)\circ T\)}; \node at
        (4,2)[scale=1]{\(S\circ (\textup{I}_B\circ T)\)}; \node at
        (2,0){\(S\circ T\)}; \node at (1.9,
        2.2)[scale=0.8]{\(a(S, I_B, T)\)}; \node at (0.3,
        0.9)[scale=0.8]{\(r(S)\circ \textup{Id}_T\)}; \node at (4,
        1.1)[scale=0.8]{\(\textup{Id}_S\circ l(T)\)};
      \end{tikzpicture}
    \end{center}
  \end{enumerate}
\end{definition}

In modern literature, a vertex, an arrow (or a 1\nb-cell) and a 2-cell
are called an object, a 1\nb-arrow and a 2\nb-arrow, respectively. Let
\(A\) and \(B\) be two objects and let \(t,u\) be two arrows in the
category \(\mathfrak{S}(A,B)\). Then we call the rule of composition
of \(t\) and \(u\) in \(\mathfrak{S}(A,B)\) the vertical composition
of 1\nb-arrows. The composite functor \(c\) in (iii) above gives the
horizontal composition of 2\nb-arrows.  Let \((S,T)\) and \((S',T')\)
be two objects in \(\mathfrak{S}(A,B)\times\mathfrak{S}(B,C)\),
respectively, and let \(s\colon S\to S'\) and \(t\colon T\to T'\) be
2\nb-arrows. Then \(s\) and \(t\) induce a 2\nb-arrow
\(s\cdot_{h}t\colon S\circ T\to S'\circ T'\). The 2\nb-arrow
\(s\cdot_{h}t\) is called the vertical composite of the 2\nb-arrows
\(s\) and \(t\).

\begin{example}
  \label{exa:Cst-corr}
  In Section 2.2 of~\cite{Buss-Meyer-Zhu2013Higher-twisted} Buss,
  Meyer and Zhu form a bicategory of \(\Cst\)\nb-algebraic
  correspondences. In this bicategory the objects are the
  \(\Cst\)\nb-algebras, 1-arrows are the \(\Cst\)\nb-algebraic
  correspondences and 2-arrows are the equivariant unitary
  intertwiners of \(\Cst\)\nb-correspondences.
\end{example}

\begin{example}
  The \(\Cst\)\nb-correspondences of commutative (or commutative and
  unital) \(\Cst\)\nb-algebras is a sub-bicategory of the bicategory
  in Example~\ref{exa:Cst-corr}.
\end{example}

\begin{definition}[Morphisms of bicategories,
  \cite{Benabou1967Bicategories}*{Definition
    4.1}]\label{def:bifunctor}
  Let \(\mathfrak{S}\) and \(\mathfrak{S}'\) be bicategories. A
  morphism \(\mathfrak{V}=(V, v)\) from \(\mathfrak{S}\) to
  \(\mathfrak{S}'\) consists of:
  \begin{enumerate}[label=\roman*), leftmargin=*]
  \item a map \(V\colon \mathfrak{S}_0 \rightarrow\mathfrak{S}'_0\)
    sending an object \(A\) to \(V(A)\);
  \item a family of functors
    \(V(A,B)\colon \mathfrak{S}(A,B)\rightarrow \mathfrak{S}'(V(A),
    V(B))\) sending a 1-cell \(S\) to \(V(A)\) and a 2-cell \(s\) to
    \(V(s)\);
  \item for each object \(A\) of \(\mathfrak{S}\), a 2-cell
    \(v_A\in \mathfrak{S}(V(A),V(B))\)
    \[v_A\colon I_{V(A)}\Rightarrow V(I_A);\]
  \item a family of natural transformations
    \[
      v(A,B,C)\colon c(V(A),V(B),V(C))\circ(V(A,B)\times
      V(B,C))\rightarrow V(A,C)\circ c(A,B,C).
    \]
    If \((S,T)\) is an object of
    \(\mathfrak{S}(A,B)\times\mathfrak{S}'(B,C)\), the
    \((S,T)\)\nb-components of \(v(A,B,C)\)
    \[
      v(A,B,C)(S,T)\colon V(S)\circ V(T)\Rightarrow V(S\circ T)
    \]
    shall be abbreviated \(v\) or \(v(S,T)\).

    This data satisfies the following coherence conditions:
  \item If \((S,T,U)\) is an object of
    \(\mathfrak{S}(A,B)\times\mathfrak{S}(B,C)\times\mathfrak{S}(C,D)\)
    the diagram in Figure~\ref{fig:actual-pentagon} is commutative.

 \begin{figure}[ht]
   \centering
   \begin{tikzpicture}[scale=2]
     \draw[dar] (0,0.9)--(0,0.1);
     \draw[dar] (0,1.9)--(0,1.1);
     \draw[dar] (2.2,2)--(0.8,2); 
     \draw[dar] (3,1.9)--(3,1.1);
     \draw[dar] (3, 0.9)--(3,0.1);
     \draw[dar] (2.4,0)--(0.5, 0);
     \node at (0,0 )[scale=0.8]{\(V(S\circ(T\circ U))\)}; \node at
     (0,1 )[scale=0.8]{\(V(S)\circ V(T\circ U)\)}; \node at (0,2
     )[scale=0.8]{\(V(S)\circ(V(T)\circ V(U))\)}; \node at (3,2
     )[scale=0.8]{\((V(S)\circ V(T))\circ V(U)\)}; \node at
     (3,1)[scale=0.8]{\(V(S\circ T)\circ V(U)\)}; \node at (3,0
     )[scale=0.8]{\(V((S\circ T)\circ U)\)};
     \node at (-0.4,0.5 )[scale=0.8]{\(v(S,T\circ U)\)}; \node at
     (-0.55,1.5 )[scale=0.8]{\(\textup{Id}_{V(S)}\circ v(T,U)\)};
     \node at (1.5,2.1 )[scale=0.8]{\(a(V(S),V(T),V(U))\)}; \node at
     (1.5,1.9 )[scale=0.8]{\(\sim\)}; \node at (3.55,1.5
     )[scale=0.8]{\(v(S,T)\circ\textup{Id}_{V(U)}\)}; \node at
     (3.4,0.5 )[scale=0.8]{\(v(S\circ T, U)\)}; \node at (1.5,-0.1
     )[scale=0.8]{\(V(a(S, T, U))\)}; \node at (1.5,0.1
     )[scale=0.8]{\(\sim\)};
   \end{tikzpicture}
   \caption{\small Associativity coherence for a transformation
     between bicategories}
   \label{fig:actual-pentagon}
 \end{figure}

\item If \(S\) is an object of \(\mathfrak{S}(A,B)\) then the diagram
  in Figure~\ref{fig:right-identity-coherence}, for the right identity
  commutes.

  \begin{figure}[hbt]
    \[
      \begin{tikzcd}[scale=2]
        V(S)
        &V(S\circ \mathit{I}_B) \arrow[l,Rightarrow]{v_r}[swap]{\sim} \\
        V(S)\circ \textup{I}_{V(B)} \arrow[r,
        Rightarrow]{}[swap]{\textup{Id}\circ\phi_B}\arrow[u,Rightarrow]{r}[swap]{\sim}
        &V(S)\circ V(\textup{I}_B)
        \arrow[u,Rightarrow]{}[swap]{v(S,\textup{I}_B)}
      \end{tikzcd}
    \]
    \caption{\small Coherence of the right identity (and a similar
      diagram is drawn for the the left identity)}
    \label{fig:right-identity-coherence}
  \end{figure}
  A similar diagram for the left identity commutes.
\end{enumerate}
\end{definition}

\section{The bicategory of topological correspondences}

\subsection{Isomorphism of topoogical correspondences}
\label{sec:isom-topo-corr}

Let \(X\) be a space, and let \(\lambda\) and \(\lambda'\) be
equivalent Radon measures on it. Thus the Radon-Nikodym derivatives
\(\dd\lambda/\dd\lambda'\) and \(\dd\lambda'/\dd\lambda\) are,
respectively, \(\lambda\) and \(\lambda'\)-almost everywhere
positive. Moreover, the equality
\(\dd\lambda/\dd\lambda' \cdot \dd\lambda'/\dd\lambda=1\) holds
\(\lambda\) or\ \(\lambda'\)\nb-almost everywhere. Assume that \(Y\)
is another space and \(\pi\colon X\to Y\) is a homeomorphism. Then the
measure \(\lambda\colon \Contc(X)\to \C\) induces the measure
\(\pi_*(\lambda)\colon \Contc(Y)\to \C\) on \(Y\) as follows: for
\(f\in \Contc(Y)\), \(\pi_*(\lambda)(f)=\lambda(f\circ \pi)\). We call
\(\pi_*(\lambda)\) the push-forward (measure) of \(\lambda\).

\begin{definition}\label{def:equivalent-measure-families}
  Let \(\pi\colon X\to Y\) be an open surjection, and \(\lambda\) and
  \(\lambda'\) families of measures along \(\pi\). We call \(\lambda\)
  and \(\lambda'\) equivalent if,
  \begin{enumerate}[(i),leftmargin=*]
  \item for each \(y\in Y\), \(\lambda_y \) and \(\lambda'_y\) are
    equivalent,
  \item the function \({\dd\lambda}/{\dd\lambda'}\colon X\to \R\)
    given by
    \(\dd\lambda/\dd\lambda'(x)\mapsto
    \frac{\dd\lambda_{\pi(x)}}{\dd\lambda'_{\pi(x)}}(x)\) is
    continuous.
  \end{enumerate}
  In this case, we write \(\lambda\sim\lambda'\); we call the function
  \(\dd\lambda/\dd\lambda'\) the Radon-Nikodym derivative of
  \(\lambda\) with respect to \(\lambda'\). Note that in~{(ii)} above,
  \(\dd\lambda_{\pi(x)}/\dd\lambda'_{\pi(x)}\) is the Radon-Nikodym
  derivative of \(\lambda_{\pi(x)}\) with respect to
  \(\lambda'_{\pi(x)}\).
\end{definition}

Note that if, in Definition~\ref{def:equivalent-measure-families},
\(Y\) is singleton, then two measures \(\lambda\sim\lambda'\) on \(X\)
if the Radon-Nikodym derivative \({\dd\lambda}/{\dd\lambda'}\) is
continuous.  Also notice that in
Definition~\ref{def:equivalent-measure-families}, \emph{continuously
  equivalent} families of measures is a better terminology than
\emph{equivalent}. However, in our work, we shall not encounter any
instance of families of measures which are not continuously
equivalent. Therefore, we choose the present terminology. In~{(ii)} of
the the same definition, we are asking the not only the Radon-Nikodym
derivative \(\frac{\dd\lambda_{y}}{\dd\lambda'_{y}}\) is continuous
for \(y\in Y\) but also that the family of functions
\(\{{\dd\lambda_{\pi(x)}}/{\dd\lambda'_{\pi(x)}}\}_{x\in X}\) is
continuous \emph{in the direction} of \(X\); this transverse
continuity is used to prove
Proposition~\ref{prop:vertical-functoriality}.

Finally, in Definition~\ref{def:equivalent-measure-families}, for each
\(y\in Y\), the continuity of the Radon-Nikodym derivative
\(\dd\lambda_y/\dd\lambda'_{ y}\) implies that the function
\(\dd\lambda/\dd\lambda'>0\).  \medskip

Let \(X,Y\) and \(Z\) be spaces,
\(X\xrightarrow{\pi_X}Z\xleftarrow{\pi_Y}Y\) maps, and let \(\lambda\)
be a family of measures along \(\pi_X\). Let \(f\colon X\to Y\) be a
homeomorphism such that \(\pi_X=\pi_Y\circ f\). Then
\(\{f_*(\lambda_z)\}_{z\in Z}\) is continuous family of measures which
we denote by \(f_*(\lambda)\), thus we can write \(f_*(\lambda)_z\)
for \(f_*(\lambda_z)\).
\begin{lemma}
  \label{lem:equi-measures-image}
  Let \(X_1,X_2\) and \(Z\) be spaces, let \(\pi_i\colon X_i\to Z\) be
  maps for \(i=1,2\). Let \(a\colon X_1\to X_2\) be a homeomorphism
  such that \(\pi_1=\pi_2\circ a\). Assume that \(\lambda\) and
  \(\mu\) are equivalent family of measures along \(\pi_1\) with
  Radon-Nikodym derivative \(\dd\lambda/\dd\mu\). Then
  \(a_*(\lambda)\sim a_*(\mu)\) and the Radon-Nikodym derivative
  \(\dd a_*(\lambda)/\dd a_*(\mu)=\dd \lambda/\dd \mu\circ
  a\inverse\).
\end{lemma}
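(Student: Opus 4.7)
The plan is to verify the two conditions of Definition~\ref{def:equivalent-measure-families} by a direct change-of-variables computation, using nothing more than the fact that $a$ is a homeomorphism intertwining the projections.

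First I would fix $z \in Z$. By the relation $\pi_1 = \pi_2\circ a$, the homeomorphism $a$ restricts to a homeomorphism $\pi_1\inverse(z) \to \pi_2\inverse(z)$, so $a_*(\lambda)_z$ and $a_*(\mu)_z$ are Radon measures supported on $\pi_2\inverse(z)$. For any $g \in \Contc(X_2)$, the definition of push-forward together with the assumption $\lambda_z \sim \mu_z$ gives
\[
\int_{X_2} g\,\dd a_*(\lambda)_z = \int_{X_1}(g\circ a)\,\dd\lambda_z = \int_{X_1}(g\circ a)\cdot\frac{\dd\lambda_z}{\dd\mu_z}\,\dd\mu_z.
\]
Rewriting $(g\circ a)\cdot\frac{\dd\lambda_z}{\dd\mu_z} = \bigl(g\cdot\bigl(\tfrac{\dd\lambda_z}{\dd\mu_z}\circ a\inverse\bigr)\bigr)\circ a$ and applying the definition of push-forward once more, the last integral becomes
\[
\int_{X_2} g \cdot \left(\frac{\dd\lambda_z}{\dd\mu_z}\circ a\inverse\right)\,\dd a_*(\mu)_z.
\]
Since $g$ is arbitrary, this shows $a_*(\lambda)_z \sim a_*(\mu)_z$ with fibrewise Radon-Nikodym derivative $\frac{\dd\lambda_z}{\dd\mu_z}\circ a\inverse$ on $\pi_2\inverse(z)$. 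Assembling these fibrewise identities as $z$ ranges over $Z$ and using $\pi_1 = \pi_2\circ a$, one reads off the global identity $\dd a_*(\lambda)/\dd a_*(\mu) = \dd\lambda/\dd\mu \circ a\inverse$ as functions on $X_2$, which establishes the explicit formula claimed.

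For condition~(ii), continuity of this global Radon-Nikodym derivative is immediate: $\dd\lambda/\dd\mu$ is continuous on $X_1$ by hypothesis, and $a\inverse\colon X_2 \to X_1$ is continuous as the inverse of a homeomorphism, so the composite is continuous on $X_2$. There is no real obstacle here; the only point requiring a moment's care is keeping straight the passage between the fibrewise Radon-Nikodym derivatives and the global function $\dd\lambda/\dd\mu$, which is settled by the compatibility $\pi_1 = \pi_2 \circ a$.
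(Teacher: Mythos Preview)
Your proposal is correct and takes essentially the same approach as the paper, which simply records that the lemma ``follows from a direct computation.'' You have supplied exactly that computation in full, so nothing more is needed.
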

\begin{proof}
  Follows from a direct computation.
\end{proof}

\begin{lemma}
  \label{lem:equi-functoriality}
  Let \(i, X_i,Z_i,\pi_i\) and \(a_i\) be as in the succeeding lemma,
  Lemma~\ref{lem:chain-rule}. If~\(\lambda_1\) is a family of measures
  along \(\pi_1\), then
  \({(a_2\circ a_1)}_*(\lambda_1)={a_2}_*({a_1}_*(\lambda_1))\).
\end{lemma}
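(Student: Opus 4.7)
My plan is to prove the identity pointwise on the base, by evaluating both sides against an arbitrary test function $f\in\Contc(X_3)$ at an arbitrary point $z$ of the common base of the projections $\pi_i$. Since pushforward is defined fibrewise by $a_*(\lambda)_z(f)=\lambda_z(f\circ a)$, the whole statement should reduce to associativity of function composition: $f\circ(a_2\circ a_1)=(f\circ a_2)\circ a_1$.

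More concretely, I would first verify the hypothesis that both sides make sense as families of measures along the same map. The assumptions of Lemma~\ref{lem:chain-rule} (as invoked in the statement) guarantee $\pi_1=\pi_2\circ a_1$ and $\pi_2=\pi_3\circ a_2$, so $\pi_1=\pi_3\circ(a_2\circ a_1)$; hence $(a_2\circ a_1)_*(\lambda_1)$ is defined as a family along $\pi_3$, and so is ${a_2}_*({a_1}_*(\lambda_1))$ by two successive applications of the construction recalled just before Lemma~\ref{lem:equi-measures-image}.

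Then I would compute, for $f\in\Contc(X_3)$ and $z\in Z_3$:
\[
(a_2\circ a_1)_*(\lambda_1)_z(f)=\lambda_{1,z}\bigl(f\circ(a_2\circ a_1)\bigr)=\lambda_{1,z}\bigl((f\circ a_2)\circ a_1\bigr)={a_1}_*(\lambda_1)_z(f\circ a_2)={a_2}_*\bigl({a_1}_*(\lambda_1)\bigr)_z(f),
\]
which establishes the equality of the two families of measures.

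There is no real obstacle here; the lemma is a purely formal consequence of the definition of pushforward and associativity of composition, and it is presumably stated only to be invoked later (for instance to prove the associativity isomorphisms for $\mathfrak{T}$, where $a_1,a_2$ will be equivariant homeomorphisms of correspondences). The only minor point to mention is that continuity of the resulting family is inherited from that of $\lambda_1$ via the homeomorphisms $a_1$ and $a_2\circ a_1$, so both sides are genuinely continuous families of Radon measures.
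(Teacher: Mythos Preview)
Your proof is correct and is exactly the unpacking of the paper's one-line proof, which simply states that the claim ``follows directly from the definition of push-forward of a measure.'' Your computation via associativity of composition is precisely what that sentence means; the only minor remark is that in the setup of Lemma~\ref{lem:chain-rule} there is a single common base \(Z\) (the paper's ``\(Z_i\)'' in the statement is a typo), so your \(z\) should range over \(Z\) rather than \(Z_3\).
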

\begin{proof}
  This follows directly from the definition of push-forward of a
  measure.
\end{proof}
\begin{lemma}[Chain rule]
  \label{lem:chain-rule}
  Let \(X_1,X_2,X_3\) and \(Z\) be space. For \(i=1,2,3\), let
  \(\pi_i:X_i\rightarrow Z\) be maps and \(\lambda_i\) families of
  measures along \(\pi_i\). For \(i=1,2\), let
  \(a_i:X_i\rightarrow X_{i+1}\) be homeomorphisms such that
  \(\pi_i=\pi_{i+1}\circ a_i\). 
  If \({a_i}_*(\lambda_i)\) is equivalent to \(\lambda_{i+1}\) for
  \(i=1,2\), then \({(a_2\circ a_1)}_*(\lambda_1)\) is equivalent to
  \(\lambda_3\). Moreover, the Radon-Nikodym derivative satisfy the
  following relation
  \[
    \frac{\dd{(a_2\circ a_1)}_*(\lambda_1)}{\dd{\lambda_3}} =
    \frac{\dd {a_1}_*(\lambda_1)}{\dd \lambda_2}\circ a_2\inverse
    \cdot \frac{\dd {a_2}_*(\lambda_2)}{\dd{\lambda_3}}.
  \]
\end{lemma}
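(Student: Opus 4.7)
The plan is to derive the chain rule by composing the two previous lemmas together with the standard chain rule for Radon-Nikodym derivatives on a fibre.

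First I would invoke Lemma~\ref{lem:equi-functoriality} to rewrite $(a_2\circ a_1)_*(\lambda_1)$ as ${a_2}_*({a_1}_*(\lambda_1))$. This reduces the problem to analysing the single push-forward ${a_2}_*({a_1}_*(\lambda_1))$ relative to $\lambda_3$.

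Next, I would chain two equivalences. By hypothesis ${a_1}_*(\lambda_1)\sim \lambda_2$ as families along $\pi_2$, so Lemma~\ref{lem:equi-measures-image} (applied with the homeomorphism $a_2$ over $Z$) yields ${a_2}_*({a_1}_*(\lambda_1))\sim {a_2}_*(\lambda_2)$ with Radon-Nikodym derivative
\[
  \frac{\dd\, {a_2}_*({a_1}_*(\lambda_1))}{\dd\,{a_2}_*(\lambda_2)}
  = \frac{\dd\,{a_1}_*(\lambda_1)}{\dd\,\lambda_2}\circ a_2\inverse.
\]
By the second hypothesis, ${a_2}_*(\lambda_2)\sim \lambda_3$, with Radon-Nikodym derivative $\dd\,{a_2}_*(\lambda_2)/\dd\,\lambda_3$. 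Since equivalence of families of measures is transitive fibrewise (the product of two continuous positive functions is continuous), we obtain ${a_2}_*({a_1}_*(\lambda_1))\sim \lambda_3$.

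Finally, to identify the Radon-Nikodym derivative, I would apply the classical chain rule for Radon-Nikodym derivatives on each fibre $\pi_3\inverse(z)$: for $\mu_1\sim\mu_2\sim\mu_3$ one has $\dd\mu_1/\dd\mu_3 = (\dd\mu_1/\dd\mu_2)\cdot (\dd\mu_2/\dd\mu_3)$ almost everywhere, and in our setting both factors are genuinely continuous by the remarks after Definition~\ref{def:equivalent-measure-families}, so the equality holds everywhere. Combined with the formula from the previous step this yields exactly
\[
  \frac{\dd\,(a_2\circ a_1)_*(\lambda_1)}{\dd\,\lambda_3}
  = \frac{\dd\,{a_1}_*(\lambda_1)}{\dd\,\lambda_2}\circ a_2\inverse
    \cdot \frac{\dd\,{a_2}_*(\lambda_2)}{\dd\,\lambda_3}.
\]
No step here is really an obstacle; the mildest subtlety is simply keeping track of which map the push-forward is taken along and verifying that the continuity of the transverse Radon-Nikodym derivative is preserved under composition, but this is immediate from the product rule together with continuity of $a_2\inverse$.
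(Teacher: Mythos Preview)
Your proposal is correct and arrives at the same result by the same underlying mechanism as the paper. The paper's proof is a single direct computation: it starts with \(\int_{X_3} f\cdot(\text{proposed derivative})\,\dd{\lambda_3}_z\), absorbs the factor \(\dd{a_2}_*(\lambda_2)/\dd\lambda_3\) to pass to \({a_2}_*({\lambda_2}_z)\), changes variable along \(a_2\), absorbs the remaining factor to pass to \({a_1}_*({\lambda_1}_z)\), and changes variable along \(a_1\). You do exactly the same thing, only packaged through the two preceding lemmas (Lemma~\ref{lem:equi-functoriality} for \((a_2\circ a_1)_*={a_2}_*\circ{a_1}_*\) and Lemma~\ref{lem:equi-measures-image} for the transport of the Radon--Nikodym derivative under \(a_2\)) followed by fibrewise transitivity. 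The modular presentation is arguably cleaner, but there is no substantive difference in content.
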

\begin{proof}
  This is a straightforward computation: For \(z\in Z\) and
  \(f\in \Contc(X_3)\),
  \begin{align*}
    &\int_{X_3} f(x) \frac{\dd{a_1}_*({\lambda_1}_z) }{\dd{\lambda_2}_z}\circ a_2\inverse(x) \frac{\dd {a_2}_*({\lambda_2}_z)}{\dd{\lambda_3}_z}(x)\, \dd{\lambda_3}_z(x)\\
    &=\int_{X_2} f\circ a_2 (y)\, \frac{\dd{a_1}_*({\lambda_1}_z)}{\dd{\lambda_2}_z}(y) \; \dd{\lambda_2}_z(y)
      = \int_{X_1} f\circ a_2\circ a_1(w) \,\dd{\lambda_1}_z(w)\\
    &=\int_{X_3} f(x)\,\dd{(a_2\circ a_1)}_*(\lambda_1)(x). \qedhere
  \end{align*}
\end{proof}

\begin{corollary}[Of the chain rule]
  \label{cor:iso-reflexivity}
  Let \(X_i,\pi_i,\lambda_i\), for \(i=1,2\),\(Z\) and \(a_1\) be as
  in Lemma~\ref{lem:chain-rule} above. If
  \({a_1}_*(\lambda_1)\sim \lambda_2\), then
  \(\lambda_1\sim{a_1}_*\inverse(\lambda_2)\).
\end{corollary}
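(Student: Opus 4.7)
The plan is to deduce this corollary as a direct consequence of the two preceding Lemmas~\ref{lem:equi-measures-image} and~\ref{lem:equi-functoriality}, applied to the inverse homeomorphism $a_1\inverse\colon X_2\to X_1$. There is really no serious obstacle; the only thing to verify is that the push-forward operation behaves well enough for the identity ``$a_1\inverse{}_*\circ {a_1}_*=\mathrm{Id}_*$'' to be available.

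First I would observe that since $a_1\colon X_1\to X_2$ is a homeomorphism satisfying $\pi_1=\pi_2\circ a_1$, its inverse $a_1\inverse\colon X_2\to X_1$ is a homeomorphism satisfying $\pi_2=\pi_1\circ a_1\inverse$. Hence $a_1\inverse{}_*(\lambda_2)$ is a well-defined family of measures along $\pi_1$, and the statement of the corollary makes sense.

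Next, starting from the hypothesis $a_{1*}(\lambda_1)\sim\lambda_2$ (both families of measures along $\pi_2$), I would apply Lemma~\ref{lem:equi-measures-image} to the homeomorphism $a_1\inverse$ (taking the roles of the symbols there to be $X_1\leftarrow X_2$, $X_2\leftarrow X_1$, and the pair of equivalent families to be ${a_1}_*(\lambda_1)$ and $\lambda_2$). This gives
\[
  a_1\inverse{}_*({a_1}_*(\lambda_1))\;\sim\;a_1\inverse{}_*(\lambda_2).
\]
Then I would invoke Lemma~\ref{lem:equi-functoriality} to rewrite the left-hand side as $(a_1\inverse\circ a_1)_*(\lambda_1)=\mathrm{Id}_*(\lambda_1)=\lambda_1$, and conclude $\lambda_1\sim a_1\inverse{}_*(\lambda_2)$, as required. (If desired, Lemma~\ref{lem:equi-measures-image} even tells us that the Radon--Nikodym derivative is $\dd a_1\inverse{}_*(\lambda_2)/\dd\lambda_1=(\dd\lambda_2/\dd {a_1}_*(\lambda_1))\circ a_1$, but the corollary as stated only claims equivalence.)

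The hardest step, such as it is, is purely bookkeeping: one must correctly track which family of measures lives along which map, and check that Definition~\ref{def:equivalent-measure-families} is symmetric in its two arguments so that equivalence can indeed be pushed through $a_1\inverse$ in either order. Both points are immediate from the definition and from the remark in the paragraph following Definition~\ref{def:equivalent-measure-families} that $\dd\lambda/\dd\lambda'$ is everywhere positive, which gives the symmetric relation $\lambda'\sim\lambda$ with reciprocal Radon--Nikodym derivative.
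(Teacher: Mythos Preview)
Your proof is correct, and it takes a genuinely different route from the paper's. The paper applies the chain rule (Lemma~\ref{lem:chain-rule}) to the composite \(X_1\xrightarrow{a_1}X_2\xrightarrow{a_1\inverse}X_1\), obtaining
\[
  1=\frac{\dd{a_1}_*(\lambda_1)}{\dd\lambda_2}\circ a_1
  \cdot\frac{\dd{a_1}\inverse_*(\lambda_2)}{\dd\lambda_1},
\]
and then solves for the second factor to see it is positive and continuous. Your approach instead pushes the equivalence \({a_1}_*(\lambda_1)\sim\lambda_2\) forward through \(a_1\inverse\) via Lemma~\ref{lem:equi-measures-image}, and then collapses \({a_1\inverse}_*\circ{a_1}_*\) using Lemma~\ref{lem:equi-functoriality}. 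Your argument is arguably cleaner: the paper's invocation of Lemma~\ref{lem:chain-rule} formally requires as a hypothesis the very equivalence \({a_1\inverse}_*(\lambda_2)\sim\lambda_1\) being proved (the lemma assumes \emph{both} \({a_i}_*(\lambda_i)\sim\lambda_{i+1}\)), so the paper is really using the chain-rule \emph{formula} to identify the unknown derivative rather than the lemma as stated. Your route sidesteps this subtlety entirely.
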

\begin{proof}
  Apply the chain rule (Lemma~\ref{lem:chain-rule}) to
  \(X_1\xrightarrow{a_1} X_2 \xrightarrow{a_1\inverse} X_1\) to get
  \[
    1=\frac{\dd{a_1}_*(\lambda_1)}{\dd\lambda_2}\circ a_1
    \cdot\frac{\dd{a_1}\inverse_*(\lambda_2)}{\dd\lambda_1}.
  \]
  Since \(\frac{\dd{a_1}_*(\lambda_1)}{\dd\lambda_2}\) is positive
  continuous and \(a_1\) is a homeomorphism,
  \(\frac{\dd{a_1}_*(\lambda_1)}{\dd\lambda_2}\circ a_1\) is also
  positive continuous function. Thus
  \({\dd{a_1}\inverse_*(\lambda_2)}/{\dd\lambda_1}>0\) and continuous,
  that is, \(\dd{a_1}\inverse_*(\lambda_2)\sim {\dd\lambda_1}\).
\end{proof}

Following is a lemma that will prove useful in many computations
later.
\begin{lemma}
  \label{lem:equi-measures-on-quotient}
  Let \((G,\alpha)\) be a groupoid equipped with a Haar system. Let
  \(m\) and \(m'\) be \(G\)\nb-invariant equivalent measures on
  \(\base[G]\) with continuous Radon-Nikodym derivative
  \(\dd m/\dd m'\). Then the following hold.
  \begin{enumerate}[(i)]
  \item On \(G\), \(m\circ \alpha\sim m'\circ \alpha\) and
    \(m\circ \alpha\inverse\sim m'\circ \alpha\inverse\). Moreover,
    the Radon-Nikodym derivatives
    \[
      \frac{\dd m\circ \alpha\inverse}{\dd m'\circ \alpha\inverse} =
      \frac{\dd m}{\dd m'} \circ s_G \quad\text{and}\quad \frac{\dd
        m\circ\alpha}{\dd m'\circ \alpha} = \frac{\dd m}{\dd m'} \circ
      r_G.
    \]
  \item the Radon-Nikodym derivative \(\dd m/\dd m'\) is
    \(G\)\nb-invariant.
  \end{enumerate}
  Additionally, assume that \(G\) is proper, \(\base[G]/G\)
  paracompact. Let \(q\colon \base[G]\to \base[G]/G\) be the quotient
  map. Let \(\mu\) and \(\mu'\) be the measures on \(\base[G]/G\)
  which give the disintegration \( \mu\circ [\alpha]=m\) and
  \(\mu'\circ[\alpha]=m'\). Let \([\dd m/\dd m']\) denote the function
  which \(\dd m/\dd m'\) induce on \(\base[G]/G\) (cf.\,(ii)
  above). Then
  \begin{enumerate}[(i), resume]
  \item \(\mu\sim\mu'\) and the Radon-Nikodym derivative
    \(\dd \mu/\dd \mu' =[\dd m/\dd m]\).
  \end{enumerate}
\end{lemma}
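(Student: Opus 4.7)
The three parts split cleanly, so the plan is to handle them in order, each time reducing to a direct computation against $\Contc$-functions and the defining property of the relevant Radon-Nikodym derivative.

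For part (i), I would simply unfold both sides against an $f\in \Contc(G)$. On the one hand,
\[
\int_G f\,\dd(m\circ\alpha)=\int_{\base[G]}\!\int_{G^{u}} f(\gamma)\,\dd\alpha^{u}(\gamma)\,\dd m(u),
\]
and replacing $\dd m$ by $(\dd m/\dd m')\,\dd m'$ lets me pull the factor $(\dd m/\dd m')(u)=(\dd m/\dd m')(r_G(\gamma))$ inside the inner integral, since $\gamma\in G^{u}$ forces $r_G(\gamma)=u$. Fubini then re-assembles the right hand side into an integral against $m'\circ\alpha$, identifying the Radon-Nikodym derivative as $\dd m/\dd m'\circ r_G$. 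The $\alpha\inverse$ statement is the symmetric computation using $s_G$ in place of $r_G$.

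Part (ii) is a short consequence of part (i) combined with $G$-invariance. By definition $G$-invariance says $m\circ\alpha=m\circ\alpha\inverse$ and $m'\circ\alpha=m'\circ\alpha\inverse$. Plugging (i) into both sides of $m\circ\alpha=m\circ\alpha\inverse$ and using the fact that $m'\circ\alpha=m'\circ\alpha\inverse$ is a common nonzero measure, I get the equality $\dd m/\dd m'\circ r_G=\dd m/\dd m'\circ s_G$ almost everywhere with respect to $m'\circ\alpha$; continuity of the Radon-Nikodym derivative (Definition~\ref{def:equivalent-measure-families}) upgrades this to equality on all of $G$, which is precisely $G$-invariance of $\dd m/\dd m'$.

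For part (iii), I would test the disintegration against test functions of a convenient form. Since properness of $G$ gives a cutoff function $e\colon\base[G]\to \R^{+}$ with $\int e\,\dd[\alpha]_{[u]}=1$, the functions of the form $(g\circ q)\cdot e$ with $g\in\Contc(\base[G]/G)$ satisfy $\int (g\circ q)\cdot e\,\dd[\alpha]_{[u]}=g([u])$. Applying $\mu\circ[\alpha]=m$ and then $m=(\dd m/\dd m')\cdot m'$ together with the disintegration $\mu'\circ[\alpha]=m'$, and using part~(ii) to pull the (now $G$-invariant, hence orbit-constant) function $\dd m/\dd m'$ out of the inner integral as its quotient $[\dd m/\dd m']([u])$, produces
\[
\int_{\base[G]/G} g\,\dd\mu=\int_{\base[G]/G} g\cdot[\dd m/\dd m']\,\dd\mu'.
\]
Since $g\in\Contc(\base[G]/G)$ is arbitrary this identifies $\dd\mu/\dd\mu'=[\dd m/\dd m']$, which is continuous because $\dd m/\dd m'$ is and $q$ is an open surjection. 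The only delicate point, and the one I would be most careful about, is verifying that the induced function $[\dd m/\dd m']$ is genuinely continuous on $\base[G]/G$; this uses that $q$ is an open quotient by a proper group action together with the $G$-invariance established in~(ii).
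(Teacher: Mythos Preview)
Your proposal is correct and follows essentially the same approach as the paper: unfolding against $\Contc$-functions for~(i), deducing~(ii) from the equality $m\circ\alpha=m\circ\alpha\inverse$ together with~(i), and using a cutoff function and the formula $\mu(g)=m((g\circ q)\cdot e)$ for~(iii). The only differences are cosmetic (the paper writes out the $\alpha\inverse$ case in~(i) rather than the $\alpha$ case) or slight elaborations on your part (you are a bit more careful about the a.e.-to-everywhere step in~(ii) and about continuity of $[\dd m/\dd m']$ in~(iii), neither of which the paper spells out).
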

We know that every groupoid \(G\) acts on it space of orbits (from
right) by \(u\cdot \gamma=s_G(\gamma)\) for \(u\in\base[G]\) and
\(\gamma\in G^u\). For a proper groupoid, this action is proper. A
function \(\phi\colon\base[G]\to \C\) is called invariant (under this
action) if \(\phi(u \cdot \gamma)=\phi(u)\), that is,
\(\phi(r_G(\gamma))=\phi(u)\). This is equivalent to saying that
\(\phi\circ r_G=\phi\circ s_G\) on \(G\).
\begin{proof}[Proof of Lemma~\ref{lem:equi-measures-on-quotient}]
  (i): We shall check that
  \(m\circ \alpha\inverse\sim m'\circ \alpha\inverse\). Let
  \(f\in\Contc(G)\). Then
  \begin{align*}
    \int_G f(\gamma)\,\dd m\circ\alpha\inverse(\gamma)
    &\defeq
      \int_{\base}\int_G f(\gamma\inverse)\,\dd \alpha^x(\gamma)\,\dd
      m(x)\\
    &=\int_{\base}\int_G f(\gamma\inverse)\,\dd
      \alpha^x(\gamma)\frac{\dd m}{\dd m'}(x)\,\dd
      m'(x).
  \end{align*}
  Since \(x=r_G(\gamma)\), we may write the last term above as
  \[
    \int_{\base}\int_G f(\gamma\inverse) \frac{\dd m}{\dd
      m'}(r_G(\gamma))\,\dd \alpha^x(\gamma)\,\dd m'(x)
  \]
  which, in turn, equals
  \[
    \int_{\base}\int_G f(\gamma\inverse) \frac{\dd m}{\dd
      m'}(s_G(\gamma\inverse))\,\dd \alpha^x(\gamma)\,\dd m'(x)\defeq
    \int_G f(\gamma)\frac{\dd m}{\dd m'}\circ s_G(\gamma)\,\dd m'\circ
    \alpha\inverse(\gamma).
  \]
  Thus \(m\circ \alpha\inverse\sim m'\circ \alpha\inverse\) and the
  Radon-Nikodym derivative
  \(\dd m\circ \alpha\inverse/ \dd m'\circ \alpha\inverse = \dd m/\dd
  m' \circ s_G\). The other claim can be proved along similar lines.
  
  \noindent (ii): Since \(m\) (or \(m'\)) is an invariant measure on
  \(\base[G]\), we have \(m\circ \alpha=m\circ \alpha\inverse\) (and
  similar for \(m'\)). Which along with (i) above says that
  \begin{equation*}
    \frac{\dd m}{\dd m'} \circ s_G = \frac{\dd m\circ
      \alpha\inverse}{\dd m'\circ \alpha\inverse} = \frac{\dd
      m\circ\alpha}{\dd m'\circ \alpha} = \frac{\dd m}{\dd m'} \circ
    r_G.
  \end{equation*}
  In other words, \(\dd m/\dd m'\) is an invariant function on
  \(\base[G]\).
   
  \noindent (iii): First of all, we note that functions
  \(\frac{\dd m}{\dd m'}\) and \([\frac{\dd m}{\dd m'}]\) have the
  same images in \(\R\). Now, given \(m\) (or \(m'\)), recall the
  definition of \(\mu\) (or \(\mu'\), respectively) from
  Proposition~{3.1}(ii) in~\cite{Holkar2017Composition-of-Corr}. Let
  \(f\in \Contc(\base[G]/G)\) and
  \(e\colon \base[G]\to \R^*\cup\{0\}\) be a cutoff function for the
  quotient map \(q\). Then
  \begin{equation*}
    \mu(f)\defeq m((f\circ q) \cdot e)=m'\left(f\circ q\cdot \frac{\dd m}{\dd m'}
      \cdot e\right)= m'\left(f \cdot \left[\frac{\dd m}{\dd m'}\right] \cdot
      e\right)\defeq m'\left(f\cdot \left[\frac{\dd m}{\dd m'}\right]\right).
  \end{equation*}
  Thus \(\mu\sim \mu'\) and
  \(\frac{\dd \mu}{\dd \mu'}=\left[\frac{\dd m}{\dd m'}\right]\).
\end{proof}

\begin{definition}[Isomorphism of topological correspondences]
  \label{def:iso-corr}
  Let \((X, \lambda)\) and \((X', \lambda')\) be topological
  correspondences from \((G, \alpha)\) to \((H, \beta)\). An
  isomorphism \((X, \lambda, \Delta)\to (X', \lambda', \Delta')\) is a
  \(G\)-\(H\)\nb-equivariant homeomorphism \(\phi: X \rightarrow X'\)
  with \(\phi_*(\lambda)\sim\lambda'\).
\end{definition}


Following is an example of isomorphism of correspondences;
Proposition~\ref{prop:lifted-cocycles-are-iso} gives a class of
isomorphism correspondences.
\begin{example}[The identity isomorphism of identity correspondence]
  \label{exa:id-corr} In this example, we discuss the left and right
  identity isomorphisms.  Let \((G,\alpha)\) and \((H,\beta)\) be
  topological groupoids with Haar systems. Recall from
  Example~\ref{exa:id-corr-iso} that \((G,\alpha\inverse)\) is a
  topological correspondence on \((G,\alpha)\), and similar is for
  \((H,\beta)\). Let \((X,\lambda)\colon (G,\alpha)\to (H,\beta)\) be
  a topological correspondence. What are composites of
  \((G,\alpha\inverse)\) and \((X,\lambda)\), and \((X,\lambda)\) and
  \((H,\beta\inverse)\)?

  \noindent \textit{The left identity isomorphism:} The quotient
  \((G\times_{\base[G]}X)/G\) is isomorphic to \(X\); the map
  \(i\colon (G\times_{\base[G]}X)/G \to X\) given by
  \(i([\gamma\inverse,x])=\gamma\inverse x\), where
  \([\gamma\inverse, x]\in (G\times_{\base[G]}X)/G\) is the
  equivalence class of \((\gamma\inverse,x)\in G\times_{\base[G]}X\),
  induces this homeomorphism. The inverse of \(i\) is given by
  \(i\inverse(x)=[r_X(x),x]\). Moreover, the map~\(i\) is
  \(G\)-\(H\)\nb-equivariant. Thus we identify \(X\)~as the quotient
  space \((G\times_{\base[G]}X)/G\) and call the map
  \[
    q\colon G\times_{\base[G]}X \to X, \quad q\colon (\gamma\inverse,
    x)\mapsto \gamma\inverse x
  \]
  the quotient map\footnote{At this point, we avoid using a different
    notation --- say \(X'\) --- for the quotient space
    \((G\times_{\base[G]}X)/G\) as it will increase complexity in
    writing.}. The family of measures \([\alpha]\) ---as in~{(2)} on
  page~\pageref{page:compo-rev-2}--- along the quotient map \(q\) is
  given by
  \[
    \int_{G\times_{\base[G]}X} f(t)\,\dd[\alpha]_{x}(t)=\int_G
    f(\gamma,\gamma\inverse x)\,\dd\alpha^{r_X(x)}(\gamma)
  \]
  where \(f\in \Contc(G\times_{\base[G]}X)\) and \(x\in X\).

  Let \(T\) denote the transformation groupoid
  \((G\times_{\base[G]}X)\rtimes G\) for the diagonal action of \(G\)
  on \(G\times_{\base[G]}X\). Then we know that \(T\) is a proper
  groupoid with \(\base[T]=G\times_{\base[G]}X\). Moreover, the Haar
  system \(\alpha\) on \(G\) induces a Haar system \(\tilde{\alpha}\)
  on \(T\). Fix \(u\in\base[H]\). Define the measure
  \(m_u=\alpha\times\lambda_u\) on \(\base[T]\) as in ---as in~{(1)}
  on page~\pageref{page:compo-rev-1}. Then \(m_u\) is
  \((T,\tilde{\alpha})\)\nb-quasi-invariant measure on \(\base[T]\)
  (cf.\,(2) on page~\pageref{page:compo-rev-3}). The modular function
  \(\Delta_1\) of \(T\) associated with \(m_u\) is
  \begin{equation}\label{eq:left-id-corr-1}
    \Delta_1(\gamma\inverse, x, \eta)=\Delta(\eta\inverse,x) \qquad
    \text{
      (see~Equation~\eqref{equ:composition-1-cocycle})}
  \end{equation}

  where \(\Delta\) is the adjoining function of \((X,\lambda)\). Now
  we get a 0\nb-cocycle \(b\colon \base[T]\to\R^+\) on \(T\) such that
  \(d^0(b)=\Delta_1\) and \(bm_u\) is a
  \((T,\tilde{\alpha})\)\nb-invariant measure on \(\base[T]\). This
  measure produces a measures \(\mu_u\) on \(X\homeo \base[T]/T\) such
  that \((X,\{\mu_u\}_{u\in\base[H]})\) composite of \((G,\alpha)\)
  and \((X,\lambda)\). Write \(\{\mu_u\}_{u\in\base[H]}=\mu\). In what
  follows, we shall show that the identity map on \(X\) produces an
  isomorphism of topological correspondences \((X,\mu)\) and
  \((X,\lambda)\).

  Fix \(u\in\base[H]\). The measure \(\lambda_u\circ [\alpha]\) on
  \(\base[T]\) is also (\((T,\tilde{\alpha})\)-)invariant due to
  Proposition~{3.1}(i)
  in~\cite{Holkar2017Composition-of-Corr}. Moreover, the third part of
  the same Proposition shows that \(\lambda_u\) is the unique measure
  on \(X\) that disintegrates \(\lambda_u\circ[\alpha]\) along
  \([\alpha]\).  What is the relation between
  \(\lambda_u\circ [\alpha]\) and \(bm_u\)? We claim that they are
  equivalent invariant measures on \(\base[T]\) with a continuous
  Radon-Nikodym derivative. If the claim holds, then
  Lemma~\ref{lem:equi-measures-on-quotient}(iii) applied to
  \((T,\tilde{\alpha}), bm_u\) and \(\lambda_u\) will show that
  \(\mu_u\sim \lambda_u\) and the Radon-Nikodym derivative
  \(\dd\mu_u/\dd\lambda_u\) is continuous. This proves that the
  identity map of \(X\) induces the isomorphism of topological
  correspondences \((X,\mu)\) and \((X,\lambda)\).

  Now we prove the claim, namely, \(bm_u\) and
  \(\lambda_u\circ [\alpha]\) are equivalent invariant measures on
  \(\base[T]\) having continuous Radon-Nikodym derivative. 
  The first observation is that \(\lambda_u\circ [\alpha]\) and
  \(m_u\) are equivalent. This follows because for any
  \(f\in \Contc(\base[T])\),
  \begin{align*}
    m_u(f)&=\int_X\int_G f(\gamma\inverse,x)
            \,\dd\alpha^{u}(\gamma)\dd\lambda_u(x)\\
          &=\int_X\int_G f(\gamma,\gamma\inverse
            x) \Delta(\gamma,\gamma\inverse
            x)\,\dd\alpha^{u}(\gamma)\dd\lambda_u(x)
            = \lambda_u\circ [\alpha](f\cdot\Delta).
  \end{align*}
  The second equality above is the definition of adjoining function
  \(\Delta\) (see Definition~\ref{def:correspondence}(iv)) of
  \((X,\lambda)\). Thus \(m_u\sim\lambda_u\circ[\alpha]\) with the
  Radon-Nikodym derivative
  \(\dd m_u/\dd \lambda_u\circ [\alpha]= \Delta>0\). Since \(b\) is
  0-cocycle in the \(\R^+\)\nb-valued cohomology of \(T\) (with
  \(d^0(b)=\Delta_1>0\)), we get that \(bm_u\sim m_u\). Therefore,
  \(bm_u\sim \lambda_u\circ [\alpha]\) and the Radon--Nikodym
  derivative \(\dd bm_u/ \dd \lambda_u\circ [\alpha]=b
  \Delta\). Moreover, Lemma~\ref{lem:equi-measures-on-quotient}{(iii)}
  says that \(\mu_u\sim \lambda_u\) with
  \(\dd\mu_u/\dd \lambda_u= [\dd bm_u/ \dd \lambda_u\circ [\alpha]=[b
  \Delta]\); note that both \(b\) and \(\Delta\) are
  \(H\)\nb-invariant, therefore, \([b\Delta]\) makes sense.

  Before we finish the discussion, we simplify the function
  \(b\Delta\) so that it will prove useful in later computations. For
  \((\eta\inverse,x)\in \base[T]\),
  \begin{equation*}
    b\Delta(\eta\inverse,x)=b(\eta\inverse,x)\Delta(\eta\inverse,x).
  \end{equation*}
  Using Equation~\eqref{eq:left-id-corr-1} and the fact that
  \(\Delta=b\circ s_G/b\circ r_G=d^0(b)\), we can see that last term
  above equals
  \begin{multline*}
    b(\eta\inverse,x)\Delta_1(\eta\inverse,x,\eta)=b(\eta\inverse,x)\frac{b\circ
      s_T(\eta\inverse,x,\eta)}{b\circ
      r_T(\eta\inverse,x,\eta)}\\
    =b(\eta\inverse, x)\frac{b(s_H(\eta), \eta\inverse
      x)}{b(\eta\inverse,x)} = b(s_H(\eta), \eta\inverse x).
  \end{multline*}
  We know that the space of units of the transformation groupoid
  \(H\ltimes X\) is identified with \(X\) via the homeomorphism
  \(\xi \colon X\to H\ltimes X \to X, \xi\colon x\mapsto
  (r_X(x),x)\). Now note that \(b\) is already a function on
  \(\base[T]\), therefore, the Radon--Nikodym derivative
  \begin{equation}
    \label{eq:left-id-corr-2}
    \frac{\dd\mu_u}{\dd
      \lambda_u}(\eta\inverse,x)=[b\Delta(\eta\inverse,x)]=
    b(s_H(\eta),\eta\inverse x)=b\circ s_{H\ltimes X}(\eta\inverse, x)=\xi(\eta\inverse x)
  \end{equation}
  for all \((\eta\inverse, x)\in H\ltimes X\).  \smallskip

  \noindent \emph{The right identity isomorphism:} In a similar
  fashion as for the left identity isomorphism, one may prove that any
  composite
  \((X,\lambda)\circ (H,\beta\inverse)\colon (G,\alpha)\to (H,\beta)\)
  is isomorphic to \((X,\lambda)\). The equivariant homeomorphism of
  spaces \((X\times_{\base[H]}H)/H\homeo X\) is clear. While
  constructing the measures, note that the adjoining function of
  \((H,\beta)\) is the constant function 1,
  see~Example~\ref{exa:id-corr-iso}. Therefore, the function
  \(\Delta_1\) above is also constant~1. Choose any 0\nb-cocycle \(b\)
  on the transformation groupoid
  \(T\defeq (X\times_{\base[H]}H)\rtimes H\) with
  \(d^0(b)=\Delta_1=1\) and \(b m_u\) is an invariant measure (where
  \(m_u\defeq \lambda\times\beta\inverse_u\)) on the space of units of
  \(T\). Then
  \[
    1=d^0(b)\defeq \frac{b\circ s_T}{b\circ r_T}, \quad \text{ that
      is, } \quad b\circ r_T=b\circ s_T.
  \]
  Thus \(b\) induces a function \([b]\) on
  \(X\homeo (X\times_{\base[H]}H)/H\). With this observation, the
  discussion for left identity correspondence holds here
  word-to-word. And we get that any composite \((X\circ H,\mu)\) of
  \((X,\lambda)\) and \((H,\beta\inverse)\) is isomorphic to
  \((X,\lambda)\) with the homeomorphism given above and the
  Radon-Nikodym derivative \(\dd\mu_u /\dd\lambda_u=[b]\).
\end{example}

\smallskip

Recall from Definition~\ref{def:composition} that a composite of two
topological correspondences is not defined uniquely; it depends on a
0\nb-cocycle on a certain transformation groupoid. Now we show that
any two composites of topological correspondences are isomorphic in
the sense of Definition~\ref{def:iso-corr}.
\begin{proposition}
  \label{prop:lifted-cocycles-are-iso}
  Let
  \begin{align*}
    (X, \alpha)&\colon (G_1, \lambda_1) \rightarrow (G_2, \lambda_2)\\
    (Y, \beta)&\colon (G_2, \lambda_2) \rightarrow (G_3, \lambda_3)
  \end{align*}
  be topological correspondences, and let
  \[
    (\Omega, \mu), (\Omega, \mu')\colon (G_1,\lambda_1)\rightarrow
    (G_3, \lambda_3)
  \]
  be two composites of these correspondences. Assume that
  \((\Omega, \mu)\) is obtained by using a \(0\)\nb-cochain \(b\) and
  \((\Omega',\mu')\) is obtained by using a 0\nb-cochain \(b'\) on the
  transformation groupoid \((X\times_{\base[G]_2}Y)\rtimes G_2\).
  Then \((\Omega, \mu)\) and \( (\Omega, \mu')\) are isomorphic
  topological correspondences.
\end{proposition}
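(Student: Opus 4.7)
\smallskip

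\noindent\textbf{Proof proposal.} The two composites share the same underlying space $\Omega = (X\times_{\base[G_2]}Y)/G_2$, the same $G_1$-$G_3$-bispace structure, and the same momentum maps; only the families of measures $\mu$ and $\mu'$ differ. The plan is therefore to take as the candidate isomorphism the identity homeomorphism $\Id_\Omega\colon \Omega\to\Omega$, which is automatically $G_1$-$G_3$-equivariant, and reduce the question to showing $\mu_u\sim\mu'_u$ with a continuous Radon-Nikodym derivative, for every $u\in\base[G_3]$.

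The key algebraic observation is that by Definition~\ref{def:composition}, both $b$ and $b'$ satisfy $d^{0}(b)=d^{0}(b')=D$ on the transformation groupoid $T\defeq (X\times_{\base[G_2]}Y)\rtimes G_2$. Hence $d^{0}(b/b')=1$, so the ratio $b/b'\colon Z\defeq X\times_{\base[G_2]}Y \to \R^{*}_{+}$ is a continuous $G_2$-invariant function on the space of units $\base[T]=Z$. In particular $b/b'$ descends to a continuous positive function $[b/b']$ on $\Omega$.

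Next I would compare the two lifted measures on $Z$. Fix $u\in\base[G_3]$ and write $m_u=\alpha\times\beta_u$ as on page~\pageref{page:compo-rev-1}. By construction $bm_u$ and $b'm_u$ are both $(T,\tilde\alpha)$-invariant measures on $\base[T]$ (this is what the choice of $b,b'$ guarantees), and they are trivially equivalent with Radon-Nikodym derivative $\dd(bm_u)/\dd(b'm_u)=b/b'$, which is positive continuous. Moreover, by Equation~\eqref{eq:composite-measure}, $\mu_u$ (respectively $\mu'_u$) is obtained from $bm_u$ (respectively $b'm_u$) by disintegration along the quotient map $\pi\colon Z\to\Omega$, in the sense of Proposition~3.1 of~\cite{Holkar2017Composition-of-Corr}.

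At this point I would invoke Lemma~\ref{lem:equi-measures-on-quotient}(iii), applied to the proper groupoid $T$ with Haar system $\tilde\alpha$ and to the pair of invariant measures $bm_u\sim b'm_u$ on its unit space. The lemma yields $\mu_u\sim\mu'_u$ and identifies the Radon-Nikodym derivative as $\dd\mu_u/\dd\mu'_u=[b/b']$. Since this identification is uniform in $u$ (the descent $[b/b']$ does not depend on $u$), the transverse continuity required in Definition~\ref{def:equivalent-measure-families}(ii) is immediate, so $\mu\sim\mu'$ as families of measures along $s_\Omega$. Combined with the trivial equivariance of $\Id_\Omega$, this establishes that $\Id_\Omega$ is an isomorphism of topological correspondences in the sense of Definition~\ref{def:iso-corr}. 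The only non-routine step is checking the hypotheses of Lemma~\ref{lem:equi-measures-on-quotient}, i.e., that $bm_u$ and $b'm_u$ are genuinely $G_2$-invariant with a common invariant (hence descendable) Radon-Nikodym derivative; this is exactly what the cocycle condition $d^{0}(b/b')=1$ delivers, so no serious obstacle arises.
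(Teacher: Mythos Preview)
Your proposal is correct and follows essentially the same route as the paper: both use the identity map on $\Omega$ and exploit that $d^0(b)=d^0(b')$ forces the ratio $b'/b$ to be $G_2$-invariant, hence to descend to a continuous positive function on $\Omega$ serving as the Radon--Nikodym derivative. The only cosmetic difference is that the paper carries out the comparison of $\mu_u$ and $\mu'_u$ by a direct computation with the cutoff function in Equation~\eqref{eq:composite-measure}, whereas you package that computation into an appeal to Lemma~\ref{lem:equi-measures-on-quotient}(iii); the content is the same.
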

\begin{proof}
  In this proof we use the notation which is used after
  Theorem~\ref{thm:mcorr-gives-ccorr} to state
  Definition~\ref{def:composition}. Let
  \(Z\defeq X\times_{\base[G]_2} Y\) and
  \(\pi\colon Z\rightarrow\Omega\defeq Z/G_2\) be the quotient
  map. Since \(b,b'\in\CC^0_{G_3}(Z,\R_+^*)\) with
  \(d^0(b) = d^0(b')=\Delta\), Remark~{1.14}
  in~\cite{Holkar2017Construction-of-Corr} gives a positive function
  \(c\colon\Omega\to\R^*_+\) with the property that
  \begin{equation}
    \label{eq:-1-cocycle-c}
    b' = (c\circ \pi ) b.
  \end{equation}
  Since the quotient map \(\pi\) is open, the continuity of \(b\),
  \(b'\) implies that any function \(c\) satisfying
  Equation~\eqref{eq:-1-cocycle-c} is continuous. Let
  \(f\in \Contc(\Omega)\) and \(u\in \base[G]_3\). Then using
  Equation~\eqref{eq:composite-measure} we write
  \begin{align*}
    \int_\Omega f[x,y]\,\dd\mu_u'([x,y])
    &=\int_Y\int_X f\circ\pi(x,y) e(x,y)\, b'(x,y)\, \dd\alpha_{r_Y(y)}(x)\, \dd\beta_u(y)\\ 
    &=\int_Y\int_X f\circ\pi(x,y) e(x,y)\, c\circ\pi(x,y) b(x,y)\, \dd\alpha_{r_Y(y)}(x)\, \dd\beta_u(y) \\
    &=\int_\Omega f[x,y] c[x,y]\,\dd\mu_u([x,y])
  \end{align*}
  where \(e\colon Z\to \R^+\) is a cutoff function. This calculation
  shows that for every \(u\in \base[G]_3\), \(\mu_u'\sim\mu_u\) with
  the Radon-Nikodym derivative \(\frac{\dd\mu'_u}{\dd\mu_u}=c\), where
  \(c\colon \Omega\to\R_+^*\) is a function satisfying
  Equation~\eqref{eq:-1-cocycle-c}.
\end{proof}
\medskip

Let \((G,\alpha)\) and \((H,\beta)\) be locally compact groupoids
equipped with Haar systems. For \(i=1,2,3\), let \((X_i, \lambda_i)\)
be a topological correspondence from \((G,\alpha)\) to
\((H,\beta)\). Assume that, for \(i=1,2\),
\(\phi_i\colon X_i\rightarrow X_{i+1}\) is an isomorphism of
correspondences. Then the composite
\(\phi_2\circ\phi_1\colon X_1\rightarrow X_3\) gives an isomorphism of
correspondences--- this follows because of the fact that the composite
of \(G\)-\(H\)\nb-equivariant maps is also an equivariant map and the
chain rule.

Let \((G,\alpha)\) and \((H,\beta)\) be groupoids with Haar
systems. Then isomorphism is an equivalence relation on the set of
topological correspondences from \((G,\alpha)\) to \((H,\beta)\): Let
\((X,\lambda_1)\), \((Y,\lambda_2)\) and \((Z,\lambda_3)\) be
correspondences from \((G,\alpha)\) to \((H,\beta)\).
\begin{description}[font=\normalfont]\setlength{\itemsep}{0pt}
\item[Reflexivity] is given by the identity function on \(X\).
\item[Symmetry] if \(\phi\) is an isomorphism from \((X,\lambda_1)\)
  to \((Y,\lambda_2)\), then \(\phi\inverse\) is
  \(G\)-\(H\)\nb-equivariant homeomorphism. Now use
  Corollary~\ref{cor:iso-reflexivity} to see that
  \(\phi\inverse_*(\lambda_2)\sim\lambda_1\).
\item[Transitivity] Follows from the discussion just before this
  paragraph.
\end{description}

\begin{remark}
  \label{rem:for-hor-2-comp}
  Let \((G,\alpha),(H,\beta)\) and \((K,\kappa)\) be groupoids with
  Haar systems. Let \((X,\lambda)\) and \((Y,\kappa)\) be
  correspondences from \((G,\alpha)\) to \((H,\beta)\), and
  \((X',\lambda')\) and \((Y',\kappa')\) be correspondences from
  \((H,\beta)\) to \((K,\mu)\). Let
  \((X\circ X',\lambda\circ\lambda')\) and
  \((Y\circ Y',\kappa\circ\kappa')\) be some composites of
  \((X,\lambda)\) and \((X',\lambda')\), and \((Y,\kappa)\) and
  \((Y',\kappa')\), respectively, see Figure~\ref{fig:vert-comp}. Thus
  \(X\circ X'= (X\times_{\base[H]}X')/H\), and the family of measures
  \(\lambda\circ\lambda'\) is given by~{(ii)}
  in~Definition~\ref{def:composition}. Similar is for \(Y\circ Y'\)
  and \(\kappa\circ\kappa'\). Let \(b_1\) and \(b_2\) be the cochains
  in appropriate groupoid cohomologies used to produce
  \(\lambda\circ\lambda'\) and \(\kappa\circ\kappa'\), respectively.

  Additionally, assume that \(\phi\colon X\to Y\) and
  \(\phi'\colon X'\to Y'\) are isomorphisms of correspondences. Then
  the map
  \([\phi\times\phi']\colon (X\times_{\base}X')/H\to
  (Y\times_{\base}Y')/H\) is an isomorphism of correspondences where
  \([\phi\times\phi']([x,y])=[\phi(x),\phi'(y')]\), see
  Figure~\ref{fig:vert-comp}.
  \begin{figure}[htb]
    \(\xymatrix@1@C=3em{ (G,\alpha)\quad
      \ar@/^1pc/[r]^{(X,\lambda)}_{}="0"
      \ar@/_1pc/[r]_{(Y,\kappa)}^{}="1" \ar@{=>}"0";"1"^{\phi} &
      \quad(H,\beta)\quad \ar@/^1pc/[r]^{(X',\lambda')}_{}="0"
      \ar@/_1pc/[r]_{(Y',\kappa')}^{}="1" \ar@{=>}"0";"1"^{\phi'}&
      \quad(K,\mu) }\) \qquad
    \(\xymatrix@1@C=5em{ (G,\alpha)\quad \ar@/^1pc/[r]^{(X\circ
        X',\lambda\circ \lambda')}_{}="0" \ar@/_1pc/[r]_{(Y\circ
        Y',\kappa\circ \kappa')}^{}="1" \ar@{=>}"0";"1"^{[\phi\times
        \phi']} & \quad(K,\mu) }\)
    \caption{}
    \label{fig:vert-comp}
  \end{figure}

  To check this, first of all, note that
  \(\phi\times \phi'\colon X\times_{\base[H]}X'\to
  Y\times_{\base[H]}Y'\) and
  \([\phi\times \phi']\colon X\circ X'\to Y\circ Y'\) are well-defined
  \(G\)-\(K\)\nb-equivariant homeomorphisms. Then, since
  \(\phi_*(\lambda)\sim \kappa\) and
  \({\phi'}_*(\lambda')\sim \kappa'\), we get
  \[
    (\phi\times\phi')_*(\lambda \times \lambda') \sim \kappa \times
    \kappa';
  \] the Radon-Nikodym derivative
  \[
    \frac{ \dd (\phi\times\phi')_*(\lambda \times \lambda')} { \dd
      \left(\kappa \times \kappa' \right)} = \frac{\dd
      \phi_*(\lambda)} {\dd\kappa}\; \frac{ \dd\phi_*'(\lambda')} {\dd
      \kappa'}.
  \]
  This discussion along with the fact that \(b_1\) and \(b_2\) are
  continuous positive functions allows us to say that
  \begin{equation}
    \label{eq:vert-comp}
    b_1\cdot (\phi\times\phi')_*(\lambda \times \lambda') \sim b_2\cdot(\kappa \times \kappa');
  \end{equation}
  the Radon-Nikodym derivative
  \[
    \frac{ \dd(b_1\, (\phi\times\phi')_*(\lambda \times \lambda'))} {
      \dd \left(b_2\,\kappa \times \kappa' \right)_u} =
    \frac{\dd\phi_*(\lambda)} {\dd\kappa}\; \frac{ \dd
      \phi_*'(\lambda'_u)}{\dd \kappa'}\;\frac{b_1}{b_2}.
  \]
  Now use Lemma~\ref{lem:equi-measures-on-quotient}{(iii)} on the
  transformation groupoid \((Y\times_{\base}Y') \rtimes H\) to
  conclude that \(\lambda\circ\lambda'\sim\kappa\circ\kappa'\). This
  remarks shows in the bicategory of topological correspondences, two
  2\nb-arrows between a two composable 1\nb-arrows induce a 2\nb-arrow
  between a composite of 2\nb-arrows.
\end{remark}

\subsection{The bicategory of topological correspondences}
\label{sec:bicat-topol-corr}

Now, with the help of the discussion until now in our hand, we can
define the data to form the bicategory of topological correspondences:
\begin{description}[font=\normalfont\itshape,leftmargin=*]\setlength{\itemsep}{0pt}
\item[Objects or vertices:] second countable, locally compact,
  Hausdorff groupoids with Haar systems.
\item[\(1\)\nb-arrows or edges] topological correspondences in which
  the space is locally compact, Hausdorff, second countable.
\item[\(2\)\nb-arrows or \(2\)\nb-cells] isomorphisms of topological
  correspondences (Definition~\ref{def:iso-corr}).
\item[Vertical composition of 2-arrows] 2\nb-arrows are merely
  functions between spaces; their composition is the usual composition
  of functions.
\item[\(1\)\nb-identity arrow] the identity \(1\)\nb-arrow on
  \((G, \alpha)\) is \((G, \alpha\inverse)\), see
  Example~\ref{exa:id-corr-iso}.
\item[\(2\)\nb-identity arrow] the identity \(2\)\nb-arrow on a
  topological correspondence \((X,\lambda)\) is the identity map
  \(\textup{Id}_X\colon X\rightarrow X\).
\item[Composition of 1-arrows] composition of correspondences as in
  Definition~\ref{def:composition}.
\item[Horizontal composition of 2-arrows] with the data in
  Remark~\ref{rem:for-hor-2-comp}, we call \([\phi\times\phi']\) the
  horizontal product of \(\phi\) and \(\phi'\).
\item[The associativity isomorphism] described in
  Theorem~\ref{thm:proof-of-bicategory-of-top-corr} below.
\item[The identity isomorphism] described in Example~\ref{exa:id-corr}
  earlier.
\end{description}
Thus now we have the data required in \(i\)--\(iv\) in
Definition~\ref{def:bicat}. The following theorem describes how this
data fulfills the necessary conditions to give us the bicategory of
topological correspondences. The proof of associativity isomorphisms
is very long. Therefore, we break it into pieces and describe after
the next theorem.

\begin{theorem}
  \label{thm:proof-of-bicategory-of-top-corr}
  The above data along with (obvious) associativity and identity
  isomorphisms form the bicategory \(\tcorr\) of topological
  correspondences.\end{theorem}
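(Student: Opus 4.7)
The proof decomposes into verifying the structural data (i)--(vi) of Definition~\ref{def:bicat} and checking the two coherence conditions (vii)--(viii). A substantial portion of the data has already been assembled in the preceding subsection. For each pair of objects $A,B$, the category $\tcorr(A,B)$ has topological correspondences as objects and the isomorphisms of Definition~\ref{def:iso-corr} as morphisms; that this is an honest category follows from the equivalence-relation discussion preceding Remark~\ref{rem:for-hor-2-comp} together with the chain rule (Lemma~\ref{lem:chain-rule}), which also supplies associativity and identity for vertical composition.

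I would next check that horizontal composition assembles into a functor $c(A,B,C)\colon\tcorr(A,B)\times\tcorr(B,C)\to\tcorr(A,C)$. On 1-arrows this is Definition~\ref{def:composition}; on 2-arrows it is the horizontal product $[\phi\times\phi']$ of Remark~\ref{rem:for-hor-2-comp}. Functoriality reduces to verifying $[\textup{Id}_X\times\textup{Id}_{X'}]=\textup{Id}_{X\circ X'}$ (immediate from the definition of the induced map on quotients) and that horizontal composition commutes with vertical composition, which follows from the pointwise identity $(\psi\circ\phi)\times(\psi'\circ\phi')=(\psi\times\psi')\circ(\phi\times\phi')$ on the fibre product and its descent to the quotient. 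The identity 1-arrows and the left/right identity 2-isomorphisms are supplied by Examples~\ref{exa:id-corr-iso} and~\ref{exa:id-corr}; naturality in the 1-arrow variable is a direct computation using the explicit Radon--Nikodym derivatives $\dd\mu_u/\dd\lambda_u$ worked out there, and Proposition~\ref{prop:lifted-cocycles-are-iso} ensures that the resulting 2-isomorphism is independent of the auxiliary choice of 0-cocycle.

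The main obstacle is the associativity 2-isomorphism. Fix composable correspondences $(X_i,\lambda_i)\colon(G_i,\alpha_i)\to(G_{i+1},\alpha_{i+1})$ for $i=1,2,3$, and let $W\defeq X_1\times_{\base[G_2]}X_2\times_{\base[G_3]}X_3$, which carries commuting proper actions of $G_2$ and $G_3$. Both iterated composites $(X_1\circ X_2)\circ X_3$ and $X_1\circ(X_2\circ X_3)$ are naturally identified with the double quotient $W/(G_2\times G_3)$ via canonical $G_1$-$G_4$-equivariant homeomorphisms; this gives the underlying map of the 2-cell. It remains to compare the two composite families of measures: each lifts to $W$ to a measure of the form $b\cdot(\alpha_1\times\alpha_2\times\alpha_3)$ for a 0-cochain $b$ on a suitable transformation groupoid whose coboundary is forced by Equation~\eqref{equ:composition-1-cocycle}, so by the cocycle argument underlying Proposition~\ref{prop:lifted-cocycles-are-iso} the two lifts differ by a continuous positive $(G_2\times G_3)$-invariant function on $W$. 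Applying Lemma~\ref{lem:equi-measures-on-quotient}(iii) to the iterated disintegrations $W\to W/G_3\to W/(G_2\times G_3)$ and $W\to W/G_2\to W/(G_2\times G_3)$ then yields the required continuous Radon--Nikodym derivative between the two composite measures. The real bookkeeping challenge is to choose the intermediate cutoff functions and 0-cochains compatibly so that the two iterated constructions descend to genuinely comparable measures on the common quotient; I expect essentially all of the effort in the full proof to be absorbed here.

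Finally, the pentagon and triangle coherence diagrams reduce to equalities between continuous positive functions on the relevant iterated fibre products, because every 2-isomorphism in play is determined by its Radon--Nikodym derivative (the underlying map of spaces is always the canonical one coming from commutativity of fibre products and commuting group actions). The pentagon then collapses, via Lemma~\ref{lem:chain-rule}, to a fourfold chain-rule identity among the products of cocycle-correction factors appearing at the five vertices, and the triangle similarly collapses, via the explicit formula for $\dd\mu_u/\dd\lambda_u$ computed in Example~\ref{exa:id-corr}, to the defining cocycle identity $d^0(b)=\Delta$. I therefore expect the coherence checks to be routine once the associativity 2-cell is pinned down, with the remaining labour in the full proof going into its explicit description.
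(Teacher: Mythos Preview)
Your outline is essentially correct and matches the paper's strategy for the bulk of the work: the associativity 2-cell is built exactly as you describe, by passing through the common triple fibre product \(W=X_1\times_{\base[G_2]}X_2\times_{\base[G_3]}X_3\), exhibiting both iterated composites as quotients of \(W\) by the \(G_2\times G_3\)-action, lifting both composite families of measures to \(W\) as \(B'\cdot(\lambda_1\times\lambda_2\times\lambda_3)\) and \(B''\cdot(\lambda_1\times\lambda_2\times\lambda_3)\) for 0-cochains with common coboundary, and then descending via Lemma~\ref{lem:equi-measures-on-quotient}(iii). The paper carries this out in Lemmas~\ref{lem:left-triangle-asso-vertical-measures}--\ref{lem:left-triangle-asso-top-measures} and Proposition~\ref{prop:left-measures-equi}, and your anticipation that this is where the real effort lies is accurate.

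Where you diverge from the paper --- and overcomplicate matters --- is in the coherence checks. You propose to verify the pentagon and triangle by tracking Radon--Nikodym derivatives through chain-rule identities, on the grounds that ``every 2-isomorphism in play is determined by its Radon--Nikodym derivative.'' But this inverts the definition: by Definition~\ref{def:iso-corr}, a 2-arrow \emph{is} the equivariant homeomorphism \(\phi\); the condition \(\phi_*(\lambda)\sim\lambda'\) is a property it must satisfy, not additional data it carries. Consequently, two 2-arrows with the same source and target are equal if and only if their underlying maps of spaces agree. Since you already observe that every underlying map in the pentagon and triangle is the canonical rebracketing homeomorphism, the coherence diagrams commute as soon as those canonical maps compose correctly --- and that is a one-line element chase (e.g.\ \([[[x_1,x_2],x_3],x_4]\mapsto\cdots\mapsto[x_1,[x_2,[x_3,x_4]]]\) along either route), which is exactly what the paper does. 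No Radon--Nikodym bookkeeping is needed at this stage; the measure work was entirely absorbed in showing that each individual edge of the pentagon is a legitimate 2-arrow.
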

\begin{proof}
  In the following discussion, the number of each topic indicates what
  topic in Definition~\ref{def:bicat} it is.

  \noindent v){\itshape Associativity
    isomorphism}: \label{page:asso-data} Firstly, we list our data and
  notation for the associativity isomorphism.
  \begin{enumerate}[i),leftmargin=*]
  \item For \(i\in\{1,2,3\}\), \((X_i,\lambda_i)\) is a correspondence
    from \((G_i,\alpha_i)\) to \((G_{i+1},\alpha_{i+1})\) with
    \(\Delta_i\) as the adjoining function;
  \item \((X_{i(i+1)},\mu_{i(i+1)})\) denotes a composite of
    \((X_i,\lambda_i)\) and \((X_{i+1},\lambda_{i+1})\) for
    \(i=1,2\). Thus \(X_{ii+1}\) denotes the quotient space
    \((X_i\times_{\base[G]_{i+1}}X_{i+1})/G_{i+1}\). We write the
    cochain in
    \(\CC^0_{G_{i+2}}((X_i\times_{\base[G]_{i+1}}X_{i+1})\rtimes
    G_{i+1},\R^*_+)\) that produces the family of measures
    \(\mu_{i(i+1)}\) as \(b_{i(i+1)}\)
    (cf.\,Definition~\ref{def:composition}).  We write \(\pi_{ii+1}\)
    for the quotient map
    \(X_i\times_{\base[G]_{i+1}}X_{i+1} \to
    (X_i\times_{\base[G]_{i+1}}X_{i+1})/G_{i+1}\).
  \end{enumerate}
  Each space \(X_i\) above is locally compact, Hausdorff, and the
  action of the groupoid \(G_{i+1}\) on it is proper for
  \(i=1,2,3\). Therefore, the diagonal action of \(G_{i+1}\) on the
  fibre product \(X_i\times_{\base[H]_{i+1}} X_{i+1}\) is
  proper. Moreover, the similar diagonal action of \(G_2\times G_3\)
  on \(X_{1}\times_{\base[G]_2}X_2\times_{\base[G]_3}X_{3}\) is
  proper.  Let \(T\) denote the proper transformation groupoid
  \((X_1\times_{\base[G]_2}X_2\times_{\base[G]_3}X_3)\rtimes
  (G_2\times G_3)\) for the diagonal action of \(G_2\times G_3\) on
  \(X_1\times_{\base[G]_2}X_2\times_{\base[G]_3}X_3\). Let
  \(\pi_{123}\) denote the quotient map \(\base[T]\to X_{123}\). The
  Haar system \(\alpha_2\times\alpha_3\) on \(G_2\times G_3\) induces
  a Haar system on \(T\) which we denote by \(\alpha_2\times\alpha_3\)
  itself. The quotient \(X_{123}=\base[T]/T\) is denoted by
  \(X_{123}\).

  Let \((X_{(12)3},\mu_{(12)3})\) be the given composite of
  \((X_{12},\mu_{12})\) and \((X_3,\lambda_3)\), and let similar be
  the meaning of \((X_{1(23)},\mu_{1(23)})\). Let
  \(\pi'_{(12)3}\colon X_{12}\times_{\base[G]_{3}}X_3\to X_{(12)3}\)
  be the quotient map, and similar be the meaning of
  \(\pi'_{1(23)}\). Let \(\pi_{(12)3}=\pi'_{(12)3}\circ
  \pi_{12}\). The proof starts now by defining two functions \(a'\)
  and \(a''\) below. All above data and these maps gives us
  Figure~\ref{fig:asso}. The map \(a\) in this figure is the
  associativity isomorphism that we explain now.

  Define
  \begin{align*}
    a'&\colon
        X_{123} \to X_{(12)3}, \text{ by }& a'\colon [x_1, x_2, x_2]
    &\mapsto [[x_1,x_2],x_3],\\
    a''&\colon
         X_{123} \rightarrow X_{1(23)}, \text{ by } &a''\colon [x_1, x_2, x_2] &\mapsto [x_1, [x_2, x_3]]
  \end{align*}
  where \([x_1,x_2,x_3]\in X_{123}\). We show that \(a'\) is
  well-defined and the well-definedness of \(a''\) can be proven along
  similar lines. Let
  \begin{align*}
    \pi_{12}\times\Id_{X_3}&\colon \base[T]\to X_{12}\times_{\base[G]_3} X_3,\\
    \pi_{(12)3}'&\colon X_{12}\times_{\base[G]_3} X_3\to X_{(12)3}
  \end{align*}
  be the quotient maps for the diagonal actions of \(H_2\) and \(H_3\)
  for the actions on \(\base[T]\) and
  \(X_{12}\times_{\base[G]_3}X_{3}\), respectively. Then
  \(\pi_{123}'\circ \pi_{12}\times\Id_{X_3}=\pi_{123}\) is a
  well-defined continuous surjection
  (cf.\,Figure~\ref{fig:asso}). Note that for
  \((x_1,x_2,x_3)\in \base[T]\) and appropriate
  \((\gamma_1,\gamma_2)\in G_1\times G_2\),
  \begin{multline*}
    \pi_{123}'(\pi_{12}\times\Id_{X_3} (x_1\gamma_1,\gamma_1\inverse
    x_2\gamma_2,\gamma_2\inverse x_3))=[[x_1\gamma_1,\gamma_1\inverse
    x_2\gamma_2],\gamma_2\inverse
    x_3]\\=[[x_1\gamma_1,\gamma_1\inverse
    x_2]\gamma_2,\gamma_2\inverse
    x_3]=[[x_1,x_2]\gamma_2,\gamma_2\inverse x_3]\\=[[x_1,x_2],
    x_3]=\pi_{123}'(\pi_{12}\times\Id_{X_3} (x_1,x_2,x_3)).
  \end{multline*}
  Therefore, due to the the universal property of the quotient space,
  \(\pi_{123}'\circ \pi_{12}\times\Id_{X_3}\) induces a continuous
  surjection \(X_{123} \to (X_1\circ X_2)\circ X_3\) which we call
  \(a'\).

  We claim that both \(a'\) and \(a''\) are homeomorphisms. We prove
  that \(a'\) is a homeomorphism, and the claim for \(a''\) can be
  proved similarly. Surjectivity of \(a'\) is built in its
  definition. To show that \(a'\) is one-to-one, assume that for some
  \([x_1,x_2,x_3],[y_1,y_2,y_3]\in X_{123}\)
  \(a'[x_1,x_2,x_3]=a'[y_1,y_2,y_3]\), that is,
  \([[x_1,x_2],x_3]=[[y_1,y_2],y_3]\). Then there is
  \(\gamma_2\in G_2\) with the property that
  \[
    ([x_1,x_2\gamma_2],\gamma_2\inverse
    x_3)=([x_1,x_2]\gamma_2,\gamma_2\inverse x_3)=([y_1,y_2],y_3).\]
  Now there is \(\gamma_1\in G_1\) such that
  \[
    (x_1\gamma_1,\gamma_1\inverse x_2\gamma_2,\gamma_2\inverse x_3)=
    (y_1,y_2,y_3).
  \]
  Thus
  \[
    [x_1,x_2,x_3]=[y_1,y_2,y_3]\in X_{123}.
  \]

  Next we show that \(a'\) is open. Let
  \(\pi_{123}\colon \base[T] \to X_{123}\) be the quotient map, and
  let \(U\subseteq X_{123}\) be open. Then \(\pi\inverse(U)\) is
  open. Since all the groupoids we are working with have open range
  maps, the quotient maps \(\pi_{12}\times\Id_{X_3}\) and
  \(\pi_{(12)3}'\) are
  open,~\cite{Muhly-Williams1995Gpd-Cohomolgy-and-Dixmier-Douady-class}*{Lemma
    2.1}. Therefore,
  \(a'(U)=p'(p(\pi\inverse(U)))\subseteq X_{(12)3}\) is open.

  Since the quotient maps in Figure~\ref{fig:asso} are
  \(G_1\)-\(G_4\)\nb-equivariant, so are \(a'\) and
  \(a''\). Eventually, we define the associativity isomorphism
  \(a(X_1,X_2,X_3)\) as
  \begin{align*}
    a(X_1,X_2,X_3)&=a^{\prime\prime }\circ
                    {a'}\inverse, \text{ that is, }\\
    a(X_1,X_2,X_3)
                  &\colon [[x_1, x_2], x_3]\mapsto [x_1, [x_2, x_3]]
  \end{align*}
  where \([[x_1, x_2], x_3]\in (X_1\circ X_2)\circ X_3\).  Whenever
  the spaces \(X_i\), in the discussion, are clear, we write \(a\)
  instead of \(a(X_1,X_2,X_3)\). We shall write \(a\) in rest of the
  discussion in this part of the proof.

  We still need to show that \(a\) satisfies~(ii) of
  Definition~\ref{def:iso-corr} to conclude that it is an isomorphism
  of correspondences. The proof of this fact is written at the end of
  the present main proof; the reader may see
  page~\pageref{page:asso-proof}. Proposition~\ref{prop:left-measures-equi}
  and Remark~\ref{rem:invariance-of-B/B} mark end of that
  proof.\medskip
  
  \noindent vi) {\itshape Identity isomorphisms}: Let \(i=1,2\) and
  \((G_i,\alpha_i)\) a groupoid with a Haar system, and let
  \((X, \lambda)\) be a correspondence from \((G_1,\alpha_1)\) to
  \((G_2,\alpha_2)\) with \(\Delta\) as the adjoining function. As we
  chose \((G_i,\alpha_i\inverse)\) is the identity arrow on
  \((G_1,\alpha)\). The claim is that the
  \(G_1\)-\(G_2\)\nb-equivariant homeomorphisms of spaces
  \begin{align*}
    l(G_1) &\colon (G_1\times_{\base[G]_1} X)/G_1 \rightarrow
             X, & [\gamma\inverse,x] &\mapsto \gamma\inverse x\\
    r(G_1) &\colon (X\times_{\base[G]_2} G_2)/G_2 \rightarrow
             X,& [x,\gamma] &\mapsto x\gamma
  \end{align*}
  are, respectively, the left and right identity coherences. This
  claim is proved, with an abuse of notation, in
  Example~\ref{exa:id-corr}. \medskip

  \noindent vii) {\itshape Horizontal composition of\; 2\nb-arrows}:
  Let \((X_i,\lambda_i), (X_i',\lambda_i')\) be correspondences from
  \((G_i,\alpha_i)\) to \((G_{i+1},\alpha_{i+1})\) for \(i=1,2\) and
  let \(\phi_i\colon X_i\to X_i'\) be isomorphisms of
  correspondences. Let \((X_{12},\mu)\) be a composite of
  \((X_1,\lambda_1)\) and \((X_2,\lambda_2)\), and \((X_{12}',\mu')\)
  a composite of \((X_1',\lambda_1')\) and \((X_2',\lambda_2')\).
  
  Now the map
  \[
    \phi_1\times \phi_2\colon X_1\times_{\base[G]_2} X_2 \to
    X_1'\times_{\base[G]_2} X_2',\quad \phi_1\times
    \phi_2(x,y)=(\phi_1(x), \phi_2(y)),
  \]
  where \((x,y)\in X_1\times_{\base[G]_2} X_2 \), is a
  \(G_1\)-\(G_3\)\nb-equivariant homeomorphism for the left and right
  obvious actions of \(G_1\) and \(G_3\). Moreover, this map is also
  \(G_2\)\nb-equivariant for the diagonal actions of \(G_2\) on the
  fibre products. Therefore, the map induces a well-defined
  \(G_1\)-\(G_3\)\nb-equivariant homeomorphism
  \( [\phi_1\times \phi_2]\colon X_{12} \to X_{12}'.  \) We define
  \([\phi_1\times \phi_2]\) the horizontal composition of the
  2\nb-arrows \(\phi_1\) and \(\phi_2\) which is often written as
  \(\phi_2\cdot_h\phi_2\). To check that this definition makes sense,
  one needs to check that \([\phi_1\times \phi_2]\) induces an
  isomorphism of topological correspondences \((X_{12},\mu)\) and
  \((X_{12}',\mu')\). This is verified in
  Remark~\ref{rem:for-hor-2-comp}.  \medskip
  
  \noindent vii) {\itshape Associativity coherence} : Let
  \((G_i, \alpha_i)\) be groupoids equipped with Haar systems for
  \(i=1,\dots,5\). Let \((X_i,\lambda_i)\) be a correspondence from
  \(G_i\) to \(G_{i+1}\) for \(i=1,\dots,4\). Let
  \((X_{(12)3},\mu_{(12)3})\) and \((X_{1(23)},\mu_{1(23)})\) have
  meanings as in case of the associativity isomorphism, see
  page~\pageref{page:asso-data}. And let
  \((X_{((12)3)4},\mu_{((12)3)4})\) and other subscripts of \(X\) and
  \(\mu\) with parentheses have similar meanings. Let \(a(\_,\_,\_))\)
  denote the associativity isomorphism when the blanks filled
  appropriately, as discussed for associativity isomorphism. Then the
  associativity coherence demands that the pentagon in
  Figure~\ref{fig:associativity-coherence} should commute.
  \begin{figure}[ht]
    \centering \begin{tikzpicture}[scale=0.9]
      \draw[dar] (0,1.8)--(0,0.2); \draw[dar] (0.1, -0.3)--(2.3,-1.6);
      \draw[dar] (5.7,-0.2)--(3.6,-1.6); \draw[dar]
      (5.8,1.8)--(5.8,0.24); \draw[dar](0.7,2)--(5.3,2);
      \node at (6, 0)[ scale=0.8]{\(X_{1((23)4)}\)}; \node at (6,
      2)[scale=0.8]{\(X_{(1 (23))4}\)}; \node at (0,
      2)[scale=0.8]{\(X_{((12)3)4}\)}; \node at
      (3,-1.7)[scale=0.8]{\( X_{1(2(34))}\)}; \node at (0,
      0)[scale=0.8]{\(X_{(12) (34)}\)};
      \node at ((-1.3, 1)[scale=0.7]{\(a(X_{12}, X_3, X_4)\)}; \node
      at(-0.1, -1.15)[scale=0.7]{\(a(X_1,X_2, X_{34})\)}; \node at
      (6.2,-1.1)[scale=0.7]{\(\Id_{X_1}\times\circ a(X_2, X_3,
        X_4)\)}; \node at (7.2,
      1.0)[scale=0.7]{\(a(X_1, X_{23}, X_4)\)}; \node at (3.0,
      2.25)[scale=0.7]{\(a(X_1,X_2,X_3)\times\Id_{X_4}\)};
    \end{tikzpicture}
    \caption{\small Associativity coherence}
    \label{fig:associativity-coherence}
  \end{figure}

  Let \([[[x_1,x_2],x_3],x_4]\) be a point in \(X
  _{((12)3)4}\). Following the left top vertex of the pentagon along
  the right top sides till the vertex at the bottom, we get that
  \begin{multline*} [[[x_1,x_2],x_3],x_4]\xmapsto{a(X_1,X_2,X_3)\times
      \Id_{X_4}}
    [[x_1,[x_2,x_3]],x_4]\\\xmapsto{a(X_1,X_{23},X_4)}
    [x_1,[[x_2,x_3],x_4]] \xmapsto{\Id_{X_1}\times a(X_2,X_3,X_4)}
    [x_1,[x_2,[x_3,x_4]]].
  \end{multline*}
  And, on the other way,
  \begin{multline*}
    [[[x_1,x_2],x_3],x_4]\xmapsto{a(X_{12},X_3,X_4)}
    [[x_1,x_2],[x_3,x_4]]\xmapsto{a(X_1,X_2,X_{34})}
    [x_1,[x_2,[x_3,x_4]]].
  \end{multline*}
  This the figure commutes.  \medskip

  \noindent viii) {\itshape Identity coherence} : Let
  \((X_i,\lambda_i)\) be topological correspondences from
  \((G_i,\alpha_i)\) to \((G_{i+1},\alpha_{i+1})\) for \(i=1,2\). Let
  \((G_1\circ X_1, \alpha_1\inverse\circ \lambda_2)\) be a composite
  of the identity correspondence at \((G_1,\alpha_1)\) and
  \((X_1,\lambda_1)\); let
  \((X_1\circ G_2, \lambda_1\circ \alpha_2\inverse)\) be a composite
  of \((X_1,\lambda_1)\) and the identity correspondence at
  \((G_2,\alpha_2)\); and let \((X_{12},\lambda_{12})\) be a composite
  of \((X_1,\lambda_1)\) and \((X_2,\lambda_2)\). Then we need to show
  that following diagram is commutative for identity coherence
  commutes:
  \begin{center}
    \begin{tikzpicture}[scale=0.9]
      \draw[dar] (0.5,2)--(3.2,2); \draw[dar] (-0.1,1.8)--(1.7,0.3);
      \draw[dar](4,1.8)--(2.3,0.3); \node at
      (-0.8,2)[scale=1]{\((X_1\circ G_2)\circ X_2\)}; \node at
      (4.5,2)[scale=1]{\(X_1 \circ (G_2\circ X_2)\)}; \node at
      (2,0){\(X_1\circ X_2\)}; \node at (2,
      2.3)[scale=0.8]{\(a(X_1, G_2, X_2)\)}; \node at (0.1,
      0.8)[scale=0.8]{\(r(G_2)\circ \Id_{X_2}\)}; \node at (4.3,
      0.8)[scale=0.8]{\(\Id_{X_1}\circ l(G_1)\)};
    \end{tikzpicture}
  \end{center}
  Let \([[x_1,\gamma], x_2]\in (X_1\circ G_2)\circ X_2\). Then
  \begin{multline*}
    \Id_{X_1}\circ l(G_1) \left(a(X_1, G_2, X_2)\left([[x_1,\gamma],
        x_2]\right)\right) =[x_1, \gamma x_2]\\=[x_1\gamma,
    x_2]=r(G_2)\circ \Id_{X_2}([[x_1,\gamma],x_2]).
  \end{multline*}
  This proves all the axioms and hence the theorem.
\end{proof}

\label{page:asso-proof} From here up to
Remark~\ref{rem:invariance-of-B/B} is the proof of last claim in the
associativity isomorphism in
Theorem~\ref{thm:proof-of-bicategory-of-top-corr}. Recall the data and
notation for the discussion that follows from
page~\pageref{rem:invariance-of-B/B}:

We have the commutating diagram in
Figure~\ref{fig:asso}. From~Lemma~\ref{lem:left-triangle-asso-vertical-measures}
to~\ref{lem:left-triangle-asso-top-measures}, we discuss the measures
residing on the spaces and various families of measures along the maps
in the left-half of Figure~\ref{fig:asso}; the discussion ends at
Lemma~\ref{lem:left-triangle-asso-top-measures}. Following is the idea
behind this: for each \(u\in\base[G]_4\), the measure
\(\lambda_1\times\lambda_2\times{\lambda_3}_u\) on \(\base[T]\) is
\((G_2\times G_3,\alpha_2\times \alpha_3)\)\nb-quasi-invariant. We
multiply this measure by appropriate 0-cocycles on the groupoid \(T\)
so that the resultant measure is invariant. This invariant measure
then agrees a disintegration along the map \(\pi_{123}\) with respect
to a family of measures along \(\pi_{123}\) ---which is induced by
\(\alpha_2 \times\alpha_3\)--- to produce measures \(\mu_{(12)3}\) and
\(\mu_{123}\) on \(X_{123}\).
Lemma~\ref{lem:left-triangle-asso-top-measures} shows that the
measures \({a'_*}\inverse({\mu_{(12)3}}_u)\) and \({\mu_{123}}_u\) on
\(X_{123}\) are equivalent.

In the right-half of Figure~\ref{fig:asso}, a measure
\({\mu_{1(23)}}_u\) similar to \({\mu_{(12)3}}_u\) can be defined on
\(X_{1(23)}\), and an analogue of
Lemma~\ref{lem:left-triangle-asso-top-measures} can be proved for
\({a''_*}\inverse({\mu_{1(23)}}_u)\) and \({\mu_{123}}_u\).

Putting the above two observations together, finally,
Proposition~\ref{prop:left-measures-equi} shows that the families of
measures \(a_*(\mu_{(12)3})\) and \(a_{1(23)}\) on \(X_{1(23)}\) are
equivalent; the proposition also computes that the Radon-Nikodym
derivative \(\dd a_*({\mu_{(12)3}}_u)/\dd {a_{1(23)}}_u\).

\begin{figure}[htb]
  \[
    \begin{tikzcd}[column sep=small]
      & & \base[T] \ar[dl, "\pi_{12}\times \Id_{X_3}"'] \ar[dr, "\Id_{X_1}\times \pi_{23}"] \ar[ddll,"\pi_{(12)3}"', bend right = 50] \ar[dr, "\Id_{X_1}\times \pi_{23}"] \ar[ddrr,"\pi_{(12)3}", bend left  = 50] \ar[dd,"\pi_{123}"]& &\\
      & X_{12}\times_{\base[G]_3}X_3 \arrow[ld, "\pi'_{(12)3}"'] & & X_1\times_{\base[G]_3}X_{23}\arrow[rightarrow]{rd}{\pi'_{(12)3}}&\\
      X_{(12)3} \ar[rrrr, "a={a''}\circ {a'}\inverse", bend right=20]&
      & X_{123}\ar[ll,"{a'}"'] \ar[rr,"{a''}"]& & X_{1(23)}
    \end{tikzcd}
  \]
  \caption{}
  \label{fig:asso}
\end{figure}

Let
\([\alpha_2]\times\delta_{X_3}=
\{[\alpha_2]\times{\delta_{X_3}}_{([x,y],z)}\}_{([x,y],z)\in
  X_{12}\times_{\base[G]_3}X_3}\) be the family of measures along
\(\pi_{12}\times \Id_{X_3}\) defined as
\[
  \int_{\base[T]} f(t)\;\dd
  [\alpha_2]\times{\delta_{X_3}}_{([x,y],z)}(t)=\int_{G_2}
  f(x\gamma,\gamma\inverse y,z)\,\dd\alpha_2(\gamma)
\]
for \(f\in \Contc(\base[T])\); let
\([\alpha'_{3}]=\{{\alpha_3}'_{[[x,y],z]}\}_{[[x,y],z]\in X_{(12)3}}\)
be the one along \(\pi'_{(12)3}\) given by
\[
  \int_{X_{12}\times_{\base[G]_3}X_3}
  g(t)\,\dd[\alpha_3]'_{[[x,y],z]}(t)=\int_{G_3}
  g([x,y]\eta,\eta\inverse z)\,\dd\alpha_3(\eta)
\]
where \(g\in \Contc(X_{12}\times_{\base[G]_3} X_3)\). The composite
\([\alpha'_3]\circ \left([\alpha_2]\times\delta_{X_3}\right)\) gives a
family of measures along
\(\pi_{(12)3}=\pi'_{(12)3}\circ (\pi_{12}\times \Id_{X_3})\).  One the
other hand, define families of measures
\begin{enumerate}[(i)]
\item \(\delta_{X_1}\times[\alpha_{3}]'\) along
  \(\Id_{X_1}\times \pi_{23}\),
\item \([\alpha_2]'\) along \(\pi_{123}'\) and
\item \(\pi_{1(23)}\) along \(\pi_{1(23)}\)
\end{enumerate}
analogous to \([\alpha_2]\times \delta_{X_3},[\alpha_3]'\) and
\(\pi_{(12)3}\), respectively. Finally, let \(\alpha_{123}\) be the
family of measures along \(\pi_{123}\) as in~(2) on
page~\pageref{page:compo-rev-2}.

\begin{definition}
  \label{def:left-vertical-measure}
  Define the following functions
  \begin{align*}
    A_{123}&\colon \Contc(\base[T])\to \Contc(X_{123}),& A_{123}(f)(w)&\defeq {\alpha_{123}}_{w}(f);\\
    A_{12}&\colon \Contc(\base[T])\to \Contc(X_{12}\times_{\base[G]_3}X_3), &
                                                                              A_{12}(f)(p)&\defeq
                                                                                            {\left([\alpha_2]\times\delta_{X_3}\right)}_{p}(f);\\
    A_{(12)3}'&\colon \Contc(X_{12}\times_{\base[G]_3}X_3)\to
                \Contc(X_{(12)3}), &  A_{(12)3}'(h)(q)&\defeq
                                                        {[\alpha_3]'}_{q}(h);\\
    A_{(12)3}&=A_{(12)3}'\circ A_{12} & A_{(12)3}(f)(q), &\defeq {\alpha_{(12)3}}_{q}(f);\\
    {A'}_*&\colon \Contc(X_{123})\to \Contc(X_{(12)3}), & A'_*(k)(w)&\defeq k\circ {a'}\inverse(w)
  \end{align*}
  for
  \(f\in\Contc(\base[T]), h\in X_{12}\times_{\base[G]_3}X_3, k\in
  \Contc(X_{123}), p\in X_{12}\times_{\base[G]_3}X_3, q\in X_{(12)3}\)
  and \(w \in X_{(12)3}\). And define \(A_{23},A_{1(23)}',A_{1(23)}\)
  and \(A_*''\) analogously using the families of measures
  \(\delta_{X_1}\times\alpha_{23},[\alpha_2]',\alpha_{1(23)}\) and the
  homeomorphism \(a''\).
\end{definition}
With the functions in Definition~\ref{def:left-vertical-measure}, we
draw Figure~\ref{fig:asso-measures}.
\begin{figure}[htb]
  \[
    \begin{tikzcd}[column sep=10pt]
      & & \Contc(\base[T]) \ar[dl, "A_{12}"'] \ar[ddll,"{A_{(12)3}}"',
      bend right = 45] \ar[dd,"A_{123}"] \ar[dr, "A_{23}"]
      \ar[ddrr,"{A_{1(23)}}", bend left = 45] & &
      \\
      & \Contc(X_{12}\times_{\base[G]_3}X_3) \arrow[ld, "A'_{(12)3}"']
      & & \Contc(X_1\times_{\base[G]_2}X_{23}) \arrow[rd, "A'_{1(23)}"] &\\
      \Contc(X_{(12)3}) & & \Contc(X_{123}) \ar[ll,"A'_*"']
      \ar[rr,"A''_*"] & & \Contc(X_{1(23)})
    \end{tikzcd}
  \]
  \caption{}
  \label{fig:asso-measures}
\end{figure}

\begin{lemma}
  \label{lem:left-triangle-asso-vertical-measures}
  The maps \(A_*'\) and \(A_*''\) are isomorphisms of complex vector
  spaces. And \(A_{(12)3}=A_*'\circ A_{123}\) and
  \(A_{1(23)}=A_*''\circ A_{123}\).
\end{lemma}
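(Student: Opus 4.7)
The plan is to handle the two assertions separately. For the first assertion, the map \(A'_*\) is simply the pullback along the homeomorphism \({a'}\inverse\colon X_{(12)3}\to X_{123}\), that is \(A'_*(k)=k\circ {a'}\inverse\). Since \(a'\) is a homeomorphism (established in part~v) of the proof of Theorem~\ref{thm:proof-of-bicategory-of-top-corr}) and it is proper (hence sends \(\Contc\)-functions to \(\Contc\)-functions), this pullback is a bijective \(\C\)\nb-linear map with inverse \(k\mapsto k\circ a'\). The argument for \(A''_*\) is identical, using \(a''\).

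For the second assertion I would fix \(f\in\Contc(\base[T])\) and a point \(q=[[x_1,x_2],x_3]\in X_{(12)3}\), and unfold the right-hand side
\[
  A'_{*}\circ A_{123}(f)(q)=A_{123}(f)({a'}\inverse(q))=A_{123}(f)([x_1,x_2,x_3])=\alpha_{123,[x_1,x_2,x_3]}(f).
\]
Because the diagonal action of \(G_2\times G_3\) on \(\base[T]\) is the product of the diagonal \(G_2\)-action (on the first two coordinates) and the diagonal \(G_3\)-action (on the last two coordinates), the definition of \(\alpha_{123}\) (as in~(2) on page~\pageref{page:compo-rev-2}) gives
\[
  \alpha_{123,[x_1,x_2,x_3]}(f)=\int_{G_2\times G_3} f(x_1\gamma,\gamma\inverse x_2\eta,\eta\inverse x_3)\,\dd(\alpha_2\times\alpha_3)(\gamma,\eta).
\]
On the other hand, unfolding \(A_{(12)3}=A'_{(12)3}\circ A_{12}\) and using the description \([x_1,x_2]\eta=[x_1,x_2\eta]\) of the right \(G_3\)-action on \(X_{12}\), I obtain
\[
  A_{(12)3}(f)(q)=\int_{G_3}\int_{G_2} f(x_1\gamma,\gamma\inverse x_2\eta,\eta\inverse x_3)\,\dd\alpha_2(\gamma)\,\dd\alpha_3(\eta).
\]
Applying Fubini's theorem to the product Haar system \(\alpha_2\times\alpha_3\) identifies the two displayed expressions, and this is what I would check as the key calculation. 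The argument for \(A_{1(23)}=A''_*\circ A_{123}\) is entirely parallel, writing \(q=[x_1,[x_2,x_3]]\), unfolding the right-then-left iterated integral through \([\alpha_2]'\) and \(\delta_{X_1}\times[\alpha_3]'\), and invoking Fubini again.

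The only non-routine point is the application of Fubini: one has to know that for \(f\in\Contc(\base[T])\) the iterated integral defining \(A_{(12)3}(f)\) coincides with the product-measure integral. This follows from the fact that \((\alpha_2\times\alpha_3)\) is a Haar system on \(G_2\times G_3\) built precisely as the product family, and the integrand has compact support on the fibre product, so the standard continuous-family version of Fubini (as used throughout~\cite{Holkar2017Composition-of-Corr}) applies. Everything else is an unfolding of definitions, so I do not expect any further obstacle.
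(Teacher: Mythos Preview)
Your proposal is correct and follows essentially the same approach as the paper's proof: the paper also dispatches the first assertion by noting that \(a'\) and \(a''\) are homeomorphisms, and for the second assertion it performs the same direct unfolding of \(A_{(12)3}(f)([[x,y],z])\) into the iterated \(G_3\)-\(G_2\) integral, identifies it with the product integral over \(G_2\times G_3\) via Fubini, and recognises this as \(A'_*(A_{123}(f))\). Your remark about compact support justifying Fubini is a slight elaboration on what the paper states, but the key idea is identical.
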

\begin{proof}
  Since \(a'\) and \(a''\) are homeomorphisms, \(A_*'\) and \(A_*''\)
  are isomorphisms of complex vector spaces. Then, for
  \(f\in \Contc(\base[T])\) and \([x,y,z]\in X_{123}\),
  \begin{align*}
    A_{(12)3}(f)([[x,y],z]) &=\int_{G_3} A'_{(12)3}(f)([x,y]\eta,\eta\inverse{z})\,\dd\alpha_3^{r_{X_3}}(\eta)\\
                            &=\int_{G_3} \int_{G_2}
                              f(x\gamma,\gamma\inverse{y}\eta,\eta\inverse{z})\,\dd\alpha_2^{r_{X_2}(y)}(\gamma)\,\dd\alpha_3^{r_{X_3}}(\eta)\\
                            &=\int_{G_3 \times G_2}
                              f(x\gamma,\gamma\inverse{y}\eta,\eta\inverse{z})\,\dd(\alpha_2
                              \times \alpha_3)^{(r_{X_2}(y), r_{X_3}(z))}(\gamma, \eta)\\
                            &=A_{123}(f)([x,y,z])=A'_*(A_{123}(f))([[x,y],z]).
  \end{align*}
  The third equality above is due to Fubini's theorem. This shows that
  \((A_{(12)3}=A_*'\circ A_{123}\). The last claim follows from a
  similar computation as above.
\end{proof}
With the help of Lemma~\ref{lem:left-triangle-asso-vertical-measures},
and definitions of \(A_{(12)3}\) and \(A_{1(23)}\), one can see the
Figure~\ref{fig:asso-measures} commutes.

Now we discuss the (families of) measures on the spaces involved in
the left-half of Figure~\ref{fig:asso}. Let \(i=1,2\) and fix
\(u\in\base[G]_{i+2}\). Recall from the discussion about
Equation~\eqref{equ:composition-1-cocycle} on
page~\pageref{equ:composition-1-cocycle} that the measure
\(\lambda_i\times{\lambda_{i+1}}_u\) on
\(X_i\times_{\base[G]_{i+1}} X_{i+1}\) is
\((G_{i+1},\alpha_{i+1})\)\nb-quasi-invariant; the 1\nb-cocycle
\(D_{i}\) on the transformation groupoid
\((X_i\times_{\base[G]_{i+1}}X_{i+1}) \rtimes G_{i+1}\) that
implements the quasi-invariance and is given by
~Equation~\eqref{equ:composition-1-cocycle}; in this case it is

\begin{equation*}
  D_{i+1}(x_i,x_{i+1},\gamma)=\Delta_{i+1}(\gamma\inverse, x_{i+1})
\end{equation*}
where
\((x_i,x_{i+1},\gamma) \in (X_{i} \times_{\base[G]_{i+1}} X_{i+1})
\rtimes G_{i+1}\) and \(\Delta_{i+1}\) is the adjoining function of
the correspondence \((X_{i+1},\lambda_{i+1})\).

Continuing the above discussion, we know that \(\base[T]\) is a proper
\(G_{i+1}\)\nb-space for an appropriate diagonal action. Now we notice
that the family of measures
\(\{\lambda_1\times\lambda_2\times {\lambda_3}_v\}_{v\in\base[G]_4}\)
on \(\base[T]\) is \((G_{i+1},\alpha_{i+1})\)-quasi-invariant with the
function

\begin{equation}
  \label{eq:T-zero-G-i-q-inv}
  (x_1,x_2,x_3,\gamma)\mapsto D_{i+1}(x_{i},x_{i+1},\gamma)=\Delta_{i+1}(\gamma\inverse,
  x_{i+1}),\quad \base[T]\rtimes G_{i+1}\to \R^+
\end{equation}
as the modular function. Moreover, if we focus on the case \(i=1\),
\[
  b_{12}\times \Id_{X_3}\colon \base[T]\to \R^+
\]
is the 0-cocycle for which the measure
\(b_{12}\times
\Id_{X_3}\cdot(\lambda_1\times\lambda_2\times{\lambda_3}_u)\) is
\(G_2\)-invariant where \(u\in\base[G]_4\). This invariant measure
induces the measure \(\mu_{12}\times{\lambda_3}_u\) on
\(X_{12}\times_{\base[G]_2}X_3\). The action of \(G_3\) on
\(\base[T]\) induces a proper diagonal action on
\(X_{12}\times_{\base[G]_3}X_3\). This is a direct computation. Now
the measure \(\mu_{12}\times{\lambda_3}_u\) is, in turn,
\((G_3,\alpha_3)\)-quasi-invariant with the function
\[
  D_{(12)3}'\colon ([x,y],z,\eta)\mapsto \Delta_3(\eta\inverse,z),
  \quad (X_{12}\times_{\base[G]_3} X_3)\rtimes G_3\to \R^+
\]
as the modular function; this is a direct computation that uses
\((G_3,\alpha_3)\)\nb-quasi-invariance of the family of measures
\(\lambda_3\) on \(X_3\).

Let \(b_{(12)3}'\) be a 0-cocycle on the transformation groupoid
\((X_{12}\times_{\base[G]_3}X_3)\rtimes G_3\) such that
\[
  d^0(b_{(12)3}')=D'_{(12)3}
\]
Then \(b_{(12)3}''(\mu_2\times\lambda_3)\) is a \(G_3\)\nb-invariant
family of measures. This family of measures induces the family of
measures \(\mu_{(12)3}'\) on \(X_{(12)3}\) that so that
\((X_{(12)3},\mu_{(12)3}')\colon (G_1,\alpha_1)\to (G_4,\alpha_4)\) is
a topological correspondence.

All the claims above follow from a direct computation. In the next
lemma and what succeeds it, we discuss the measures on \(X_{123}\)
using the map \(\pi_{123}\).
\begin{lemma}[For associativity isomorphism]
  \label{lem:cocycles-on-T}
  \begin{enumerate}[(i)]
  \item For \(u\in \base[G]_4\), the measure
    \(\lambda_1\times\lambda_2\times{\lambda_3}_u\) on \(\base[T]\) is
    \((T,\alpha_2\times \alpha_3)\)\nb-quasi-invariant. The 1-cocycle
    \(D\) on \(T\) that implements the quasi-invariance is given by
    \( D(x,y,z,\gamma,\eta)\defeq D_2(x,y,\gamma)
    D_{(12)3}'([x,y],z,\eta)\) where \(D_1\) and \(D_2\) are defined
    in above discussion.
  \item The map \(B'\colon \base[T]\to \R^+\) given by
    \(B'\colon (x,y,z)\mapsto b_{12}(x,y)b_{(12)3}'([x,y],z) \) is a
    0\nb-cocycle on \(T\) with \(d^0(B')=D\).
  \end{enumerate}
\end{lemma}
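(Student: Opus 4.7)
The plan is to derive both claims by direct computation, exploiting that the actions of $G_2$ and $G_3$ on $\base[T]$ commute and touch disjoint pairs of coordinates in $\base[T] = X_1 \times_{\base[G]_2} X_2 \times_{\base[G]_3} X_3$.

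For part~(i), I would unfold the quasi-invariance condition for $\lambda_1 \times \lambda_2 \times {\lambda_3}_u$ under $(T, \alpha_2 \times \alpha_3)$ and use Fubini to split the integral over $G_2 \times G_3$ into nested integrals over $G_2$ and $G_3$. The first application of quasi-invariance (for the $G_2$-integral, holding $(z, \eta)$ fixed) invokes the already-recorded fact from the recap around Equation~\eqref{equ:composition-1-cocycle} that $\lambda_1 \times \lambda_2$ is $(G_2, \alpha_2)$-quasi-invariant with modular function $\Delta_2(\gamma\inverse, y)$; this produces the factor $D_2(x,y,\gamma)$. The second application (for the $G_3$-integral, holding the outer variables fixed) uses the analogous $(G_3, \alpha_3)$-quasi-invariance of $\lambda_2 \times {\lambda_3}_u$ with modular function $\Delta_3(\eta\inverse, z)$, giving the factor $D'_{(12)3}([x,y], z, \eta)$. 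The 1-cocycle property of the product $D = D_2 \cdot D'_{(12)3}$ then follows automatically, since it is the modular function of a quasi-invariant measure (though one can equally verify it directly, as each factor is a 1-cocycle on the corresponding sub-transformation-groupoid).

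For part~(ii), the plan is a direct expansion of $d^0(B') = (B'\circ s)/(B'\circ r)$ on $T$, using $r((x,y,z),(\gamma,\eta)) = (x,y,z)$ and $s((x,y,z),(\gamma,\eta)) = (x\gamma, \gamma\inverse y \eta, \eta\inverse z)$. After substituting the product form of $B'$, the ratio splits as
\[
\frac{b_{12}(x\gamma, \gamma\inverse y \eta)}{b_{12}(x, y)} \cdot \frac{b'_{(12)3}([x\gamma, \gamma\inverse y \eta], \eta\inverse z)}{b'_{(12)3}([x, y], z)}.
\]
The pivotal reduction is that $b_{12}$ was chosen to lie in $\CC^0_{G_3}((X_1\times_{\base[G]_2}X_2)\rtimes G_2, \R^*_+)$ and so is $G_3$-invariant, giving $b_{12}(x\gamma, \gamma\inverse y \eta) = b_{12}(x\gamma, \gamma\inverse y)$; the first quotient therefore collapses to $d^0(b_{12})(x,y,\gamma) = D_2(x,y,\gamma)$. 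For the second quotient, the identity $[x\gamma, \gamma\inverse y \eta] = [x,y]\eta$ in $X_{12}$ reduces it to $d^0(b'_{(12)3})([x,y], z, \eta) = D'_{(12)3}([x,y], z, \eta)$; multiplying recovers $D$.

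The main obstacle is essentially bookkeeping. In~(i), care is required when reordering integrals and in recognising that $G_2$-quasi-invariance leaves the $(z,\eta)$-variables inert, and symmetrically for $G_3$. In~(ii), the delicate input is the $G_3$-invariance of $b_{12}$ coming from $b_{12} \in \CC^0_{G_3}(\cdots)$; without this, the first quotient would not factor cleanly through $d^0(b_{12})$. Once these two observations are secured, the rest is a formal computation.
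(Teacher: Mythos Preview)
Your proposal is correct and follows essentially the same route as the paper: for~(i) the paper unfolds the quasi-invariance integral, applies Fubini, and uses the $(G_2,\alpha_2)$-quasi-invariance of $\lambda_1\times\lambda_2$ together with the $(G_3,\alpha_3)$-quasi-invariance of $\lambda_2\times\lambda_3$ to extract the two factors; for~(ii) it performs exactly the ratio computation you outline, invoking the $G_3$-invariance of $b_{12}$ and the identity $[x\gamma,\gamma\inverse y\eta]=[x,y]\eta$ to reduce each quotient to the appropriate $d^0$. Your identification of the $G_3$-invariance of $b_{12}$ (from $b_{12}\in\CC^0_{G_3}$) as the pivotal point is precisely what the paper singles out.
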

\begin{proof}
  (i): Fix \(u\in\base[G]_4\), and let \(f\in\Contc(T)\). Then
  \begin{multline*}
    \int_{\base[T]}\int_T f(x,y,z,\gamma,\eta)
    \;\dd(\alpha_2\times{\alpha_3})^{(x,y,z)}(\gamma,\eta)\,\dd(\lambda_1\times\lambda_2\times{\lambda_3}_u)(x,y,z)\\
    = \int_{X_3}\int_{X_2}\int_{X_1}\int_{G_3}\int_{G_2}
    f(x,y,z,\gamma,\eta) \,\dd{\alpha_2}^{r_{X_2}(y)}(\gamma)
    \dd{\alpha_3}^{r_{X_3}(z)}(\eta))\\
    \dd\lambda_1^{r_{X_2}(y)}(x)\,\dd\lambda_2^{r_{X_3}(z)}(y)\,\dd\lambda_3^u(z)\\
    = \int_{X_3}\int_{X_2}\int_{X_1}\int_{G_3}\int_{G_2} f(x\gamma,
    \gamma\inverse y\eta, \eta\inverse z
    ,\gamma\inverse,\eta\inverse)\, \,\Delta_{2}(\gamma\inverse,
    \gamma y)\Delta_{3}(\eta\inverse,
    \gamma\inverse z\eta)\\
    \dd{\alpha_2}^{r_{X_2}(y)}(\gamma) \dd{\alpha_3}^{r_{X_3}(z)}(\eta))\,\dd\lambda_1^{r_{X_2}(y)}(x)\lambda_2^{r_{X_3}(z)}(y)\lambda_3^u(z)\\
    = \int_{\base[T]}\int_T f(x\gamma\inverse, \gamma y\eta\inverse,
    \eta z,\gamma,\eta) \, \Delta_{12}(x\gamma\inverse,
    \gamma)\Delta_{23}(\gamma y\eta\inverse, \eta)
    \\\dd(\alpha_2\times{\alpha_3}^{(x,y,z)}(\gamma,\eta))\,
    \dd(\lambda_1\times\lambda_2\times\lambda_3)(x,y,z)\\
    = \int_{\base[T]}\int_T f(x\gamma\inverse, \gamma y\eta\inverse,
    \eta z,\gamma,\eta) \, D(x\gamma\inverse, \gamma y\eta\inverse,
    \eta z,\gamma,\eta)
    \\\dd(\alpha_2\times{\alpha_3}^{(x,y,z)}(\gamma,\eta))\,
    \dd(\lambda_1\times\lambda_2\times\lambda_3)(x,y,z)
  \end{multline*}
  where the third equality is due to a direct computation that usages
  Fubini's theorem and \((G_2,\alpha_2)\)\nb-quasi-invariant of
  \(\lambda_1\times \lambda_2\), and Fubini's theorem and
  \((G_3,\alpha_3)\)\nb-quasi-invariant of
  \(\lambda_2\times \lambda_3\), and Fubini's theorem. This shows that
  the function \(D\) is the desired 0-cocycle.
  
  \noindent (ii): This claim follows from a direct computation: for
  \((x,y,z,\gamma,\eta)\in T\),
  \begin{multline*}
    d^0(B')(x,y,z,\gamma,\eta)\defeq\frac{B'\circ s_T
      (x,y,z,\gamma,\eta)}{ B'\circ r_T
      (x,y,z,\gamma,\eta)}=\frac{B'(x\gamma,\gamma\inverse
      y\eta,\eta\inverse z)}{B'(x,y,z)}\\
    =\frac{b_{12}(x\gamma,\gamma \inverse
      y\eta)}{b_{12}(x,y)}\;\frac{b_{(12)3}'([x\gamma, \gamma \inverse
      y\eta],\eta\inverse z)}{b_{(12)3}'([x,y],z)}
    =\frac{b_{12}(x\gamma,\gamma \inverse
      y)}{b_{12}(x,y)}\;\frac{b_{(12)3}'([x,y]\eta,\eta\inverse
      z)}{b_{(12)3}'([x,y],z)}\\
    =\frac{D_{2}\circ s_{(X_1\times_{\base[G]_2} X_2)\rtimes G_2} (x,
      y, \gamma)}{D_{2}\circ r_{(X_1\times_{\base[G]_2} X_2)\rtimes
        G_2} (x, y, \gamma)}\;\; \frac{D_{(12)3}'\circ
      s_{(X_{12}\times_{\base[G]_3} X_3)\rtimes G_3} ([x,y],z,
      \eta)}{D_{(12)3}'\circ r_{(X_{12}\times_{\base[G]_3} X_3)\rtimes
        G_3}([x,
      y],z,\eta)}\\
    = \Delta_{2}(\gamma\inverse,y)\Delta_{(12)3}([x,y],z\eta) =
    D(x,y,z,\gamma,\eta).
  \end{multline*}
  We use the \(G_3\)\nb-invariance of \(b_{12}\) and
  \(G_2\)\nb-invariance of \(b_{23}\) to get the fourth equality.
\end{proof}

To see what does the analogue of above lemma say regarding the
right-half of Figure~\ref{fig:asso}, we continue the discussion that
follows Equation~\eqref{eq:T-zero-G-i-q-inv} for \(i=2\). In this
case,
\[
  \Id_{X_1}\times b_{23}\colon \base[T]\to \R^+
\]
is the 0-cocycle for which the measure
\(\Id_{X_1} \times b_{23}
\cdot(\lambda_1\times\lambda_2\times{\lambda_3}_u)\) is
\(G_2\)-invariant where \(u\in\base[G]_4\). This invariant measure
induces the measure \({\lambda_1}\times {\mu_{23}}_u\) on
\(X_{1}\times_{\base[G]_2}X_{23}\). The action of \(G_2\) on
\(\base[T]\) induces a proper diagonal action on
\(X_{1}\times_{\base[G]_2}X_{23}\). The measure
\(\lambda_1\times {\mu_{12}}_u\) is, in turn,
\((G_2,\alpha_2)\)-quasi-invariant with the function
\[
  D_{1(23)}''\colon (x,[y,z],\eta)\mapsto \Delta_2(\eta\inverse,y),
  \quad (X_{1}\times_{\base[G]_2} X_{23})\rtimes G_2\to \R^+
\]
as the modular function; recall
from~\cite{Holkar2017Construction-of-Corr}*{Remark 2.5} that
\(\Delta_2\) is \(G_3\)\nb-invariant due to which \(D_{1(23)}''\) is
well-defined.  Now fix a 0-cocycle \(b_{(12)3}''\) on the
transformation groupoid
\((X_{1}\times_{\base[G]_2}X_{23})\rtimes G_2\) with the property that
\[
  d^0(b_{1(23)}')=D'_{1(23)}.
\]
Then \(b_{1(23)}''(\lambda_1\times \mu_2)\) is a \(G_2\)\nb-invariant
family of measures. This family of measures induces the family of
measures \(\mu_{1(23)}'\) on \(X_{1(23)}\) that so that
\((X_{1(23)},\mu_{1(23)}')\colon (G_1,\alpha_1)\to (G_4,\alpha_4)\) is
a topological correspondence. Now the analogue of
Lemma~\ref{lem:cocycles-on-T} can be stated as
\begin{lemma}
  \label{lem:R-cocycles-on-T}
  \begin{enumerate}[(i)]
  \item For \(u\in \base[G]_4\), the measure
    \(\lambda_1\times\lambda_2\times{\lambda_3}_u\) on \(\base[T]\) is
    \((T,\alpha_2\times \alpha_3)\)\nb-quasi-invariant. The 1-cocycle
    \(D_R\) on \(T\) that implements the quasi-invariance is given by
    \( D_R(x,y,z,\gamma,\eta)\defeq D_{1(23)}(x,[y,z],\gamma)
    D_{3}(y,z,\eta)\).
  \item The map \(B''\colon \base[T]\to \R^+\) given by
    \(B''\colon (x,y,z)\mapsto b_{23}(y,z)b_{1(23)}''(x,[y,z]) \) is a
    0\nb-cocycle on \(T\) with \(d^0(B'')=D_R\).
  \end{enumerate}
\end{lemma}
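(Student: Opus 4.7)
The argument should be the right-parenthesised mirror of Lemma~\ref{lem:cocycles-on-T}; the plan is to follow that proof step-for-step, but with the roles of \(G_2\) and \(G_3\) interchanged so that the \(G_3\)-integration is performed on the outside and the \(G_2\)-integration on the inside.

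For (i), I would test \((T, \alpha_2 \times \alpha_3)\)-quasi-invariance of \(\lambda_1 \times \lambda_2 \times {\lambda_3}_u\) against an arbitrary \(f \in \Contc(T)\). Applying Fubini's theorem to bring the \(G_3\)-integration outside (coupled with \(\lambda_3\)) and the \(G_2\)-integration inside (coupled with \(\lambda_1\) and \(\lambda_2\)), I would first invoke the \((G_3, \alpha_3)\)-quasi-invariance of \(\lambda_2 \times \lambda_3\) --- which is Equation~\eqref{equ:composition-1-cocycle} with \(i = 2\) --- to implement the substitution \((y, z) \mapsto (y\eta, \eta\inverse z)\), producing the factor \(\Delta_3(\eta\inverse, z) = D_3(y, z, \eta)\). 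Since \(\Delta_2\) is \(G_3\)-invariant (Remark~2.5 in~\cite{Holkar2017Construction-of-Corr}), the \((G_2, \alpha_2)\)-quasi-invariance of \(\lambda_1 \times \lambda_2\) can then be applied to the transformed coordinate \((x, y\eta)\), producing the factor \(\Delta_2(\gamma\inverse, y\eta) = \Delta_2(\gamma\inverse, y) = D''_{1(23)}(x, [y, z], \gamma)\). Multiplying gives the 1-cocycle \(D_R\) as claimed.

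For (ii), a direct cocycle computation mirroring Lemma~\ref{lem:cocycles-on-T}(ii) gives
\begin{equation*}
d^0(B'')(x, y, z, \gamma, \eta) \;=\; \frac{b_{23}(\gamma\inverse y\eta,\, \eta\inverse z)}{b_{23}(y, z)} \cdot \frac{b''_{1(23)}(x\gamma,\, [\gamma\inverse y\eta,\, \eta\inverse z])}{b''_{1(23)}(x,\, [y, z])}.
\end{equation*}
The first ratio simplifies by \(G_2\)-invariance of \(b_{23}\) to \(d^0(b_{23})(y, z, \eta) = D_3(y, z, \eta)\); for the second, the \(G_3\)-quotient identity \([\gamma\inverse y\eta, \eta\inverse z] = [\gamma\inverse y, z]\) in \(X_{23}\) reduces the numerator to \(b''_{1(23)}(x\gamma, \gamma\inverse [y, z])\), whence the ratio equals \(d^0(b''_{1(23)})(x, [y, z], \gamma) = D''_{1(23)}(x, [y, z], \gamma)\). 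Combining, \(d^0(B'') = D''_{1(23)} \cdot D_3 = D_R\).

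The whole computation is routine given the already-completed left-half proof; the one point requiring care is the interaction of the two groupoid actions in (i), handled precisely by the \(G_3\)-invariance of \(\Delta_2\), and the analogous interaction in (ii), handled by the \(G_2\)-invariance of \(b_{23}\) together with the collapse of the \(G_3\)-equivalence class in \(X_{23}\). No new technical ingredient beyond those already marshalled for Lemma~\ref{lem:cocycles-on-T} is needed.
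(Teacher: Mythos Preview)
Your proposal is correct and is precisely the mirror argument the paper intends: the paper's own proof reads in its entirety ``Similar to that of Lemma~\ref{lem:cocycles-on-T},'' and you have supplied exactly that, correctly swapping the order of the \(G_2\)- and \(G_3\)-integrations and invoking the \(G_3\)-invariance of \(\Delta_2\) (for well-definedness of \(D''_{1(23)}\)) and the \(G_2\)-invariance of \(b_{23}\) together with the \(G_3\)-quotient identity in \(X_{23}\) for part~(ii). The paper itself flags ``\(G_2\)-invariance of \(b_{23}\)'' in the proof of Lemma~\ref{lem:cocycles-on-T}(ii), so your use of it is in line with the paper's conventions.
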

\begin{proof}
  Similar to that of Lemma~\ref{lem:cocycles-on-T}.
\end{proof}

\begin{remark}
  Since the cocycles \(D\) and \(D_R\) in
  Lemmas~\ref{lem:cocycles-on-T} and~\ref{lem:R-cocycles-on-T} are
  modular functions of the measure
  \(\lambda_1\times\lambda_2\times\lambda_3\) on the unit space of the
  locally compact groupoid \(T\) equipped with the Haar system
  \(\alpha_1\times\alpha_2\), we have \(D=D_R\)
  \(\lambda_1\times\lambda_2\times\lambda_3\)-almost everywhere on
  \(\base[T]\). But both \(D\) and \(D_R\) are continuous. Therefore,
  \(D=D_R\). Hereon, we shall use \(D\) for this cocycle. In fact, one
  can show that \(D(x,y,z,\gamma,\eta)=D_2(x,y,\gamma)D_3(y,z,\eta)\)
  and \(P\colon \base[T]\to \R^+\) defined by
  \(P(x,y,z)=b_{12}(x,y)b_{23}(y,z)\) is a 0-cocycle with
  \(d^0(P)=D\).
\end{remark}

Now we fix a 0-cocycle \(B\) on \(T\) with the property that
\(d^0(B)=D\); use the same cocycle while working with the right-half
of Figure~\ref{fig:asso}. Then Lemma~\ref{lem:cocycles-on-T} and
Proposition~{3.1}{(i)} in~\cite{Holkar2017Composition-of-Corr} gives
us a family of measures
\(\mu_{123}\defeq \{{\mu_{123}}_u\}_{u\in\base[G]_4}\) with the
property that
\begin{equation}\label{eq:measure-on-X-123} {\mu_{123}}_u(A_{123}(f))=
  \left(B\cdot
    (\lambda_1\times\lambda_2\times{\lambda_3}_u)\right)(f)
\end{equation}

for all \(u\in\base[G]_4\). Additionally, since \(d^0(B')=d^0(B)=D\),
Remark~{1.1.4} in~\cite{Holkar2017Construction-of-Corr} says that the
function \(B'/B\) (or \(B''/B\)) is constant on the \(T\)\nb-orbits of
\(\base[T]\). Thus \(B'/B\) (or \(B''/B\)) induces a continuous
function on \(\base[T]\7T=X_{123}\) which we denote by \([B'/B]\) (or
\([B''/B]\), respectively).
\begin{lemma}
  \label{lem:left-triangle-asso-top-measures}
  The families of measures \({a'}\inverse_*\mu_{(12)3}\) and
  \(\mu_{123}\) on \(X_{123}=\base[T]/T\) are equivalent, and
  \([B'/B]\) is the Radon-Nikodym derivative
  \(\dd {a'}\inverse_*{\mu_{(12)3}}_u/\dd{\mu_{123}}_u\) where
  \(u\in\base[G]_4\).
\end{lemma}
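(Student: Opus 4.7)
The plan is to show that both \({a'}\inverse_*\mu_{(12)3,u}\) and \(\mu_{123,u}\) arise from the same \((T,\alpha_2\times\alpha_3)\)\nb-quasi-invariant base measure \(\lambda_1\times\lambda_2\times\lambda_{3,u}\) on \(\base[T]\), rescaled by the two \(0\)\nb-cocycles \(B'\) and \(B\) with \(d^0(B')=d^0(B)=D\). Since the ratio \(B'/B\) is \(T\)\nb-invariant on \(\base[T]\) (as explained in the excerpt just preceding the lemma), it descends to a positive continuous function \([B'/B]\) on \(X_{123}\), and this will be the desired Radon-Nikodym derivative.

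Concretely, I would first prove the analogue of Equation~\eqref{eq:measure-on-X-123} for \(\mu_{(12)3}\): for \(f\in\Contc(\base[T])\),
\[
\mu_{(12)3,u}(A_{(12)3}(f)) = \int_{\base[T]} B'\cdot f\;\dd(\lambda_1\times\lambda_2\times\lambda_{3,u}).
\]
This is obtained by unfolding the composite measure in two stages. Since \(\mu_{(12)3}\) is the composite of \((X_{12},\mu_{12})\) and \((X_3,\lambda_3)\) built from the cocycle \(b_{(12)3}'\), Equation~\eqref{eq:composite-measure} yields
\[
\mu_{(12)3,u}(A'_{(12)3}(h)) = \int_{X_{12}\times_{\base[G]_3}X_3} b_{(12)3}'\cdot h\;\dd(\mu_{12}\times\lambda_{3,u})
\]
for \(h\in\Contc(X_{12}\times_{\base[G]_3}X_3)\). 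Taking \(h=A_{12}(f)\), so that \(A'_{(12)3}(A_{12}(f))=A_{(12)3}(f)\) by Lemma~\ref{lem:left-triangle-asso-vertical-measures}, then applying Fubini together with the analogous disintegration \(\mu_{12,v}\circ[\alpha_2] = b_{12}\cdot(\lambda_1\times\lambda_{2,v})\) (valid for each \(v=r_{X_3}(z)\)), and invoking the \(G_2\)\nb-invariance of \(b_{(12)3}'\) to commute it past the \(\alpha_2\)-integration, collapses the iterated integral into a single integral against the product cocycle \(B'(x,y,z)=b_{12}(x,y)\,b_{(12)3}'([x,y],z)\) over \(\base[T]\).

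Given this identity and Equation~\eqref{eq:measure-on-X-123}, the rest is formal. Writing \(B' = (B'/B)\cdot B\) with \(B'/B = [B'/B]\circ\pi_{123}\), linearity of the fibrewise integration \(\alpha_{123}\) yields \(A_{123}((B'/B)\cdot f) = [B'/B]\cdot A_{123}(f)\), whence
\[
{a'}\inverse_*(\mu_{(12)3,u})(A_{123}(f)) = \mu_{(12)3,u}(A_{(12)3}(f)) = \mu_{123,u}([B'/B]\cdot A_{123}(f)),
\]
where the first equality uses \(A_{(12)3}(f) = A_{123}(f)\circ{a'}\inverse\) from Lemma~\ref{lem:left-triangle-asso-vertical-measures}. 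Surjectivity of \(A_{123}\colon\Contc(\base[T])\to\Contc(X_{123})\) --- standard for proper groupoid actions via a cutoff function for \(T\) --- promotes this identity to arbitrary test functions on \(X_{123}\), giving the equivalence \({a'}\inverse_*(\mu_{(12)3,u})\sim\mu_{123,u}\) with Radon-Nikodym derivative \([B'/B]\).

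The main obstacle I foresee is the first step: chaining the two separate applications of~\eqref{eq:composite-measure} (one for \(\mu_{(12)3}\) disintegrating over \(X_{12}\times_{\base[G]_3}X_3\), another for \(\mu_{12}\) disintegrating over \(X_1\times_{\base[G]_2}X_2\)) so that the cutoff functions that appear in each application absorb correctly and leave only the product cocycle \(B'\) against \(\lambda_1\times\lambda_2\times\lambda_{3,u}\). Once this Fubini bookkeeping is done, the descent of \(B'/B\) and the identification through \(a'\) are routine.
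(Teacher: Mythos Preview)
Your proposal is correct and follows essentially the same route as the paper: establish the disintegration identity \(\mu_{(12)3,u}(A_{(12)3}(f))=\int_{\base[T]}B'\,f\,\dd(\lambda_1\times\lambda_2\times{\lambda_3}_u)\) by unfolding the two stages \(\mu_{(12)3,u}\circ A'_{(12)3}=b'_{(12)3}\cdot(\mu_{12}\times{\lambda_3}_u)\) and \(\mu_{12}\circ A_{12}=b_{12}\cdot(\lambda_1\times\lambda_2)\), then compare with the defining relation~\eqref{eq:measure-on-X-123} for \(\mu_{123}\). The only differences are cosmetic: the paper concludes by invoking Lemma~\ref{lem:equi-measures-on-quotient}(iii) to descend the Radon--Nikodym derivative \(B'/B\) to \([B'/B]\) on the quotient, whereas you do this descent by hand via surjectivity of \(A_{123}\); both are fine. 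Your anticipated obstacle with cutoff functions is not an issue: rather than the explicit formula~\eqref{eq:composite-measure} (which involves a cutoff), use directly the disintegration identity \(\mu\circ[\alpha]=b\cdot m\) from Proposition~3.1 of~\cite{Holkar2017Composition-of-Corr}, in which no cutoff appears --- this is exactly what the paper does, and the two-stage chaining then collapses cleanly to the product cocycle \(B'\).
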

\begin{proof}
  We claim that the measure \({a'}_*\inverse({\mu_{(12)3}}_u)\) on
  \(X_{123}\) disintegrates the measure
  \(B'\cdot\lambda_1\times\lambda_2\times{\lambda_3}_u\)
  \[ {a'}_*\inverse({\mu_{(12)3}}_u)\circ A_{123}
    B'\cdot\lambda_1\times\lambda_2\times{\lambda_3}_u.
  \]
  Therefore, \(B'\cdot\lambda_1\times\lambda_2\times{\lambda_3}_u\) is
  a \((T,\alpha_2\times\alpha_3)\)-invariant measure on \(\base[T]\)
  (see Proposition 3.1(i) in~\cite{Holkar2017Composition-of-Corr}). We
  already know that
  \(B\cdot\lambda_1\times\lambda_2\times{\lambda_3}_u\) is also such
  an invariant measure on \(\base[T]\), see
  Equation~\eqref{eq:measure-on-X-123} and the discussion preceding
  it. Moreover, since \(B\) and \(B'\) are \(\R^+\)\nb-valued
  cocycles, \(B'\cdot\lambda_1\times\lambda_2\times{\lambda_3}_u\) and
  \(B\cdot\lambda_1\times\lambda_2\times{\lambda_3}_u\) are equivalent
  measures with the Radon-Nikodym derivative
  \[
    \frac{\dd\;
      B'\cdot\lambda_1\times\lambda_2\times{\lambda_3}_u}{\dd\;
      B\cdot\lambda_1\times\lambda_2\times{\lambda_3}_u} =
    \frac{B'}{B}.
  \]
  Now we apply Lemma~\ref{lem:equi-measures-on-quotient} to these two
  invariant measures on \(\base[T]\) to infer the claim of the present
  lemma. Following is the proof regarding disintegration of the
  measure: Let \(f\in\Contc(\base[T])\) and \(u\in\base[G]_4\). Then
  \begin{multline*}
    {a'}\inverse_*{\mu_{(12)3}}_u(A_{123}(f))=\defeq
    {\mu_{(12)3}}_u(A_{123}(f)\circ {a'}\inverse)\\=
    {\mu_{(12)3}}_u(A'_*\circ A_{123}(f))=
    {\mu_{(12)3}}_u(A_{(12)3}(f))
    ={\mu_{(12)3}}_u\left(A_{23}\left(A_{12}(f)\right)\right)
  \end{multline*}
  where the first equality is due to
  Definition~\ref{def:left-vertical-measure}, second one is the
  definition of push-forward of a measures, the second one follows due
  to Lemma~\ref{lem:left-triangle-asso-vertical-measures} and third
  one follows from the definition of \(A_{(12)3}\)
  (cf.\,Definition~\ref{def:left-vertical-measure}). We use the
  disintegration
  \(b_{(12)3}'\cdot\mu_{12}\times{\lambda_3}_u={\mu_{(12)3}}_u\circ
  A_{23}\) of measures along \(\pi_{(12)3}'\) to see that the last
  term above equals
  \begin{align*}
    &b_{(12)3}'\cdot\mu_{12}\times{\lambda_3}_u\left(A_{12}(f)\right)
    \\
    &=\int_{X_3}\int_{X_{12}}  A_{12}(f) ([x,y],z)
      b_{(12)3}' ([x,y],z)\,\dd{\mu_{12}}_{r_{X_3}(z)}([x,y])\,\dd{\lambda_{3}}_u(z)\\
    &=\int_{X_3} {\mu_{12}}_{r_{X_3}(z)}\left(A_{12}(f) 
      b_{(12)3}'\right)\,\dd{\lambda_{3}}_u(z).
  \end{align*}
  Now we use the disintegration of families of measures
  \(\mu_{12}\circ A_{12}=b_{12}\cdot \lambda_1\times\lambda_2\) to see
  that the last term in above equation equals
  \begin{multline*}
    \int_{X_3}\int_{X_1}\int_{X_{2}} f(x,y,z) b_{12}(x,y) b_{(12)3}'
    ([x,y],z)\,\dd{\lambda_1}_{r_{X_2}(y)}(x)\,\dd{\lambda_{2}}_{r_{X_3}(z)}(y)\,\dd{\lambda_{3}}_u(z)\\
    =\int_{X_1}\int_{X_2} \int_{X_3} f(x,y,z) B'(x,y,z)\,
    \dd{\lambda_1}_{r_{X_2}(y)}(x)\,\dd{\lambda_{2}}_{r_{X_3}(z)}(y)\,\dd{\lambda_{3}}_u(z)
  \end{multline*}
  where \(B'\) is the 0-cocycle on \(T\) as in
  Lemma~\ref{lem:cocycles-on-T}.
\end{proof}

\begin{proposition}
  \label{prop:left-measures-equi}
  The families of measures \(a_*(\mu_{(12)3})\) and \(\mu_{1(23)}\) on
  \(X_{1(23)}\) are equivalent, and the Radon-Nikodym derivative
  \( \dd a_*{\mu_{(12)3}}/\dd {\mu_{1(23)}}=[B'/B'']\circ
  {a''}\inverse\).
\end{proposition}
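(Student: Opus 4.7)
The plan is to combine Lemma~\ref{lem:left-triangle-asso-top-measures} with its right-half analogue, and then push forward through $a''$. First, I would state (and prove along the same lines as Lemma~\ref{lem:left-triangle-asso-top-measures}) the symmetric statement that ${a''}^{-1}_*\mu_{1(23)} \sim \mu_{123}$ with Radon-Nikodym derivative
\[
\frac{\dd {a''}^{-1}_* {\mu_{1(23)}}_u}{\dd {\mu_{123}}_u} = [B''/B].
\]
This uses exactly the same disintegration argument through the right half of Figure~\ref{fig:asso}, invoking $A''_* \circ A_{123} = A_{1(23)}$ from Lemma~\ref{lem:left-triangle-asso-vertical-measures} and the fact that $B''$ satisfies $d^0(B'')=D$ (by Lemma~\ref{lem:R-cocycles-on-T} together with the remark identifying $D=D_R$).

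Next, both ${a'}^{-1}_*\mu_{(12)3}$ and ${a''}^{-1}_*\mu_{1(23)}$ are equivalent to the same family $\mu_{123}$ on $X_{123}$, so they are equivalent to each other; by the quotient of the two Radon-Nikodym derivatives,
\[
\frac{\dd {a'}^{-1}_* {\mu_{(12)3}}_u}{\dd {a''}^{-1}_* {\mu_{1(23)}}_u} = \frac{[B'/B]}{[B''/B]} = [B'/B''],
\]
where the last equality holds because both functions descend from $B', B''$ which satisfy $B'/B'' = (B'/B)/(B''/B)$ pointwise on $\base[T]$ and the quotient passes to $X_{123}$.

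Finally, I apply $a''_*$ to both sides and use the identity $a = a'' \circ {a'}^{-1}$. Thus $a''_* ({a'}^{-1}_* \mu_{(12)3}) = a_* \mu_{(12)3}$ and $a''_* ({a''}^{-1}_* \mu_{1(23)}) = \mu_{1(23)}$. By Lemma~\ref{lem:equi-measures-image} applied with the homeomorphism $a''$, the equivalence is preserved, and the Radon-Nikodym derivative transforms by pre-composition with ${a''}^{-1}$, giving
\[
\frac{\dd a_* {\mu_{(12)3}}_u}{\dd {\mu_{1(23)}}_u} = [B'/B''] \circ {a''}^{-1}.
\]

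The main obstacle I anticipate is verifying the right-half analogue of Lemma~\ref{lem:left-triangle-asso-top-measures} cleanly; the bookkeeping of the invariant measures and the disintegration through $\pi'_{1(23)} \circ (\Id_{X_1} \times \pi_{23})$ requires care, but it is structurally identical to the left-half argument. A secondary subtlety is ensuring that $B'/B''$ is indeed $T$-invariant on $\base[T]$ so that $[B'/B'']$ is well-defined on $X_{123}$; this follows because both $B'$ and $B''$ are $0$-cochains satisfying $d^0(B') = d^0(B'') = D$, so $d^0(B'/B'') = 1$, making $B'/B''$ constant on $T$-orbits (cf.~the remark cited in the proof of Lemma~\ref{lem:left-triangle-asso-top-measures}).
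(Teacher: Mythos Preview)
Your proposal is correct and follows essentially the same route as the paper's proof: establish the right-half analogue of Lemma~\ref{lem:left-triangle-asso-top-measures}, use transitivity to compare ${a'}^{-1}_*(\mu_{(12)3})$ and ${a''}^{-1}_*(\mu_{1(23)})$ on $X_{123}$ with derivative $[B'/B'']$, and then push forward by $a''$ using Lemma~\ref{lem:equi-measures-image} (and Lemma~\ref{lem:equi-functoriality}) together with $a=a''\circ {a'}^{-1}$. Your closing remark on why $B'/B''$ descends to $X_{123}$ is exactly the content of Remark~\ref{rem:invariance-of-B/B}.
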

\begin{proof}
  Lemma~\ref{lem:left-triangle-asso-top-measures} shows that
  \({a'}\inverse_*(\mu_{(12)3})\sim\mu_{123}\) on \(X_{123}\) and the
  Radon--Nikodym derivative
  \(\dd {a'}\inverse_*{\mu_{(12)3}}_u/\dd{\mu_{123}}_u=[B'/B]\) for
  \(u\in\base[G]_4\). For the right-half of Figure~\ref{fig:asso}, one
  may prove ---on the similar lines as
  Lemma~\ref{lem:left-triangle-asso-top-measures}--- that
  \({a''}\inverse_*(\mu_{1(23)})\sim\mu_{123}\) on \(X_{123}\) and the
  Radon--Nikodym derivative
  \(\dd {a''}\inverse_*{\mu_{1(23)}}_u/ \dd{\mu_{123}}_u=[B''/B]\) for
  \(u\in\base[G]_4\).

  Now the transitivity of equivalence of measures implies that
  \({a'}\inverse_*(\mu_{(12)3})\sim {a''}\inverse_*(\mu_{1(23)})\) on
  \(X_{123}\), and the function \([B'/B'']\) implements the
  equivalence. Using Lemma~\ref{lem:equi-measures-image}, we see that
  \begin{equation}
    a''_*{a'}\inverse_*(\mu_{(12)3})\sim a''_*{a''}\inverse_*(\mu_{1(23)})
    \quad \text{ on } X_{1(23)}.\label{eq:left-triangle-measure}
  \end{equation}
  Furthermore, Lemma~\ref{lem:equi-functoriality}, implies that
  \(a''_*{a'}\inverse_*(\mu_{(12)3})=(a''\circ{a'}\inverse)_*(\mu_{(12)3})=a_*(\mu_{(12)3})\),
  and, similarly, \(a''_*{a''}\inverse_*(\mu_{1(23)})
  =\mu_{(12)3}\). Therefore, Equation~\eqref{eq:left-triangle-measure}
  says that \(a_*(\mu_{(12)3})\sim\mu_{1(23)}\).  Moreover, due to the
  Chain rule, for each \(u\in\base[G]_4\) the Radon-Nikodym derivative
  \[
    \dd a_*{\mu_{(12)3}}_u/ \dd{\mu_{1(23)}}_u=[B'/B'']\circ
    {a''}\inverse.
  \]
\end{proof}

\begin{remark}
  \label{rem:invariance-of-B/B}
  Since \(B',B''\) and \(a''\) are \(G_4\)\nb-invariant, so are
  \(B'/B'', [B'/B'']\) and \([B'/B'']\circ {a''}\inverse\). The
  0\nb-cocycles \(B\) and \(B'\) on \(\base[T]\) induces well-defined
  function \([B'/B'']\) on \(X_{123}\).
\end{remark}

\subsection{The $\Cst$-bifunctor}
\label{sec:cst-bifunctor}

\begin{lemma}
  \label{lemma:Deltas-of-isomorphic-corr}
  Let \((X,\lambda)\) and \((Y,\tau)\) be topological correspondences,
  with \(\Delta_X\) and \(\Delta_Y\) as the adjoining functions,
  respectively, from \((G,\alpha)\) to \((H,\beta)\). Let
  \(\textup{t}\colon X\rightarrow Y\) be an isomorphism of topological
  correspondences. Let \(G\ltimes X\) denote the transformation
  groupoid for the left action of \(G\) on \(X\).  Let
  \(M\colon Y\to \R\) be the continuous function
  \(M(y)= \frac{\tau_{s_Y(y)}}{\dd\textup{t}_*(\lambda_{s_Y(y)})}
  (y)\).
  \begin{enumerate}[label=\roman*), leftmargin=*]
  \item \(M\) is \(H\)\nb-invariant.
  \item
    \(\Delta_X=\left(M\circ \textup{t}\circ r_{G\ltimes X}/M\circ
      \textup{t} \circ s_{G\ltimes X}\right)\;\Delta_Y\circ(\Id\times
    \textup{t})\).
  \end{enumerate}
\end{lemma}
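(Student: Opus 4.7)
The plan is to handle (i) as a formal consequence of the definition of~\(M\) as a Radon--Nikodym derivative of two \(H\)\nb-invariant families, and (ii) by applying the defining integral identity of Definition~\ref{def:correspondence}(iv) to both \((X,\lambda)\) and \((Y,\tau)\) with carefully coupled test functions and comparing.

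First I would prove (i). Since \(t\) is \(G\)\nb-\(H\)\nb-equivariant and intertwines the momentum maps, \(t_*(\lambda)\) is an \(H\)\nb-invariant family of measures along \(s_Y\) (because \(\lambda\) is along \(s_X\)), and \(\tau\) is also \(H\)\nb-invariant along \(s_Y\) by hypothesis. For \(\eta\in H\) with \(r_H(\eta)=s_Y(y)\), both \(\tau_{r_H(\eta)}\) and \(t_*(\lambda_{r_H(\eta)})\) are transported to the corresponding measures on \(Y_{s_H(\eta)}\) by right multiplication with \(\eta\); therefore the fibre-wise Radon--Nikodym derivative \(M\) is preserved, that is, \(M(y\eta)=M(y)\).

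For (ii), I would fix \(u\in\base[H]\) and arbitrary \(\tilde F\in\Contc(G\times_{\base[G]}X)\), and define \(F\in\Contc(G\times_{\base[G]}Y)\) by \(F(\gamma,y)\defeq\tilde F(\gamma, t\inverse(y))\). Applying Definition~\ref{def:correspondence}(iv) to \((Y,\tau)\) with \(F\), then substituting \(y=t(x)\) via \(\dd\tau_u = M\cdot \dd t_*(\lambda_u)\) and using the \(G\)\nb-equivariance of \(t\) (so \(t(\gamma\inverse x)=\gamma\inverse t(x)\) and \(r_Y\circ t=r_X\)), both sides of that identity become integrals over \(X_u\) against \(\dd\lambda_u\). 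Separately, applying Definition~\ref{def:correspondence}(iv) to \((X,\lambda)\) with the test function \(F'(\gamma,x)\defeq \tilde F(\gamma,x)\,M(t(x))\) produces an identity whose LHS matches that of the translated \(Y\)-equation (because \(M(t(\cdot))\) depends only on the second variable, so it passes through the \(\gamma\mapsto\gamma\inverse\) move) and whose RHS involves \(\Delta_X(\gamma,\gamma\inverse x)\,M(t(\gamma\inverse x))\). Equating the two right-hand sides, which are valid for arbitrary \(\tilde F\), and using continuity of \(\Delta_X,\Delta_Y,M,t\) together with the fact that \(\Contc(G\times_{\base[G]}X)\) separates points against the product family \(\alpha\times\lambda\), I would conclude the pointwise identity
\[
M(t(\gamma\inverse x))\,\Delta_X(\gamma,\gamma\inverse x)=M(t(x))\,\Delta_Y(\gamma,t(\gamma\inverse x))
\]
for every \((\gamma,x)\) with \(s_G(\gamma)=r_X(x)\). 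Reparameterising \(x'\defeq\gamma\inverse x\) and recognising \(r_{G\ltimes X}(\gamma,x')=\gamma x'\), \(s_{G\ltimes X}(\gamma,x')=x'\), and \((\Id\times t)(\gamma,x')=(\gamma,t(x'))\) then delivers the formula in~(ii).

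The main obstacle I anticipate is the passage from the integral identity to the pointwise one: this relies on \(\lambda_u\) having full support on~\(X_u\) and \(\alpha^{r_X(x)}\) having full support on \(G^{r_X(x)}\) (standard consequences of the Haar-system and proper-action setup), so that continuous integrands which agree against all \(\tilde F\) must coincide pointwise. A secondary care point in (i) is bookkeeping the convention for right \(H\)\nb-invariance of \(s\)\nb-systems matched across \(X\) and \(Y\) under \(t\), but this is standard given the \(H\)\nb-equivariance of \(t\) and the conventions established in~\cite{Holkar2017Construction-of-Corr}.
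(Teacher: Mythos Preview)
Your proposal is correct and follows essentially the same route as the paper's proof. For~(i) the paper likewise observes that both \(\tau_u\) and \(t_*(\lambda_u)\) are \(H\)\nb-invariant and invokes Lemma~\ref{lem:equi-measures-on-quotient}; for~(ii) the paper transforms the left-hand side of Definition~\ref{def:correspondence}(iv) for \((X,\lambda)\) by pushing through \(t\), switching between \(t_*(\lambda_u)\) and \(\tau_u\), applying the adjoining identity for \((Y,\tau)\), and comparing---the only cosmetic difference is that the paper runs this as one chain of equalities rather than your two parallel applications with matched test functions, and it passes from the almost-everywhere equality to the pointwise one by the same continuity argument you flag.
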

\begin{proof}
  (i): Fix \(u\in \base[H]\). By definition \(\tau_u\) is invariant
  measures on the space of units of the transformation groupoid
  \(X\rtimes H\) when the transformation groupoid is equipped with the
  Haar system induced by \(\beta\); similar claim holds for
  \(\lambda_u\). Since \textup{t} is \(G\)-\(H\)\nb-equivariant
  homeomorphism, \(\textup{t}_*(\lambda_u)\) is also a
  \(X\rtimes H\)\nb-invariant measure on \(Y\). Also, since t is an
  isomorphism of topological correspondences
  \(\textup{t}_*(\lambda_u)\sim \tau_u\) for all \(u\in\base[H]\).
  Now Lemma~\ref{lem:equi-measures-on-quotient} gives us the desired
  result.

  \noindent (ii): The isomorphism \textup{t} induces the
  \(G\)-\(H\)\nb-equivariant homeomorphism
  \(\Id \times \textup{t}\colon G \times_{\base[G]} X\to
  G\times_{\base[G]} Y\). Let \(f\in \Contc(G\ltimes X)\) and
  \(u\in\base\). Then we may write
  \begin{multline}\label{eq:relation-between-Adj-fns-1}
    \int_X\int_G
    f(\gamma\inverse,x)\,\dd\alpha^{r_X(x)}(\gamma)\,\dd\lambda_u(x)
    \\= \int_X\int_G f(\gamma\inverse,t\inverse(y))
    \,\dd\alpha^{r_Y(y)}(\gamma) \,\dd\lambda_u(\textup{t}\inverse(y))
  \end{multline}
  as \(r_Y(y)=r_X(x)\). The last term can be written as
  \[
    \int_X\int_G f\circ (\Id\times t)\inverse(\gamma\inverse,y)
    \,\dd\alpha^{r_Y(y)}(\gamma) \,\dd\textup{t}_*(\lambda_u)(y).
  \]
  Change the measures \(\textup{t}_*(\lambda_u)\) to \(\tau_u\) makes
  above term equal to
  \[
    \int_X\int_G f\circ (\Id\times t)\inverse (\gamma\inverse,
    y)\,\frac{\dd\textup{t}_*(\lambda_u)}{\dd\tau_u}(y)\,\dd\alpha^{r_Y(y)}(\gamma)
    \,\dd\tau_u(y).
  \]
  Using the \((G,\alpha)\)\nb-quasi-invariance of \(\tau\), we see
  that above term equals
  \[
    \int_X\int_G f\circ (\Id\times t)\inverse (\gamma,\gamma\inverse
    y)\,\Delta_Y(\gamma,\gamma\inverse y)\,
    \frac{\dd\textup{t}_*(\lambda_u)}{\dd\tau_u}
    (y)\,\dd\alpha^{r_Y(y)}(\gamma) \,\dd\tau_u(y).
  \]
  Now change the measures \(\tau_u\) to \(\textup{t}_*(\lambda_u)\) to
  see that above term equals
  \[
    \int_X\int_G f\circ (\Id\times t)\inverse (\gamma,\gamma\inverse
    y)\,\Delta_Y(\gamma,\gamma\inverse y)\,
    \frac{\dd\textup{t}_*(\lambda_u)}{\dd\tau_u}(y)
    \frac{\dd\tau_u}{\dd\textup{t}_*(\lambda_u)} (\gamma\inverse
    y)\,\dd\alpha^{r_Y(y)}(\gamma) \,\dd \textup{t}_*(\lambda_u)(y).
  \]
  Now note that, since \(\textup{t}_*(\lambda_u)\sim\tau_u\),
  \(\dd\tau_u/\dd\textup{t}_*(\lambda_u)=(\dd\textup{t}_*(\lambda_u)/\dd\tau_u)^{-1}\). Therefore,
  above term equals which is same as
  \begin{equation}\label{eq:relation-between-Adj-fns-2}
    \int_X\int_G f(\gamma,\gamma\inverse x)\,\Delta_Y(\gamma,\gamma\inverse \textup{t}(x))\,\frac{\dd\textup{t}_*(\lambda_u)}{\dd\tau_u}(y) \left(\frac{\dd\textup{t}_*(\lambda_u)}{\dd\tau_u} (\gamma\inverse
      y)\right)^{-1}\,\dd\alpha^{r_X(x)}(\gamma) \,\lambda_u(x).
  \end{equation}
  Comparing Equations~\eqref{eq:relation-between-Adj-fns-1}
  and~\eqref{eq:relation-between-Adj-fns-2} with the definition of
  adjoining function of \((X,\lambda)\), we get
  \begin{align*}
    \Delta_X(\gamma\inverse,x) &= \left.\Delta_Y(\gamma\inverse,
                                 \textup{t}(x))\,\frac{\dd\textup{t}_*(\lambda_u)}{\dd\tau_u}(\gamma\inverse
                                 y) \middle/
                                 \frac{\dd\textup{t}_*(\lambda_u)}{\dd\tau_u}
                                 (y)\right.\\
                               &=\frac {M\circ \textup{t}\circ
                                 r_{G\ltimes X}}{M\circ \textup{t}\circ
                                 s_{G\ltimes X}} (\gamma\inverse, x)\;
                                 \Delta_Y\circ (\Id\times \textup{t})
                                 (\gamma\inverse, x)
  \end{align*}
  \(\lambda_u\circ\alpha\)\nb-almost everywhere on
  \(G\times_{\base[G]}X_u\). But \(\Delta_X, \Delta_Y, \textup{t}\)
  and \(M_u\) are continuous functions, therefore, the equality holds
  for all \((\gamma\inverse,x)\in G\times_{\base}X\).
\end{proof}

\begin{proposition}\label{prop:vertical-functoriality}
  With the same data and hypothesis as
  Lemma~\ref{lemma:Deltas-of-isomorphic-corr}, \(\textup{t}\) induces
  an isomorphism \(\textup{T}\colon\Hils(X,\lambda)\to\Hils(Y,\tau)\)
  of \(\Cst\)\nb-correspondences.
\end{proposition}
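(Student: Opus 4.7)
The plan is to define $\textup{T}$ first at the level of compactly supported continuous functions by twisting pullback along $\textup{t}\inverse$ with a square root of the Radon-Nikodym derivative $M$, and then to extend by continuity. Concretely, set
\[
  \textup{T}f(y) \defeq M(y)^{-1/2}\, f(\textup{t}\inverse(y))
\]
for $f\in\Contc(X)$ and $y\in Y$. Since $\textup{t}$ is a homeomorphism and $M$ is a positive continuous function, $\textup{T}$ is a linear bijection $\Contc(X)\to\Contc(Y)$.

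The first thing I would check is that $\textup{T}$ preserves the $\Contc(H)$-valued inner product. Expanding $\inpro{\textup{T}f}{\textup{T}g}_\tau(\eta)$ from~\eqref{def:inner-product}, the factor $M(y)^{-1/2}M(y\eta)^{-1/2}$ appears; by part~(i) of Lemma~\ref{lemma:Deltas-of-isomorphic-corr}, $M$ is $H$-invariant, so this collapses to $M(y)\inverse$. Then $M\inverse\,\dd\tau = \dd \textup{t}_*(\lambda)$, and the change of variables $y=\textup{t}(x)$ together with $H$-equivariance of $\textup{t}$ recovers $\inpro{f}{g}_\lambda(\eta)$. An essentially identical calculation shows that $\textup{T}(f\cdot\psi) = \textup{T}(f)\cdot\psi$ for $\psi\in\Contc(H)$, again using $H$-invariance of $M$ (so $M(y\eta)\inverse=M(y)\inverse$ and the twist commutes with the right $H$-action). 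This upgrades $\textup{T}$ to an isometric isomorphism of right Hilbert $\Cst(H,\beta)$-modules after completion.

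The main obstacle is verifying that $\textup{T}$ intertwines the left representations of $\Cst(G,\alpha)$, and this is precisely where part~(ii) of Lemma~\ref{lemma:Deltas-of-isomorphic-corr} is used. Computing $\textup{T}(\phi\cdot f)(y)$ and $\phi\cdot \textup{T}(f)(y)$ from~\eqref{def:left-right-action}, and writing $x=\textup{t}\inverse(y)$, the two expressions agree as integrands over $G^{r_Y(y)}$ provided that
\[
  \frac{\Delta_X^{1/2}(\gamma,\gamma\inverse x)}{\Delta_Y^{1/2}(\gamma,\gamma\inverse \textup{t}(x))} \;=\; \frac{M(\textup{t}(x))^{1/2}}{M(\textup{t}(\gamma\inverse x))^{1/2}},
\]
which, after squaring, is exactly the identity given by Lemma~\ref{lemma:Deltas-of-isomorphic-corr}(ii) evaluated at $(\gamma\inverse,x)\in G\ltimes X$. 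Thus $\textup{T}\circ\phi = \phi\circ\textup{T}$ on $\Contc(X)$.

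Finally, since $\Contc(X)$ is dense in $\Hils(X,\lambda)$ and $\Contc(Y)$ is dense in $\Hils(Y,\tau)$ (by Theorem~\ref{thm:mcorr-gives-ccorr}), and $\textup{T}$ is an isometric bijection on these dense subspaces intertwining both the left and right actions, $\textup{T}$ extends uniquely to a unitary isomorphism $\Hils(X,\lambda)\to\Hils(Y,\tau)$ of $\Cst$-correspondences. The only non-routine ingredient is the matching of adjoining functions supplied by Lemma~\ref{lemma:Deltas-of-isomorphic-corr}(ii); everything else is bookkeeping with pushforwards and the $H$-invariance of $M$.
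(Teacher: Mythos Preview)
Your proof is correct and matches the paper's approach: define $\textup{T}$ as pullback along $\textup{t}^{-1}$ twisted by a square root of the Radon--Nikodym derivative, use $H$-invariance of $M$ (Lemma~\ref{lemma:Deltas-of-isomorphic-corr}(i)) for the right Hilbert-module structure, and Lemma~\ref{lemma:Deltas-of-isomorphic-corr}(ii) for the left intertwining. The only superficial differences are that you verify $\langle\textup{T}f,\textup{T}g\rangle=\langle f,g\rangle$ directly while the paper constructs an explicit adjoint $\textup{T}^*$ and checks $\textup{T}\textup{T}^*=\textup{T}^*\textup{T}=\Id$, and your exponent $M^{-1/2}$ versus the paper's $M^{1/2}$ simply reflects an inconsistency in the paper's own convention for $M$ between the statement of Lemma~\ref{lemma:Deltas-of-isomorphic-corr} and the proof of the Proposition.
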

\begin{proof}
  Let \(M\) continue to have the same meaning in
  Lemma~\ref{lemma:Deltas-of-isomorphic-corr}. Define
  \[
    \textup{T}\colon \Contc(X)\rightarrow \Contc(Y),\quad
    \textup{T}(f)=f\circ \textup{t}\inverse \cdot M^{1/2}
  \]
  where \(f \in \Contc(X)\). Then we claim that \(\textup{T}\) induces
  an isomorphism of topological correspondences from
  \(\Hils(X,\lambda)\) to \(\Hils(Y,\tau)\).

  We first prove that \(\textup{T}\) extends to a unitary operator of
  Hilbert \(\Cst(H,\beta)\)\nb-modules
  \(\Hils(X,\lambda)\to\Hils(Y,\tau)\). Firstly, \(\textup{T}\) is
  clerly a linear mapping between complex vector spaces. Let
  \(\psi\in \Contc(H)\) and \(f, g\in \Contc(X)\). Then
  \begin{align*}
    \textup{T}(f\psi)(y)
    &=(f\psi)(\textup{t}\inverse (y)) \, M^{1/2}(y)=\int_H f(\textup{t}\inverse(y)\eta)\psi(\eta\inverse) \, M^{1/2}(y) \,\dd\beta^{s_X(x)}(\eta)\\
    &=\int_H f\circ\textup{t}\inverse(y\eta) f\circ\textup{t}\inverse(y\eta)\psi({\eta}\inverse) \, M^{1/2}(y\eta) \,\dd\beta^{s_X(x)}(\eta)\\
    &=\int_H \textup{T}(f)(y\eta)\psi(\eta\inverse)\,\beta^{s_Y(y)}(\eta).
  \end{align*}
  To get the third equality above, we use the facts that
  \(\textup{t}\inverse\) is a \(H\)\nb-equivariant homeomorphism, and
  that \(M\) is \(H\)\nb-invariant
  (cf.\,\ref{lemma:Deltas-of-isomorphic-corr}{(i)}). Thus
  \(\textup{T}\) is a linear map of pre-Hilbert modules over the
  pre-\(\Cst\)\nb-algebra \(\Contc(H)\).

  Now, recall from Lemma~\ref{lemma:Deltas-of-isomorphic-corr} that
  \(M=\dd\textup{t}_*(\lambda)/\dd\tau\). Applying chain rule to the
  composites of the functions
  \((X,\lambda_u) \xrightarrow{\textup{t}} (Y,\tau_u)
  \xrightarrow{\textup{t}\inverse} (X,\lambda_u)\) of measures spaces,
  where \(u\in\base[H]\), one observes that
  \begin{equation}\label{eq:adjo-of-T}
    1=\dd\lambda_u/\dd\lambda_u=
    M\circ \textup{t}\cdot
    \frac{\dd\textup{t}\inverse_*(\tau_u)}{\dd\lambda_u} \quad\text{ or }\quad \frac{ \dd\textup{t}\inverse_*
      (\tau_u)}{\dd\lambda_u} = \frac{\dd \tau_u}{\dd\textup{t}_*(\lambda_u)}\circ \textup{t}=\frac{1}{M}\circ \textup{t}.
  \end{equation}
  This observation motivates us to define
  \[
    \textup{T}^* \colon \Contc(Y) \rightarrow \Contc(X)
    \quad\text{by}\quad\textup{T}^*(g) = g\circ \textup{t} \cdot
    \sqrt{\frac{\dd\tau}{\dd\textup{t}_*(\lambda)}}\circ\textup{t}
    =g\circ \textup{t}\cdot\frac{1}{\sqrt{M}}\circ \textup{t}
  \]
  for \(g\in \Contc(Y)\), and expect that \(\textup{T}^*\) is the
  adjoint of T. We verify claim: for
  \(f\in \Contc(X), g\in \Contc(Y)\) and \(\eta\in H\),
  Equation~\eqref{def:inner-product} gives
  \begin{equation*}
    \inpro{\textup{T}f}{g}(\eta)
    = \int_X\overline{f\circ t\inverse (y)} M^{1/2}(y)
    g(y\eta)\,\dd\tau_{r_H(\eta)}(y).
  \end{equation*}
  Now we change the measure \(\tau_{r_H(\eta)}\) to
  \(\textup{t}_*(\lambda_{r_H(\eta)})\) in the last term so it becomes
  \begin{multline*}
    \int_X\overline{f\circ t\inverse (y)}\,
    \sqrt{\frac{\dd\textup{t}_*(\lambda_{r_H(\eta)})}{\dd\tau_{r_H(\eta)}}(y)}\frac
    {\dd\tau_{r_H(\eta)}}{\dd \textup{t}_*(\lambda_{r_H(\eta)})}(y)
    g(y\eta)\,\dd\textup{t}_*(\lambda_{r_H(\eta)})(y)\\=
    \int_X\overline{f\circ t\inverse (y)}\, \sqrt{\frac
      {\dd\tau_{r_H(\eta)}}{\dd \textup{t}_*(\lambda_{r_H(\eta)})}}(y)
    g(y\eta)\,\dd\textup{t}_*(\lambda_{r_H(\eta)})(y).
  \end{multline*}
  Recall that t is an equivariant homeomorphism. Now change the
  measure \(\textup{t}_*(\lambda_{r_H(\eta)})\) to
  \(\textup{t}\inverse_*(\textup{t}_*(\lambda_{r_H(\eta)}))=\lambda_{r_H(\eta)}\).
  Then \(y\mapsto \textup{t}(x)\) and the last term above becomes
  \begin{multline*}
    \int_X\overline{f(x)}\, \sqrt{\frac {\dd\tau_{r_H(\eta)}}{\dd
        \textup{t}_*(\lambda_{r_H(\eta)})}}\circ \textup{t}(x) g\circ
    \textup{t}(x\eta)\,\dd\lambda_{r_H(\eta)}(x)\\=
    \int_X\overline{f(x)}\,
    \textup{T}^*(g)(x\eta)\,\dd\lambda_{r_H(\eta)}(x)=\inpro{f}{T^*(g)}(\eta).
  \end{multline*}
  Moreover, using Equation~\eqref{eq:adjo-of-T} one can check that
  \(\textup{Id}_{\Contc(X)}=\textup{T}^*\circ \textup{T}\) and
  \(\textup{Id}_{\Contc(Y)}=\textup{T}\circ \textup{T}^*\) --- for
  \(f\in \Contc(X)\) and \(x\in X\),
  \[
    \textup{T}^*(\textup{T}(f))(x)= \textup{T}(f)(t(x))\cdot
    \frac{1}{\sqrt{M(\textup{t}(x))}} = f(x)
    \frac{\sqrt{M(\textup{t}(x))}}{\sqrt{M(\textup{t}(x))}} = f(x).
  \]
  Similarly, \(\textup{T}\circ
  \textup{T}^*=\textup{Id}_{\Contc(Y)}\). This implies that
  \(\textup{T}\colon \Contc(X)\to \Contc(Y)\) is an isometric
  isomorphism isomorphism of pre-Hilbert modules over the
  pre-\(\Cst\)\nb-algebra \(\Contc(H,\beta)\) with
  \(\textup{T}\inverse = \textup{T}^*\). Similar claim holds for
  \(\textup{T}^*\). Now extend the isometries T and \(\textup{T}^*\)
  to unitaries of Hilbert module \(\Hils(X,\lambda)\) and
  \(\Hils(Y,\tau)\); we shall use the symbols T and \(\textup{T}^*\)
  themselves for the unitary extensions of the isomorphisms T and
  \(\textup{T}^*\) of pre-Hilbert modules.  \smallskip
  
  In the rest of the part of the proof, we show that T\('\)
  intertwines the representation of \(\Cst(G,\alpha)\) on
  \(\Hils(X,\lambda)\) and \(\Hils(Y,\tau)\). This will complete the
  proof.

  Let
  \begin{equation*}
    \pi_1 \colon \Cst(G,\alpha)\to
    \Bound(\Hils(X,\lambda))_{\Cst(H,\beta)}\quad\text{and}\quad
    \pi_2 \colon \Cst(G,\alpha)\to\Bound(\Hils(Y,\tau))_{\Cst(H,\beta)}
  \end{equation*}
  denote the representations that gives the
  \(\Cst\)\nb-correspondences \(\Hils(X,\lambda)\) and
  \(\Hils(Y,\tau)\). The first one in
  Equation~\eqref{def:left-right-action} gives these representations
  for certain functions. Due to density of \(\Contc(X)\) and
  \(\Contc(Y)\) in \(\Hils(X,\lambda)\) and \(\Hils(Y,\tau)\), it
  suffices to show that
  \(\textup{T}\circ\pi_1(\psi)(f)=\pi_2(\psi)\circ\textup{T}(f)\) for
  \(\psi\in \Contc(G)\) and \(f\in \Contc(X)\). Following is the
  computation:
  \begin{align*}
    \pi_2(\psi)(\textup{T}(f))(y) &=\int_G \psi(\gamma)\textup{T}(f)(\gamma\inverse y)\,\Delta_Y^{1/2}(\gamma,\gamma\inverse y)\,\dd\alpha^{r_Y(y)}(\gamma)\\
                                  &=\int_G \psi(\gamma)f\circ \textup{t}\inverse (\gamma\inverse
                                    y)\,M^{-1/2}(\gamma\inverse y)\,\Delta_Y(\gamma,\gamma\inverse
                                    y)^{1/2}\,\dd\alpha^{r_Y(y)}(\gamma)
  \end{align*}
  where \(y\in Y\).  The last term above can also be written as
  \[
    \int_G \psi(\gamma)f\circ \textup{t}\inverse (\gamma\inverse
    y)\frac {M^{1/2}(y)}{M^{1/2}(\gamma\inverse
      y)}\,\Delta_Y(\gamma,\gamma\inverse y)^{1/2}
    M^{-1/2}(y)\,\dd\alpha^{r_Y(y)}(\gamma).
  \]
  Now change the variable \(y\mapsto t(x)\), and use the
  \(G\)\nb-equivariance of \(\textup{t}\inverse\) so the last term can
  be written as
  \[
    \int_G \psi(\gamma)f (\gamma\inverse
    x)\sqrt{\left(\frac{M\circ\textup{t}(x)}{M\circ
          \textup{t}(\gamma\inverse x)}\,\Delta_Y\circ (\Id_G\times
        \textup{t})(\gamma,\gamma\inverse x)\right)}
    M^{-1/2}(\textup{t}(x))\,\dd\alpha^{r_X(x)}(\gamma).
  \]
  Using Lemma~\ref{lemma:Deltas-of-isomorphic-corr}(ii), we can write
  the above term as
  \begin{align*}
    &\int_G \psi(\gamma)f(\gamma\inverse
      x)\sqrt{\Delta_X(\gamma,\gamma\inverse x)}
      M^{-1/2}(\textup{t}(x))\,\dd\alpha^{r_X(x)}(\gamma)\\
    &=M^{-1/2}(\textup{t}(x))\int_G \psi(\gamma)f(\gamma\inverse
      x)\Delta_X^{1/2}(\gamma,\gamma\inverse x)
      \,\dd\alpha^{r_X(x)}(\gamma)\\
    &= M^{-1/2}(\textup{t}(x)) \cdot\pi_2(\psi)
      (f)(x)=M^{-1/2}(y) \cdot\left(\pi_1(\psi)
      f\right)\circ \textup{t}\inverse (y)= T\circ \pi_1(\psi)(f)(y);
  \end{align*}
  in the second last equality above, we change the variable
  \(\textup{t}(x)\mapsto y\).
\end{proof}

Note that in Proposition~\ref{prop:vertical-functoriality}, the
identity map on \(X\) induces the identity isomorphism on
\(\Hils(X,\lambda)\).

\begin{corollary}
  \label{cor:functoriality-on-vetical-arrows}
  Along with the same data and hypothesis as
  Lemma~\ref{lemma:Deltas-of-isomorphic-corr}, assume that
  \((Z,\kappa)\) is another topological correspondence from
  \((G,\alpha)\to (H,\beta)\), and \(\textup{l}\colon Y\to Z\) is an
  isomorphism of correspondences. If
  \(\textup{L}\colon \Hils(Y,\tau)\to \Hils(Z,\kappa)\) is the
  isomorphism of \(\Cst\)\nb-correspondences that \(l\) induces, and T
  the one induced by t, then
  \(\textup{T}\circ\textup{L}\colon \Hils(X,\lambda)\to
  \Hils(Z,\kappa)\) is the isomorphism of \(\Cst\)\nb-correspondences
  that the composite \(l\circ t\) induces.
\end{corollary}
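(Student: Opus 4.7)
The plan is to reduce the statement to the chain rule for Radon-Nikodym derivatives (Lemma~\ref{lem:chain-rule}) applied to the composable homeomorphisms $X\xrightarrow{\textup{t}}Y\xrightarrow{\textup{l}}Z$. Bearing in mind B\'enabou's reverse-composition convention, $\textup{T}\colon\Hils(X,\lambda)\to\Hils(Y,\tau)$ and $\textup{L}\colon\Hils(Y,\tau)\to\Hils(Z,\kappa)$ are composed by applying $\textup{T}$ first and $\textup{L}$ second.

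First I would name the three continuous positive Radon-Nikodym derivatives associated by Lemma~\ref{lemma:Deltas-of-isomorphic-corr} to each isomorphism, writing $M_\textup{t}$ on $Y$, $M_\textup{l}$ on $Z$ and $M_{\textup{l}\circ\textup{t}}$ on $Z$ for those attached to $\textup{t}$, $\textup{l}$ and the composite $\textup{l}\circ\textup{t}$, respectively. Using the explicit formula established in the proof of Proposition~\ref{prop:vertical-functoriality}, namely $\textup{T}(f)=f\circ\textup{t}\inverse\cdot M_\textup{t}^{1/2}$ on $\Contc(X)$ (and the analogous formulas for $\textup{L}$ and for the unitary induced by $\textup{l}\circ\textup{t}$), a single substitution will give, for $f\in\Contc(X)$,
\[
  (\textup{L}\circ\textup{T})(f)=f\circ(\textup{l}\circ\textup{t})\inverse\cdot\bigl(M_\textup{t}\circ\textup{l}\inverse\cdot M_\textup{l}\bigr)^{1/2}.
\]
Consequently, the corollary reduces to the pointwise identity $M_{\textup{l}\circ\textup{t}}=(M_\textup{t}\circ\textup{l}\inverse)\cdot M_\textup{l}$ of continuous positive functions on $Z$.

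This identity is exactly what Lemma~\ref{lem:chain-rule} supplies, after Lemma~\ref{lem:equi-functoriality} is used to identify $(\textup{l}\circ\textup{t})_*(\lambda)$ with $\textup{l}_*(\textup{t}_*(\lambda))$; positivity and continuity of the derivatives (Definition~\ref{def:equivalent-measure-families}) legitimize any reciprocals needed to align conventions. Once the identity is in hand, the formulas for $\textup{L}\circ\textup{T}$ and for the isomorphism induced by $\textup{l}\circ\textup{t}$ coincide on the dense subspace $\Contc(X)\subseteq\Hils(X,\lambda)$, and since Proposition~\ref{prop:vertical-functoriality} guarantees that each of the two is a unitary intertwining the $\Cst(G,\alpha)$-representation, they agree as isomorphisms of $\Cst$\nb-correspondences. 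I do not foresee a serious obstacle; the only point requiring minor care is checking that $M_\textup{t}\circ\textup{l}\inverse$ is globally well-defined on $Z$, which follows from the $H$\nb-equivariance of $\textup{l}$ together with the transverse continuity built into Definition~\ref{def:equivalent-measure-families}(ii).
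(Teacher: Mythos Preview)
Your proposal is correct and is exactly the approach the paper takes: its proof is the single line ``Follows from the definitions of T and L and the chain rule for measures,'' and you have merely unpacked that line. The only superfluous point is your closing worry about $M_\textup{t}\circ\textup{l}\inverse$ being well-defined on $Z$: since $\textup{l}\inverse\colon Z\to Y$ is a homeomorphism and $M_\textup{t}$ is a continuous function on $Y$, this is immediate and requires neither $H$\nb-equivariance nor the transverse-continuity clause of Definition~\ref{def:equivalent-measure-families}.
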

\begin{proof}
  Follows from the definitions of T and L and the chain rule for
  measures.
\end{proof}

\begin{remark}
  Let \((X,\mathfrak{B},\mu)\) be seperable \(\sigma\)\nb-finite
  measure space. In~\cite[Definition
  2.2]{Sunder1996ImprimitivityNotes}, Sunder defines an automorphism
  of \((X,\mathfrak{B},\mu)\) as a \(\mathfrak{B}\)\nb-measurable
  function \(T\colon X\to X\) which is \(\mu\)\nb-almost everywhere
  invertible. Then the automorphisms of \((X,B,\mu)\) form a group,
  say \(G\). Let \(\Hils\) be a separable Hilbert space, and let
  \(\Ltwo(X,\mu; \Hils)\) denote the Hilbert space of
  \(\mu\)\nb-square integrable function on \(X\) taking values in
  \(\Hils\). Then Sunder shows the \(G\) group has an obvious unitary
  representation on \(\Ltwo(X,\mu;\Hils)\), see~\cite[Proposition
  2.4]{Sunder1996ImprimitivityNotes} for details and compare it with
  Proposition~\ref{prop:vertical-functoriality} and
  Corollary~\ref{cor:functoriality-on-vetical-arrows}. On the similar
  lines, with the help of
  Proposition~\ref{prop:vertical-functoriality} and
  Corollary~\ref{cor:functoriality-on-vetical-arrows}, one may start
  with a locally compact second countable topological space \(X\), a
  map \(\pi\colon X\to Y\) of spaces, a continuous family of measures
  along \(\pi\). Define the group \(G\) of continuous automorphisms of
  the pair \(\mu\) as all homeomorphisms \(T\) of \(X\) with
  \(\pi\circ T=\pi\). Now let \(p\colon \Hils\to X\) be a continuous
  field of Hilbert spaces. Let \(\Ltwo(X,\mu; \Hils)\) denote the
  Hilbert \(\Contz(Y)\)\nb-module consisting of square-integrable
  sections of \(p\). Then \(G\), the group of automorphisms of
  \((X,\mu)\), has a representation on \(\Ltwo(X,\mu;\Hils)\). This
  representation is given by similar formula as in~\cite[Proposition
  2.4]{Sunder1996ImprimitivityNotes}.
\end{remark}

\begin{corollary}
  Let
  \begin{align*}
    (X, \alpha)&\colon (G_1, \lambda_1) \rightarrow (G_2, \lambda_2),\\
    (Y, \beta)&\colon (G_2, \lambda_2) \rightarrow (G_3, \lambda_3)
  \end{align*}
  be correspondences and let
  \((\Omega, \mu), (\Omega, \mu')\colon (G_1, \lambda_1)\rightarrow
  (G_3, \lambda_3)\) be two composites of them. Then
  \(\Hils(\Omega,\mu)\) and \(\Hils(\Omega,\mu')\) are isomorphic
  \(\Cst\)\nb-correspondences.
\end{corollary}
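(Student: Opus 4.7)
The plan is to combine two results that have already been established in the paper. First, Proposition~\ref{prop:lifted-cocycles-are-iso} tells us that any two composites $(\Omega,\mu)$ and $(\Omega,\mu')$ of the correspondences $(X,\alpha)$ and $(Y,\beta)$ are isomorphic as topological correspondences; in fact, the identity map $\textup{Id}_\Omega\colon \Omega\to\Omega$ is the required $G_1$-$G_3$-equivariant homeomorphism, and the Radon--Nikodym derivative $\dd\mu'_u/\dd\mu_u$ is exhibited as the continuous positive function $c$ on $\Omega$ coming from the ratio of the two $0$-cochains used to build the composites (see Equation~\eqref{eq:-1-cocycle-c}).

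Second, Proposition~\ref{prop:vertical-functoriality} shows that any isomorphism of topological correspondences induces an isomorphism of the associated $\Cst$-correspondences. Applying this proposition to the isomorphism $(\Omega,\mu)\to(\Omega,\mu')$ produced in the previous step yields a unitary $\textup{T}\colon \Hils(\Omega,\mu)\to\Hils(\Omega,\mu')$ of Hilbert $\Cst(G_3,\lambda_3)$-modules that intertwines the left $\Cst(G_1,\lambda_1)$-representations. Concretely, $\textup{T}$ is given on $\Contc(\Omega)$ by $f\mapsto f\cdot c^{1/2}$ where $c=\dd\mu'/\dd\mu$, since the equivariant homeomorphism involved is the identity map.

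There is essentially no obstacle here: the whole point of the corollary is that the preceding two propositions have already done all of the work, and the corollary records the resulting independence of $\Hils(\Omega,\mu)$ from the choice of $0$-cochain $b$ used in Definition~\ref{def:composition}. The only thing worth noting is that this completes the picture begun in Theorem~\ref{thm:well-behaviour-of-composition}: even though the composite topological correspondence itself is not unique, its $\Cst$-correspondence image is well-defined up to canonical isomorphism, making the $\Cst$-assignment a bona fide 1-arrow-level rule on isomorphism classes, as needed for the bifunctor $\tcorr\to\ccorr$.
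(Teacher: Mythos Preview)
Your proof is correct and follows exactly the same route as the paper's own argument, which simply cites Proposition~\ref{prop:lifted-cocycles-are-iso} and Proposition~\ref{prop:vertical-functoriality}. Your additional remark identifying the explicit unitary as multiplication by $c^{1/2}$ is a helpful unpacking of the general formula in Proposition~\ref{prop:vertical-functoriality} when $\textup{t}=\Id_\Omega$, and it agrees with the paper's conventions.
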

\begin{proof}
  Follows directly from Proposition~\ref{prop:lifted-cocycles-are-iso}
  and Proposition~\ref{prop:vertical-functoriality}.
\end{proof}
\medskip

Denote the bicategory of topological correspondences by \(\tcorr\) and
the one of \(\Cst\)\nb-correspondences by \(\ccorr\).
\begin{theorem}
  \label{thm:functoriality}
  Let \((G,\alpha),(H,\beta)\) objects in \(\tcorr\) and
  \((X,\lambda)\colon (G,\alpha)\to (H,\beta)\) a 1\nb-arrow. Then the
  assignments \((G,\alpha)\mapsto \Cst (G,\alpha)\) and
  \((X,\lambda)\mapsto \Hils(X,\lambda)\) define a bifunctor from
  \(\tcorr\) to \(\ccorr\).
\end{theorem}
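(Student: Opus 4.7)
The plan is to unpack Definition~\ref{def:bifunctor} in the present setting and match each piece of data with results already established in the excerpt. On objects, set $V(G,\alpha)\defeq\Cst(G,\alpha)$. For the hom-level functors $V((G,\alpha),(H,\beta))\colon \tcorr((G,\alpha),(H,\beta))\to\ccorr(\Cst(G,\alpha),\Cst(H,\beta))$, send a topological correspondence $(X,\lambda)$ to the $\Cst$-correspondence $\Hils(X,\lambda)$ produced by Theorem~\ref{thm:mcorr-gives-ccorr}, and send a 2-arrow $\textup{t}\colon(X,\lambda)\to(Y,\tau)$ to the unitary intertwiner $\textup{T}\colon\Hils(X,\lambda)\to\Hils(Y,\tau)$ constructed in Proposition~\ref{prop:vertical-functoriality}. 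That this assignment is a functor on hom-categories is exactly the content of Corollary~\ref{cor:functoriality-on-vetical-arrows} together with the observation right after Proposition~\ref{prop:vertical-functoriality} that the identity map on $X$ induces the identity on $\Hils(X,\lambda)$.

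Next I would supply the structural 2-cells. For each object $(G,\alpha)$, the identity 2-cell $v_{(G,\alpha)}\colon \Cst(G,\alpha)\Rightarrow\Hils(G,\alpha\inverse)$ is the canonical unitary exhibited in Example~\ref{exa:id-corr-iso}, which shows that $\Hils(G,\alpha\inverse)$ and $\Cst(G,\alpha)$ coincide as Hilbert $\Cst(G,\alpha)$-bimodules via the identity map on $G$. For each composable pair $(X,\alpha),(Y,\beta)$ of 1-arrows, the natural transformation
\[
v((X,\alpha),(Y,\beta))\colon \Hils(X,\alpha)\htens_{\Cst(G_2,\chi_2)}\Hils(Y,\beta)\Rightarrow\Hils(\Omega,\mu)
\]
is provided by Theorem~\ref{thm:well-behaviour-of-composition}. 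Naturality in the two slots follows by combining Remark~\ref{rem:for-hor-2-comp} (which tells us how horizontal composites of 2-arrows behave under composition of correspondences) with Proposition~\ref{prop:vertical-functoriality} applied to the horizontally composed 2-arrow $[\phi\times\phi']$: in both cases one unwinds the unitary defining $v$ and checks, on the dense subspace $\Contc$, that applying $\textup{T}$ on each factor and then multiplying commutes with forming the composite correspondence and then applying the composite $\textup{T}$.

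It remains to verify the two coherence diagrams in Definition~\ref{def:bifunctor}. For the identity coherence (Figure~\ref{fig:right-identity-coherence} and its left analogue), I would chase an element of $\Contc(X)$ through the square, using the explicit form of the right identity isomorphism of correspondences described at the end of Example~\ref{exa:id-corr} (with Radon--Nikodym derivative $[b]$) and the explicit unitary produced by Proposition~\ref{prop:vertical-functoriality}; the factor $M^{1/2}=\sqrt{[b]}$ appearing in $\textup{T}$ is precisely what is needed to identify the image of $f\in\Contc(X)$ with $f$ viewed inside $\Hils(X,\lambda)\htens\Cst(H,\beta)\cong\Hils(X,\lambda)$. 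The left identity case proceeds analogously using the formula~\eqref{eq:left-id-corr-2}.

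The pentagon coherence in Figure~\ref{fig:actual-pentagon} is the main obstacle, and it is where the work of Theorem~\ref{thm:proof-of-bicategory-of-top-corr}, in particular Proposition~\ref{prop:left-measures-equi} together with the explicit Radon--Nikodym derivatives $[B'/B'']\circ a''^{-1}$, pays off. Given a triple $((X_1,\lambda_1),(X_2,\lambda_2),(X_3,\lambda_3))$ of composable correspondences, the pentagon compares two unitary paths
\[
(\Hils(X_1)\htens\Hils(X_2))\htens\Hils(X_3)\longrightarrow \Hils(X_1)\htens(\Hils(X_2)\htens\Hils(X_3))
\]
and their factorisations through $\Hils(X_{(12)3})$ and $\Hils(X_{1(23)})$. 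Both paths are, by construction, built from the same interior tensor product associator on the $\Cst$ side and from the homeomorphism $a=a''\circ (a')^{-1}$ on the topological side; the Radon--Nikodym factor appearing in the unitary of Proposition~\ref{prop:vertical-functoriality} applied to $a$ is exactly $[B'/B'']\circ (a'')^{-1}$ (Proposition~\ref{prop:left-measures-equi}), whereas the factors coming from the two $v$-2-cells on either side of the pentagon multiply, by Lemma~\ref{lem:chain-rule} and the multiplicative behaviour of the cocycles $B,B',B''$, to the same quantity. Chasing an element of $\Contc(X_1\times_{\base[G]_2}X_2\times_{\base[G]_3}X_3)$ through both routes and invoking Fubini reduces the equality of the two composed unitaries to the scalar identity
\[
\sqrt{[B'/B]}\cdot\sqrt{[B'/B'']\circ(a'')\inverse}\cdot\sqrt{[B''/B]\circ(a'')\inverse}=1,
\]
which holds because the three cochains are positive real-valued and satisfy $B'\cdot (B''/B)=B''\cdot(B'/B)$. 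Putting the four verifications together establishes that $(V,v)$ is a morphism of bicategories $\tcorr\to\ccorr$.
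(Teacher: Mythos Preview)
Your setup of the data and the overall strategy---element chasing on dense $\Contc$ subspaces for both coherence diagrams---is exactly what the paper does. The assignments on objects, $1$\nb-arrows, $2$\nb-arrows, identity $2$\nb-cells, and the comparison $2$\nb-cells $v$ all match the paper's choices, and your appeal to Corollary~\ref{cor:functoriality-on-vetical-arrows} for functoriality on hom-categories is the right one.

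There is, however, a genuine error in your pentagon verification. The auxiliary cocycle $B$ from the proof of Theorem~\ref{thm:proof-of-bicategory-of-top-corr} has no business appearing here: it was only a device to show $a_*(\mu_{(12)3})\sim\mu_{1(23)}$. The two iterated applications of $\phi$ along the sides of the hexagon introduce precisely the factors $b_{12}^{-1/2}b_{(12)3}'^{-1/2}=(B')^{-1/2}$ and $b_{23}^{-1/2}b_{1(23)}''^{-1/2}=(B'')^{-1/2}$, and the bottom arrow $F(a)$, being the unitary of Proposition~\ref{prop:vertical-functoriality} for the isomorphism $a$, carries the Radon--Nikodym factor governed by $[B'/B'']$. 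The check therefore reduces to the trivial identity $(B')^{-1/2}\cdot\sqrt{B'/B''}=(B'')^{-1/2}$, not to your displayed relation. Indeed, your product $\sqrt{[B'/B]}\cdot\sqrt{[B'/B'']}\cdot\sqrt{[B''/B]}$ equals $[B'/B]$, which is not $1$ in general, and the equation $B'\cdot(B''/B)=B''\cdot(B'/B)$ you invoke is a tautology that does not salvage it. The paper accordingly treats the pentagon as the \emph{easier} of the two checks and dispatches it by the direct trace in Figure~\ref{fig:tracing-pentagon}.

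Your emphasis is also inverted on the identity coherence. The paper carries out the \emph{left} identity square in full detail, and the computation requires more than the factor $\sqrt{[b]}$: one must rewrite the ratio $b(r_X(x),x)/b(\gamma,\gamma^{-1}x)$ as $b\circ s_Q/b\circ r_Q$ on the transformation groupoid $Q=(G\times_{\base[G]}X)\rtimes G$, use $d^0(b)=\Delta_1$, and then invoke Equation~\eqref{eq:left-id-corr-1} to recover the adjoining factor $\Delta^{1/2}(\gamma,\gamma^{-1}x)$ defining the left $\Cst(G,\alpha)$\nb-action. Your sketch does not mention this step, and without it the square does not close.
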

\begin{proof}
  Recall Definition~\ref{def:bifunctor}. We define the bifunctor
  \(\mathfrak{F}=(F,\phi)\colon \tcorr \to \ccorr\) by the following
  assignments of objects, 1\nb-arrows, 2\nb-arrows, identity morphisms
  and natural transformations in \(\tcorr\) to that of \(\ccorr\). The
  note in parentheses at the end of each item indicates what data in
  the definition of bifunctor it referes to.
  \begin{description}[font=\normalfont\itshape]
  \item[Object:] \(F((G,\alpha))=\Cst(G,\alpha)\) (Data (i) in
    Definition~\ref{def:bifunctor})
  \item[1-arrow:] map a 1\nb-arrow
    \((X,\lambda)\colon (G,\alpha)\to (H,\beta)\) to the 1\nb-arrow
    \(F((X,\lambda))=\Hils(X,\lambda)\) in
    \(\ccorr(\Cst(G,\alpha),\Cst(H,\beta))\) (Data (ii) in
    Definition~\ref{def:bifunctor}).
  \item[2-arrow:] map a 2\nb-arrow t in
    \(\tcorr((G,\alpha),(H,\beta))\) to the isomorphism of
    \(\Cst\)\nb-correspondences \(F(\textup{t})\defeq\textup{T}\) in
    \(\ccorr(\Cst(G,\alpha),\Cst(H,\beta)\) as in
    Proposition~\ref{prop:vertical-functoriality}. Note that \(F\) is
    a functor from \(\tcorr((G,\alpha),(H,\beta))\) to
    \(\ccorr(\Cst(G,\alpha),\Cst(H,\beta))\); this follows from
    Corollary~\ref{cor:functoriality-on-vetical-arrows} and the remark
    above it (Data (ii) in Definition~\ref{def:bifunctor}).
  \item[Identity 2-morphism:] In \(\tcorr\) the identity 1\nb-arrow at
    \((G,\alpha)\) is \((G,\alpha\inverse)\), and the one at
    \(\Cst(G,\alpha)\) in \(\ccorr\) is \(\Cst(G,\alpha)\). The
    identity 2\nb-arrow \(\Cst(G,\alpha)\to \Hils(G,\alpha\inverse)\)
    is the identity isomorphism \(\Id_{\Cst(G,\alpha)}\) of
    \(\Cst\)\nb-correspondences as discussed in
    Example~\ref{exa:id-corr-iso} (Data (iii) in
    Definition~\ref{def:bifunctor}).
  \item[Natural transformation between composites:] Let
    \((X,\lambda) \colon (G,\alpha)\to (H,\beta)\) and \\
    \((Y,\mu) \colon (H,\beta) \to (K,\nu)\)
    be 1\nb-arrows. Then the natural transformation
    \(\phi((G,\alpha),(H,\beta),(K,\nu))\) between composites in
    \(\ccorr\) and \(\tcorr\) is the isomorphism
    \(\Hils(X,\lambda)\otimes_{\Cst(H,\beta)}\Hils(Y,\mu)\to
    \Hils(X\circ Y,\lambda\circ \mu)\) defined in
    Theorem~\ref{thm:well-behaviour-of-composition}.
  \end{description}
   
  We briefly recall definition of
  \(\phi((G,\alpha),(H,\beta),(K,\nu))\). Write simply \(\phi\). For
  \(f\in\Contc (X)\) and \(g\in \Contc(Y)\),
  \(\phi(f\otimes g)\in\Contc((X\times_{\base[H]}Y)/H)\) is the
  function
  
  \begin{multline}\label{eq:def-nat-trans}
    \phi(f\otimes g)([x,y])\\=\int_H f(x\eta) g(\eta\inverse y)
    b^{-1/2}(x\eta,\eta\inverse y)\,\dd\beta^{r_Y(y)}(\eta)\defeq
    [(f\otimes g)b^{1/2}]([x,y])
  \end{multline}
  where \([x,y]\) is the equivalence class of
  \((x,y)\in X\times_{\base[H]}Y\) in \((X\times_{\base[H]}Y)/H\), and
  \(b\) is the 0\nb-cocyle used to construct the family of measures on
  \((X\times_{\base[H]}Y)/H\). For more details, reader may refer to
  proof of Theorem~3.14 in~\cite{Holkar2017Composition-of-Corr},
  particularly, page~110 there.

  Now we prove that the pair \((F,\phi)= \bifunct\) is a morphism from
  the bicategory \(\tcorr\) to the bicategory \(\ccorr\). For this, we
  need to check that associativity coherence for transformations
  (Figures~\ref{fig:actual-pentagon}) and the coherence of (left and
  right) identity (Figure~\ref{fig:right-identity-coherence})
  hold. For this purpose, one can check the commutativity on the
  elementary tensors in the dense pre-Hilbert \(\Cst\)\nb-modules
  consisting of \(\Contc\) functions. Then the result can be extended
  to Hilbert \(\Cst\)\nb-modules by using the linearly of maps and
  standard density arguments.

  Checking commutativity of Figure~\ref{fig:actual-pentagon} is not so
  hard; it follows from a direct computation the definition of the
  natural ho \(\phi\) and the associativity of \(1\)\nb-arrows in
  \(\ccorr\). Let \(i\in \{1,2,3\}\), and let
  \((X_i,\lambda_i)\colon (G_i,\alpha_i)\to (G_{i+1},\alpha_{i+1})\)
  be three 1\nb-arrows in \(\ccorr\). Let
  \(f_i\in\Contc(X_i,\lambda_i)\) where \(i=1,2,3\). If one starts
  tracing how the function \((f_1\otimes f_2)\otimes f_3\) travels
  across Figure~\ref{fig:actual-pentagon}, we get
  Figure~\ref{fig:tracing-pentagon} from which, one can see the
  desired results clearly holds. In this figure, we write \(F(X_i)\)
  instead of \(F((X_i,\lambda_i))\) for simplicity of writing where
  \(i=1,2\).
  \begin{figure}[ht]
    \centering
    \begin{tikzpicture}[scale=2]
      \draw[dar] (0,0.9)--(0,0.1);
      \draw[dar] (0,1.9)--(0,1.1);
      \draw[dar] (2.5,2)--(0.5,2); 
      \draw[dar] (3,1.9)--(3,1.1);
      \draw[dar] (3, 0.9)--(3,0.1);
      \draw[dar] (2.25,0)--(0.7, 0);
      \node at (0,0
      )[scale=0.8]{\([f_1\otimes [f_2\otimes f_3]]
        \,b_1^{1/2}b_2^{1/2}\)}; \node at
      (0,1)[scale=0.8]{\(f_1\otimes ([f_2\circ f_3]\, b_2^{1/2})\)};
      \node at (0,2)[scale=0.8]{\(f_1\otimes (f_2\otimes f_3)\)};
      \node at (3,2)[scale=0.8]{\((f_1\otimes f_2)\otimes f_3\)};
      \node at
      (3,1)[scale=0.8]{\(([f_1\otimes f_2]\, b_1^{1/2})\otimes f_3\)};
      \node at
      (3,0)[scale=0.8]{\([[f_1\otimes f_2]\otimes f_3]\,
        b_1^{1/2}b_2^{1/2}\)};
      \node at (-0.6,0.5 )[scale=0.8]{\(\phi(X_1,X_2\circ X_3)\)};
      \node at
      (-0.6,1.5)[scale=0.8]{\(\textup{Id}_{X_1}\circ \phi(X_2,X_3)\)};
      \node at (1.5,2.1 )[scale=0.8]{\(a(F(X_1),F(X_2),F(X_3))\)};
      \node at (1.5,1.9 )[scale=0.8]{\(\sim\)}; \node at (3.7,1.5
      )[scale=0.8]{\(\phi(X_1,X_2)\circ\textup{Id}_{F(X_3)}\)}; \node
      at (3.6,0.5 )[scale=0.8]{\(\phi(X_1\circ X_2, X_3)\)}; \node at
      (1.5,-0.15 )[scale=0.8]{\(F(a(X_1, X_2, X_3))\)}; \node at
      (1.5,0.1 )[scale=0.8]{\(\sim\)};
    \end{tikzpicture}
    \caption{}
    \label{fig:tracing-pentagon}
  \end{figure}
  
  \noindent \emph{The identity isomorphisms:} Let \((X,\lambda)\) be a
  correspondence from \((G,\alpha)\) to \((H,\beta)\). First we check
  the left identity coherence. Let \((G\circ X,\mu)\) denote a
  composite of the identity correspondence \((G,\alpha\inverse)\) at
  \((G,\alpha)\) and \((X,\lambda)\). Let \(b\) be a 0\nb-cocycle on
  the transformation groupoid \(Q=(G\ltimes_{\base[G]}X)\rtimes G\)
  that is used to create \(\mu\). Let \(\Delta_1\) denote the
  1\nb-cocycle on \(Q\) such that \(d^0(b)=\Delta_1\). For the left
  identity coherence we need to check that
  Figure~\ref{fig:actual-right-id-coh} commutes.
  \begin{figure}[ht]
    \[
      \begin{tikzcd}[column sep=2.7cm]
        \Hils(X,\lambda) & \Hils(G\circ X,\mu')
        \ar[l,"\textup{T}"', Rightarrow]\\
        \Cst(G,\alpha)\otimes \Hils(X,\lambda)
        \ar[u,"\textup{i}",Rightarrow] \ar[r,
        "\Id_{\Cst(G,\alpha)}\otimes
        \Id_{\Hils(X,\lambda)}"',Rightarrow]&
        \Hils(G,\alpha\inverse)\otimes\Hils(X,\lambda)\ar[u,
        "{\phi(G,X)}"',Rightarrow]
      \end{tikzcd}
    \]
    \caption{}
    \label{fig:actual-right-id-coh}
  \end{figure}
  In this figure, the map \(\Id_{\Cst(G,\alpha)}\) in the bottom
  horizontal arrow the identity isomorphism discussed in
  Example~\ref{exa:id-corr}; this isomorphism of Hilbert
  \(\Cst(G,\alpha)\)\nb-modules is induced by the identity map on
  \(\Contc(G)\). Thus
  \(\Id_{\Cst(G,\alpha)}\otimes\Id_{\Hils(X,\lambda)}\) is the
  identity map. The right vertical map is the assignment \(\phi\) of
  2\nb-arrows. The top horizontal map \textup{T} is given in
  Proposition~\ref{prop:vertical-functoriality}. And i is the
  isomorphism \(a\otimes b\mapsto ab\) for an elementary tensor
  \(a\otimes b\in \Cst(G,\alpha)\otimes \Hils(X,\lambda)\). Let
  \(f\in\Contc(G)\subseteq \Cst(G,\alpha)\) and
  \(g\in\Contc(X)\subseteq \Hils(X,\lambda)\). Then for the elementary
  tensor \(f\otimes g\in \Cst(G,\alpha)\otimes \Hils(X,\lambda)\),
  \[
    \phi(G,X)\left(\Id_{\Cst(G,\alpha)}\otimes \Id_{\Hils(X,\lambda)}
      \left(f\otimes g\right)\right) = [(f\otimes g)b^{1/2}]
  \]
  where \((f\otimes g)\) is defined in
  Equation~\eqref{eq:def-nat-trans}. Recall from
  Equation~\ref{eq:left-id-corr-2} and Example~\ref{exa:id-corr} that
  \(\dd\mu/\dd\lambda (\gamma\inverse, x)=b(s_G(\gamma),\gamma\inverse
  x)\) for \((\gamma\inverse,x)\in G\ltimes X\). Also, recall that in
  this Example, we identify \((G\times_{\base[G]}X)/G\) by \(G\)
  itself; this allows us to consider \(\mu\) as a family of measures
  on \(X\). With this in mind, we write
  \begin{align*}
    \textup{T}( [(f\otimes g)b^{1/2}])(x) =& [(f\otimes
                                             g)b^{1/2}][r_X(x),x]\cdot { \frac{ \dd \mu_u }{ \dd \lambda_u }([r_X(x), x] ) }^{1/2}\\
    =& b^{1/2}(r_X(x),x) \int_G f(\gamma) g(\gamma\inverse x)
       b^{-1/2}(\gamma,\gamma\inverse x)\,\dd\alpha^{r_X(x)}(\gamma)\\
    =& \int_G f(\gamma) g(\gamma\inverse x)
       \sqrt{\frac{b(r_X(x),x)}{b(\gamma,\gamma\inverse
       x)}}\,\dd\alpha^{r_X(x)}(\gamma).
  \end{align*}
  Note that \(r_X(x)=r_G(\gamma)\), and that for
  \(\gamma,\gamma\inverse x,\gamma\inverse)\in Q\),
  \(s_Q(\gamma,\gamma\inverse
  x,\gamma\inverse)=(r_G(\gamma),\gamma\inverse
  x)=(r_X(x),\gamma\inverse x)\) and
  \(r_Q)(\gamma,\gamma\inverse
  x,\gamma\inverse)=(\gamma,\gamma\inverse x)\). With this we re-write
  the last term in above computation as follows and compute further:
  \begin{align*}
    & \int_G f(\gamma) g(\gamma\inverse x)
      \sqrt{\frac{b\circ s_Q(\gamma,\gamma\inverse
      x,\gamma\inverse)}{b\circ r_Q(\gamma,\gamma\inverse
      x,\gamma\inverse)}}\,\dd\alpha^{r_X(x)}(\gamma)\\
    =& \int_G f(\gamma) g(\gamma\inverse x)
       \sqrt{\Delta_{1}(\gamma,\gamma\inverse
       x,\gamma\inverse)}\,\dd\alpha^{r_X(x)}(\gamma).
  \end{align*}
  But
  \(\Delta_1(\gamma,\gamma\inverse
  x,\gamma\inverse)=\Delta(\gamma,\gamma\inverse x)\). Therefore, the
  above term equals
  \begin{equation*}
    \int_G f(\gamma) g(\gamma\inverse x) \Delta^{1/2}(\gamma,\gamma\inverse
    x)\,\dd\alpha^{r_X(x)}(\gamma)= f\cdot g(x) = \textup{i}(f\otimes
    g)(x)
  \end{equation*}
  where \(f\cdot g\) is the action of \(f\in \Contc(G)\) on
  \(g\in \Contc(X)\). This shows that
  Figure~\ref{fig:actual-right-id-coh} commutes.
  
  With the help of Example~\ref{exa:id-corr}, one may verify the
  coherence for the right identity in a similar fashion as above.
\end{proof}

Finally, we give three illustrations describing the
\(\Cst\)\nb-(bi)functor: the first one involving spaces, second one
involving topological quivers and the last one involving topological
groups. We avoid the rigorous definition of the term embedding below.

\begin{illustration}[The \(\Cst\)\nb-functor on spaces]
  Let \(\mathfrak{S}\) denote the category of locally compact spaces
  with continuous functions a morphisms. Considering a space as a
  trivial ({\'e}tale) groupoid, a continuous map \(f\colon X\to Y\) of
  spaces gives us the topological correspondence
  \((Y,\delta_X)\colon X\to Y\), see Example~3.1
  in~\cite{Holkar2017Construction-of-Corr} for details. If \(g\colon
  Y\to Z\) is another map of spaces then Example~4.1
  in~\cite{Holkar2017Composition-of-Corr} shows that the composition
  \[
(Y,\delta_X)\circ (Z,\delta_Y)=(Z,\delta_X)
\]
where \((Z,\delta_X)\) is the correspondence associated with
\(g\circ f\colon X\to Z\). This \emph{embeds}~\(\mathfrak{S}\) in
\(\tcorr\). Theorem~\ref{thm:functoriality} is clear for
\(\mathfrak{S}\) due to~\cite{Holkar2017Composition-of-Corr}{Example
  4.1} --- this is a halve of the well-known functoriality in Gelfand's
characterisation of abelian \(\Cst\)\nb-algebras.
\end{illustration}

\begin{illustration}[The (bi)category of quivers]
  Recall
  from~\cite{Muhly-Tomforde-2005-Topological-quivers}*{Definition
    3.17} that a topological quiver from a space \(X\) to a space
  \(Y\) is quintuple \((Z,b,f,\lambda)\) where
  \(X\xleftarrow{b} Z\xrightarrow{f} Y\) are continuous maps and
  \(\lambda\) is a continuous family of measures along \(f\). As shown
  in~\cite{Holkar2017Construction-of-Corr}{Examples~3.3} a topological
  quiver is a topological correspondence. The composite of two
  topological quivers is again a topological quiver as explained
  in~\cite{Holkar2017Composition-of-Corr}*{Example~4.2}. Thus,
  topological quivers form a category which clearly embeds in
  \(\tcorr\). One can extend this category to a bicategory by adding
  vertical arrows between the spaces involved in the quivers; these
  two arrows are homeomorphisms commuting with the maps \(b\) and
  \(f\). Then Theorem~\ref{thm:functoriality} explains the
  \(\Cst\)\nb-functor for quivers--- this fuctoriality can be checked
  more easily for quivers.
\end{illustration}

\begin{illustration}[The \(\Cst\)\nb-functor for groups]
  As in~\cite{Holkar2017Construction-of-Corr}*{Examples~3.4}, a group
  homomorphisms \(\phi\colon G\to H\) of locally compact groups yields
  the topological correspondence
  \((H,\beta\inverse)\colon (G,\alpha)\to (H,\beta)\) where \(\alpha\)
  and \(\beta\) are the Haar measures on \(G\) and \(H\),
  respectively. Example~4.3 in~\cite{Holkar2017Composition-of-Corr}
  shows that a homomorphism going to a correspondence is
  functorial. This embeds the category of locally compact groups into
  \(\tcorr\). The \(\Cst\)\nb-functor assigns a group \(G\) its
  \(\Cst\)\nb-algebra, a group homomorphism \(\phi\colon G\to H\) the
  \(\Cst\)\nb-correspondence \(\Cst(H,\beta\inverse)\).
\end{illustration}

\noindent\textbf{Acknowledgement:} The author is thankful to Ralf
Meyer and Suliman Albandik for many fruitful discussions.

\begin{bibdiv}
\begin{biblist}

\bib{Benabou1967Bicategories}{incollection}{
      author={B{\'e}nabou, Jean},
       title={Introduction to bicategories},
        date={1967},
   booktitle={Reports of the {M}idwest {C}ategory {S}eminar},
   publisher={Springer, Berlin},
       pages={1\ndash 77},
      review={\MR{0220789 (36 \#3841)}},
}

\bib{Busby-Smith1970Rep-twisted-gp-alg}{article}{
      author={Busby, Robert~C.},
      author={Smith, Harvey~A.},
       title={Representations of twisted group algebras},
        date={1970},
        ISSN={0002-9947},
     journal={Trans. Amer. Math. Soc.},
      volume={149},
       pages={503\ndash 537},
         url={https://doi.org/10.2307/1995410},
      review={\MR{0264418}},
}

\bib{Buss-Meyer-Zhu2013Higher-twisted}{article}{
      author={Buss, Alcides},
      author={Meyer, Ralf},
      author={Zhu, Chenchang},
       title={A higher category approach to twisted actions on
  {$C^*$}-algebras},
        date={2013},
        ISSN={0013-0915},
     journal={Proc. Edinb. Math. Soc. (2)},
      volume={56},
      number={2},
       pages={387\ndash 426},
         url={http://dx.doi.org/10.1017/S0013091512000259},
      review={\MR{3056650}},
}

\bib{Holkar2017Composition-of-Corr}{article}{
      author={Holkar, Rohit~Dilip},
       title={Composition of topological correspondences},
        date={2017},
     journal={Journal of {O}perator {T}heory},
      volume={78.1},
       pages={89\ndash 117},
}

\bib{Holkar2017Construction-of-Corr}{article}{
      author={Holkar, Rohit~Dilip},
       title={Topological construction of {$C^*$}-correspondences for groupoid
  {$C^*$}-algebras},
        date={2017},
     journal={Journal of {O}perator {T}heory},
      volume={77:1},
      number={23-24},
       pages={217\ndash 241},
}

\bib{Lance1995Hilbert-modules}{book}{
      author={Lance, E.~C.},
       title={Hilbert {$C^*$}-modules},
      series={London Mathematical Society Lecture Note Series},
   publisher={Cambridge University Press, Cambridge},
        date={1995},
      volume={210},
        ISBN={0-521-47910-X},
         url={http://dx.doi.org/10.1017/CBO9780511526206},
        note={A toolkit for operator algebraists},
      review={\MR{1325694 (96k:46100)}},
}

\bib{Leinster1998Bicategories}{article}{
      author={Leinster, Tom},
       title={Basic bicategories},
        date={1998},
     journal={arXiv preprint math.CT},
        ISSN={9810017/},
      volume={589},
}

\bib{Muhly-Tomforde-2005-Topological-quivers}{article}{
      author={Muhly, Paul~S.},
      author={Tomforde, Mark},
       title={Topological quivers},
        date={2005},
        ISSN={0129-167X},
     journal={Internat. J. Math.},
      volume={16},
      number={7},
       pages={693\ndash 755},
         url={http://dx.doi.org/10.1142/S0129167X05003077},
      review={\MR{2158956 (2006i:46099)}},
}

\bib{Muhly-Williams1995Gpd-Cohomolgy-and-Dixmier-Douady-class}{article}{
      author={Muhly, Paul~S},
      author={Williams, Dana~P},
       title={Groupoid cohomology and the dixmier-douady class},
        date={1995},
     journal={Proceedings of the London Mathematical Society},
      volume={71},
      number={1},
       pages={109\ndash 134},
}

\bib{Renault1985Representations-of-crossed-product-of-gpd-Cst-Alg}{article}{
      author={Renault, Jean},
       title={Repr\'esentation des produits crois\'es d'alg\`ebres de
  groupo\"\i des},
        date={1987},
        ISSN={0379-4024},
     journal={J. Operator Theory},
      volume={18},
      number={1},
       pages={67\ndash 97},
      review={\MR{912813 (89g:46108)}},
}

\bib{Sunder1996ImprimitivityNotes}{incollection}{
      author={Sunder, V.~S.},
       title={Notes on the imprimitivity theorem},
        date={1996},
   booktitle={Analysis, geometry and probability},
      series={Texts Read. Math.},
      volume={10},
   publisher={Hindustan Book Agency, Delhi},
       pages={299\ndash 321},
      review={\MR{1477702 (98k:22027)}},
}

\end{biblist}
\end{bibdiv}
\end{document}